\documentclass[a4paper,11pt]{article}

\setlength{\topmargin}{-15mm}
\setlength{\oddsidemargin}{0mm}
\setlength{\evensidemargin}{0mm}
\setlength{\textheight}{240mm}
\setlength{\textwidth}{160mm}


\usepackage{url}
\usepackage{color}
\usepackage{multirow}
\usepackage{hhline}
\usepackage{amsmath}
\usepackage{amsthm}
\usepackage{amsfonts}
\usepackage{amssymb}
\usepackage{times}
\usepackage{mathptm}
\usepackage{array}
\usepackage{enumerate}
\usepackage{graphics}
\usepackage{epsfig}
\usepackage{mathrsfs}


\newcommand{\subg}[2]{\langle #1\rangle_{#2}}

\newtheorem{thm}{Theorem}[section]
\newtheorem{lem}[thm]{Lemma}
\newtheorem{prop}[thm]{Proposition}
\newtheorem{cor}[thm]{Corollary}

\theoremstyle{definition}

\newtheorem*{df}{Definition}


\newcommand{\Spec}{{\rm Spec}}
\newcommand{\Ev}{{\rm Ev}}
\newcommand{\tr}{{\rm tr}}
\newcommand{\diam}{{\rm diam}}


\newcommand{\bR}{\ensuremath{\mathbb{R}}}

\newcommand{\bZ}{\ensuremath{\mathbb{Z}}}

\newcommand{\cQ}{\ensuremath{\mathcal{Q}}}

\title{The non-bipartite integral graphs with spectral radius three}

\author{
\begin{tabular}{c}
{\sc Taeyoung CHUNG} \\
[1ex]
{\small 
{\it E-mail address}: {\tt tae7837@postech.ac.kr}} \\
[1ex]
{\sc Jack KOOLEN} \\
[1ex]
{\small 
Department of Mathematics, 
POSTECH, 
Pohang 790-784, Korea} \\
{\small 
{\it E-mail address}: {\tt koolen@postech.ac.kr}} \\
[1ex]
{\sc Yoshio SANO} \\
[1ex]
{\small 
Pohang Mathematics Institute, 
POSTECH, 
Pohang 790-784, Korea} \\
{\small 
{\it E-mail address}: {\tt ysano@postech.ac.kr}} \\
[1ex]
{\sc Tetsuji TANIGUCHI} \\
[1ex]
{\small 
Matsue College of Technology, 
Shimane 690-8518, Japan} \\
{\small 
{\it E-mail address}: {\tt tetsuzit@matsue-ct.ac.jp}} \\
\end{tabular}
} 

\date{}

\begin{document}

\maketitle


\begin{center}
{\bf Dedicated to Professor Dragos Cvetkovi\'c 
on the occasion of his 70th birthday}
\end{center}

\begin{abstract}
In this paper, we classify the connected non-bipartite integral graphs 
with spectral radius three. \\

\noindent
{\it Keywords}: 
{Integral graph, Generalized line graph, Spectrum, 
Signless Laplace matrix} \\
\noindent
{\it 2010 Mathematics Subject Classification}: {05C50} 
\end{abstract}

\tableofcontents

\section{Introduction and Main Result}

Let $\Gamma$ be a (simple) graph with $n$ vertices. 
The {\it adjacency matrix} $A(\Gamma)$ of $\Gamma$
is the $n \times n$ matrix indexed by the vertices of $\Gamma$ 
such that $A(\Gamma)_{xy}=1$ when $x$ is adjacent to $y$ 
and $A(\Gamma)_{xy}=0$ otherwise. 
The {\it spectral radius} of $\Gamma$ is 
the largest eigenvalue of the adjacency matrix of $\Gamma$. 
An {\it integral graph} is a graph 
whose adjacency matrix has only integral eigenvalues. 

Integral graphs were introduced by F. Harary and A. J. Schwenk \cite{HS}.
F. C. Bussemaker and D. Cvetkovi\'{c} \cite{BC} and 
A. J. Schwenk \cite{Schw}
classified the cubic connected graphs with integral spectrum 
(up to isomorphism, there are exactly 13 such graphs, 
and 5 of them are non-bipartite), 
building on earlier work by D. Cvetkovi\'{c} \cite{cve}. 
S. Simi\'{c} and Z. Radosavljevi\'{c} \cite{SR}
classified the non-regular non-bipartite integral graphs with maximal 
degree exactly four and there are exactly 13 of them. 
For a survey on integral graphs, see \cite{BCRSS}.

In this paper, we classify the connected non-bipartite integral graphs with 
spectral radius three, 
extending the results of \cite{SR}. 
Our main result is as follows: 

\begin{thm}\label{thm:001}
Let $\Gamma$ be a connected non-bipartite integral graph with 
spectral radius three. 
Then, $\Gamma$ is isomorphic to one of the graphs 
in Figure~\ref{fig:GLG} and Figure~\ref{fig:EG}.
\end{thm}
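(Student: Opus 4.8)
The plan is to exploit the fact that the three hypotheses—integrality, spectral radius exactly $3$, and non-bipartiteness—conspire to squeeze the whole spectrum into the narrow window $[-2,3]$, so that the root-system structure theory for graphs with smallest eigenvalue at least $-2$ becomes available. First I would pin down the spectrum. Since $\Gamma$ is connected with spectral radius $3$, every eigenvalue $\lambda$ satisfies $|\lambda|\le 3$, and integrality places each eigenvalue in $\{-3,-2,-1,0,1,2,3\}$. A connected graph has $-\lambda_{\max}$ among its eigenvalues precisely when it is bipartite; as $\Gamma$ is non-bipartite, the value $-3$ is excluded, and hence $\lambda_{\min}(\Gamma)\ge -2$. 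This is the decisive reduction: $\Gamma$ is a connected graph with smallest eigenvalue at least $-2$ whose largest eigenvalue is exactly $3$.

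Next I would invoke the Cameron--Goethals--Seidel--Shult root-system characterization: a connected graph with smallest eigenvalue at least $-2$ is either a generalized line graph $L(G;a_1,\dots,a_n)$, or one of the finitely many \emph{exceptional} graphs representable only in the root system $E_8$ (in particular on at most $36$ vertices, being induced subgraphs of the maximal exceptional graphs). The argument then splits along these two cases, which correspond exactly to the two families appearing in Figure~\ref{fig:GLG} and Figure~\ref{fig:EG}.

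For the generalized line graph case, the idea is to transfer the spectral conditions to the underlying structure via a suitable incidence matrix $N$ (encoding both the edges of $G$ and the pendant ``petals'' recorded by the $a_i$). The identities $N^{T}N = A(\Gamma)+2I$ and $NN^{T}=Q(G)=D(G)+A(G)$ show that the nonzero part of the spectrum of $A(\Gamma)$ is governed by the signless Laplacian $Q(G)$, with $\lambda_{\max}(\Gamma)=\mu_{\max}(Q(G))-2$. Thus spectral radius $3$ becomes the condition $\mu_{\max}(Q(G))=5$, and integrality of $\Gamma$ becomes integrality of the signless Laplacian spectrum of $G$. Because $\mu_{\max}(Q(G))\ge \Delta(G)+1$, the maximum degree of $G$ is at most $4$, and combining this with the integrality and spectral-radius constraints bounds the size of $G$; one then enumerates the admissible $G$ together with the petal parameters $a_i$, discards the bipartite ones, and verifies that the spectral radius is exactly $3$ (rather than strictly smaller), recovering the graphs of Figure~\ref{fig:GLG}.

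For the exceptional case, the list of connected graphs with smallest eigenvalue at least $-2$ that are representable only in $E_8$ is finite and explicitly available, so I would run through it, compute each spectrum, and retain those that are integral, non-bipartite, and of spectral radius exactly $3$, giving Figure~\ref{fig:EG}. The main obstacle is the completeness and bookkeeping of these two enumerations: organizing the generalized line graph analysis so that the search over underlying graphs $G$ and parameters $a_i$ is provably finite and exhaustive, and efficiently sifting the sizeable list of exceptional graphs. Once the eigenvalues have been located in each case, confirming that the spectral radius equals $3$ and that bipartiteness fails is routine.
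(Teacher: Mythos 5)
Your opening reduction (non-bipartite $+$ integral $+$ spectral radius $3$ forces $\Ev(A(\Gamma))\subseteq\{-2,-1,0,1,2,3\}$) and the Cameron--Goethals--Seidel--Shult dichotomy are exactly the paper's skeleton, and transferring the problem to the signless Laplacian via $N^TN=A(\Gamma)+2I$ is also the paper's method --- though note that for a genuine generalized line graph the relevant matrix is $\cQ(H,f)=Q(H)+2\Delta_f$, and $NN^T$ carries an extra $2I_{|f|}$ block; your formula $NN^T=Q(G)$ is correct only when all petal parameters vanish. The genuine gap is in your finiteness claims. In the generalized line graph branch, the bound $\Delta(H)\le 4$ does \emph{not} by itself ``bound the size of $G$'': bounded degree is compatible with unbounded order, so ``one then enumerates the admissible $G$'' is not a provably finite or exhaustive step as stated. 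The paper's mechanism is different and essential: since $\Ev(\cQ(H,f))\subseteq\{0,1,2,3,4,5\}$, the matrix $\cQ$ has at most six distinct eigenvalues, whence $\diam(H)\le 5$ (Proposition \ref{prop:diam}); bounded degree together with bounded diameter bounds $|V(H)|$, and the trace identities $\tr(\cQ^i)=\sum_r r^i m_r$ for $i\le 3$ (Proposition \ref{prop:trace}, Corollary \ref{cor:tr-8}) tie the multiplicities to the counts $a_{(i,j)}$ of vertices of each degree/weight type, which is what drives the case analysis (split according to whether $0\in\Ev(\cQ)$, i.e.\ whether $H$ is bipartite with $f=0$, by Corollary \ref{0ev}). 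Some analogue of this diameter-plus-trace bookkeeping is needed before any enumeration can begin.

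In the exceptional branch your plan presumes that ``the list of connected graphs with smallest eigenvalue at least $-2$ representable only in $E_8$ is \ldots explicitly available,'' but what is actually tabulated is the list of $573$ exceptional graphs with least eigenvalue \emph{strictly greater} than $-2$ (on $6$, $7$, $8$ vertices) together with the maximal exceptional graphs; the full family of exceptional graphs with least eigenvalue exactly $-2$ is not available as a catalogue, and sifting all induced subgraphs of $36$-vertex maximal exceptional graphs is not a realistic ``run through.'' The paper instead (i) proves the a priori bound $|V(\Gamma)|\le 12$ (Lemma \ref{lm:005}) by combining $\sum_{i=-1}^{3}m_i\le 8$ --- because a star complement for $-2$ is an exceptional graph with least eigenvalue greater than $-2$, hence has $6$, $7$, or $8$ vertices --- with $m_{-2}\le 4$, obtained from the average-degree inequality $3\ge 2|E(\Gamma)|/|V(\Gamma)|$ and the trace identities (the triangle-free case being isolated separately as the Petersen graph, Lemma \ref{lm:003}); and (ii) reconstructs all candidates by the star complement technique of Theorem \ref{thm:002}, i.e.\ as cliques in the compatibility graph $G(\Gamma')$ over the $573$ small star complements. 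These two ingredients --- a vertex bound and an extension mechanism --- are precisely what convert your in-principle-finite check into an actual proof; your proposal correctly names the obstacle (``completeness and bookkeeping'') but supplies a mechanism for neither branch.
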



\begin{figure}[h]
\begin{center}
\begin{tabular}{ccccc}
\includegraphics[scale=0.5]
{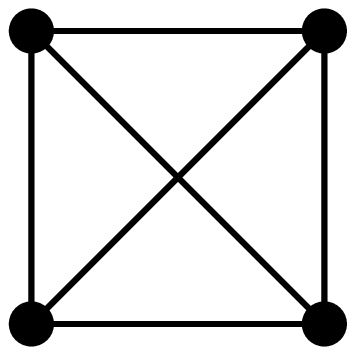} &
\includegraphics[scale=0.5]
{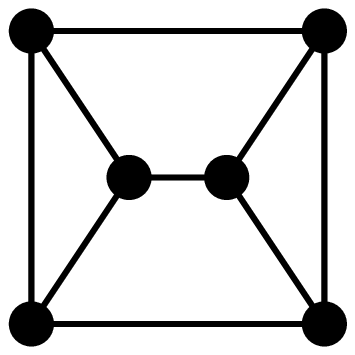} &
\includegraphics[scale=0.5]
{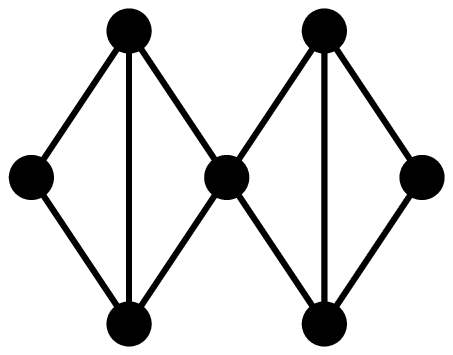} &
\includegraphics[scale=0.5]
{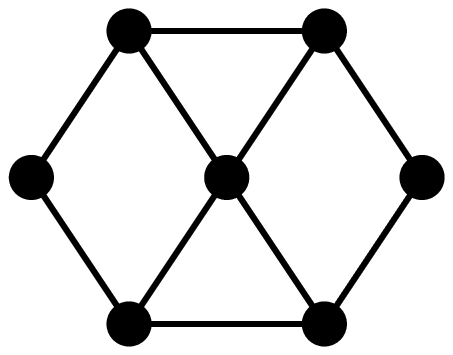} & 
\includegraphics[scale=0.5]
{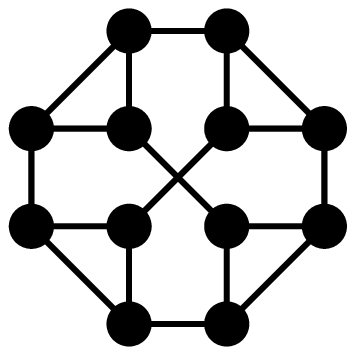} \\ 
LG4 &
LG6 &
LG7a &
LG7b &
LG12 \\
& & & & \\
\includegraphics[scale=0.5]
{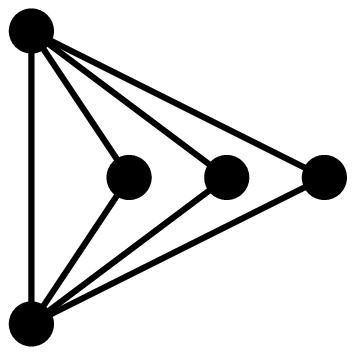} &
\includegraphics[scale=0.5]
{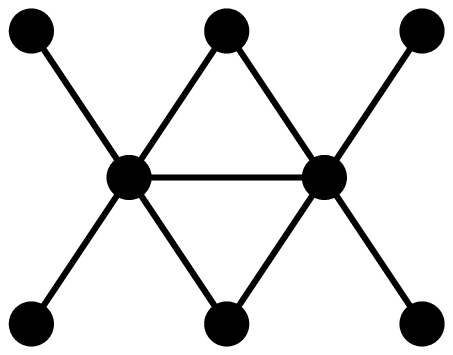} &
\includegraphics[scale=0.5]
{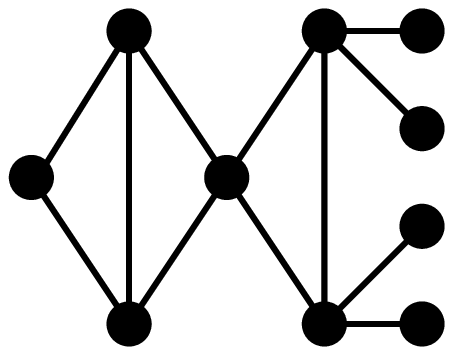} &
\includegraphics[scale=0.5]
{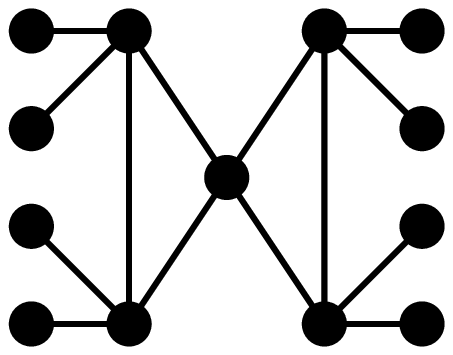} & 
 \\ 
GLG5 &
GLG8 &
GLG10 &
GLG13 & 
 \\
\end{tabular}
\end{center}
\caption{Integral generalized line graphs with spectral radius three}
\label{fig:GLG}
\end{figure}

\begin{figure}[h]
\begin{center}
\begin{tabular}{ccccc}
\includegraphics[scale=0.5]
{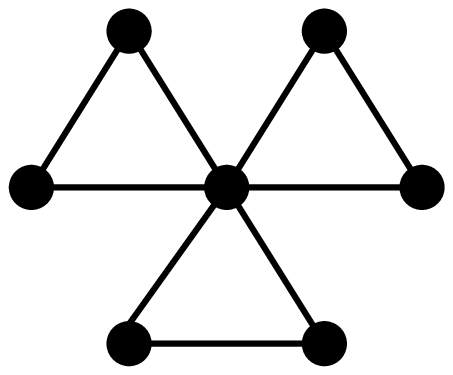} &
\includegraphics[scale=0.5]
{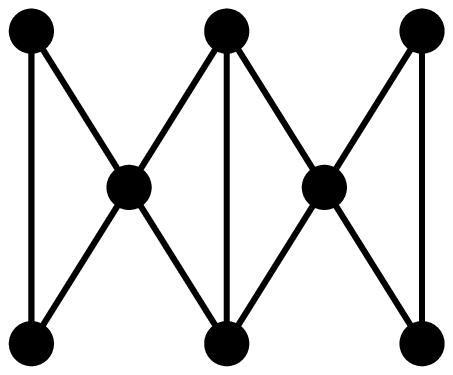} &
\includegraphics[scale=0.5]
{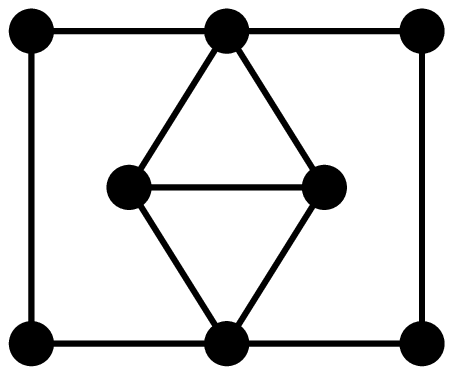} &
\includegraphics[scale=0.5]
{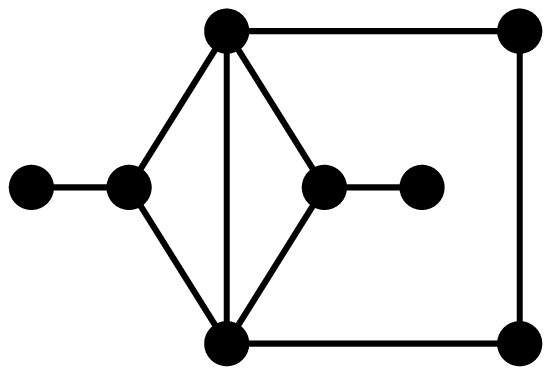} &
\includegraphics[scale=0.5]
{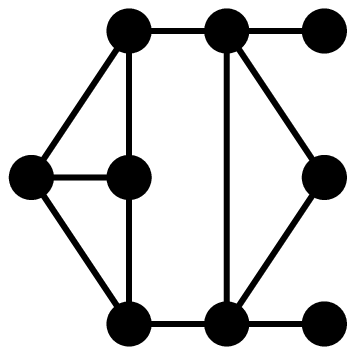} \\
EG7 &
EG8a &
EG8b &
EG8c &
EG9 \\
& & & & \\
\includegraphics[scale=0.5]
{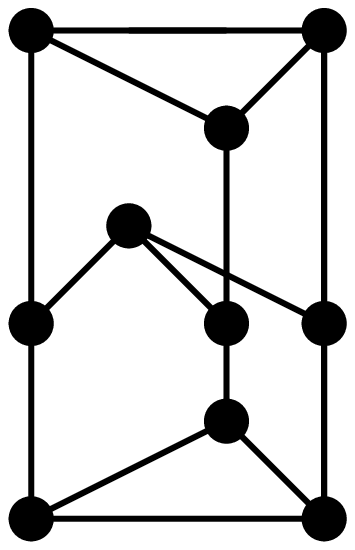} &
\includegraphics[scale=0.5]
{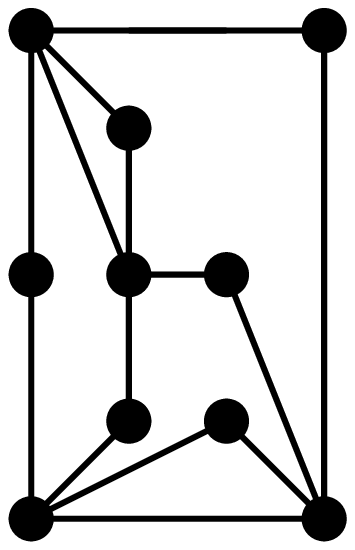} &
\includegraphics[scale=0.5]
{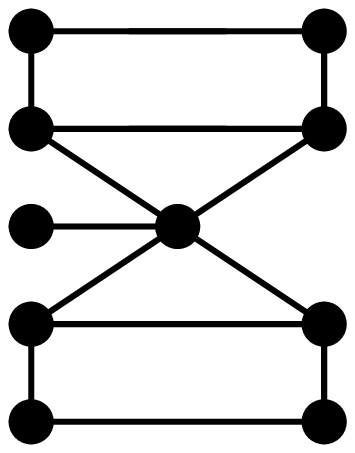} & 
\includegraphics[scale=0.5]
{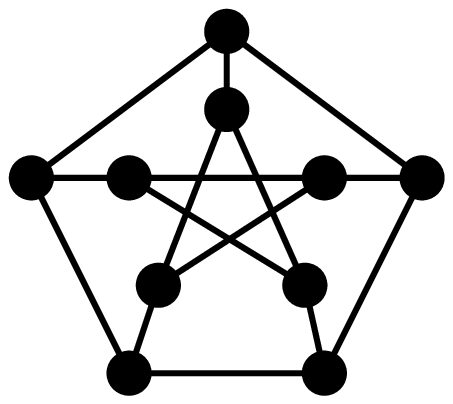} &
\includegraphics[scale=0.5]
{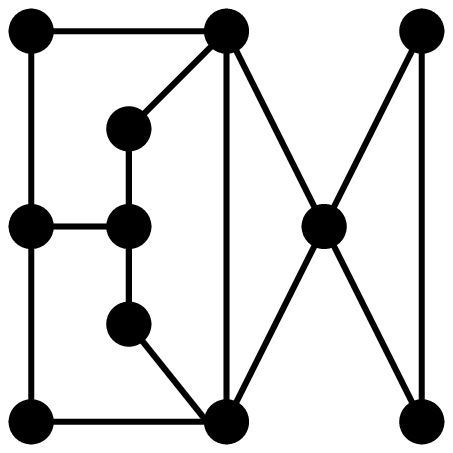} \\
EG10a &
EG10b &
EG10c &
EG10d &
EG11a \\
 & & & & \\
\includegraphics[scale=0.5]
{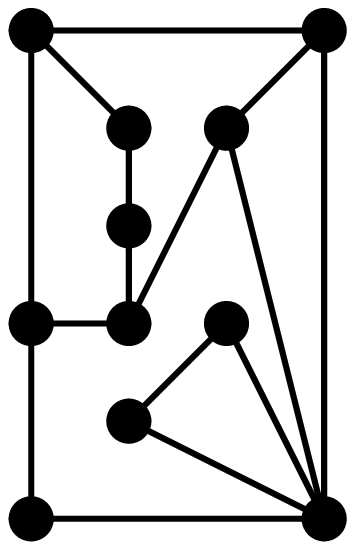} &
\includegraphics[scale=0.5]
{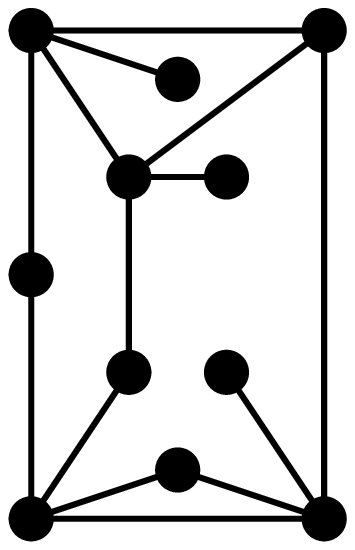} &
\includegraphics[scale=0.5]
{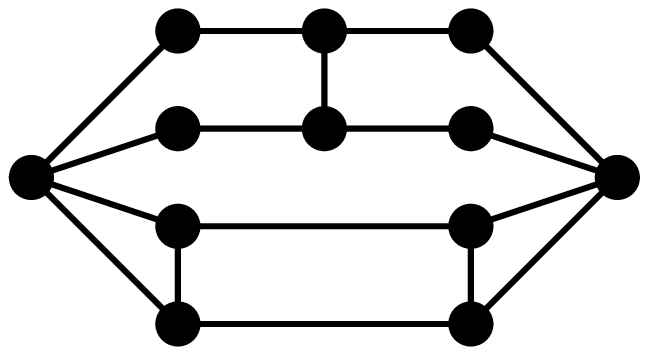} &
 & 
\\
EG11b &
EG11c &
EG12& 
 & 
\\
\end{tabular}
\end{center}
\caption{Integral exceptional graphs with spectral radius three}
\label{fig:EG}
\end{figure}

\begin{table}[h]
\begin{center}
\begin{tabular}{|l|cccccc || l|cccccc|}
\hline
Graph & 3 & 2 & 1 & 0 & -1 & -2 &
Graph & 3 & 2 & 1 & 0 & -1 & -2 \\
\hline
\hline

LG4   & 1 & 0 & 0 & 0 & 3 & 0 &
EG7   & 1 & 0 & 2 & 0 & 3 & 1 \\

LG6   & 1 & 0 & 1 & 2 & 0 & 2 &
EG8a  & 1 & 1 & 1 & 0 & 4 & 1 \\

LG7a   & 1 & 1 & 0 & 1 & 3 & 1 &
EG8b  & 1 & 0 & 3 & 0 & 2 & 2 \\

LG7b   & 1 & 0 & 2 & 1 & 1 & 2 &
EG8c  & 1 & 0 & 2 & 2 & 1 & 2 \\

LG12   & 1 & 3 & 0 & 2 & 3 & 3 &
EG9   & 1 & 1 & 1 & 2 & 2 & 2 \\

GLG5   & 1 & 0 & 0 & 2 & 1 & 1 &
EG10a & 1 & 1 & 3 & 0 & 2 & 3 \\

GLG8   & 1 & 0 & 1 & 4 & 0 & 2 &
EG10b & 1 & 1 & 2 & 2 & 1 & 3 \\

GLG10   & 1 & 1 & 1 & 3 & 2 & 2 &
EG10c & 1 & 1 & 1 & 4 & 0 & 3 \\

GLG13   & 1 & 1 & 2 & 5 & 1 & 3 &
EG10d & 1 & 0 & 5 & 0 & 0 & 4 \\

   &  &  &  &  &  &  &
EG11a & 1 & 1 & 3 & 1 & 2 & 3 \\

   &  &  &  &  &  &  &
EG11b & 1 & 1 & 3 & 1 & 2 & 3 \\

   &  &  &  &  &  &  &
EG11c & 1 & 1 & 2 & 3 & 1 & 3 \\

   &  &  &  &  &  &  &
EG12  & 1 & 2 & 1 & 4 & 0 & 4 \\

\hline
\end{tabular}
\end{center}
\caption{The multiplicities of eigenvalues of graphs}
\label{t:001}
\end{table}

\section{Preliminaries}

In this section, we prepare some notations and terminologies 
which we use in this paper, 
and recall some results on eigenvalues of graphs. 

\subsection{Eigenvalues of graphs}

Let $\Gamma$ be a connected graph where $V(\Gamma)$ is 
the vertex set of $\Gamma$ 
and $E(\Gamma)$ is the edge set of $\Gamma$.
The {\it degree} 
$\deg_{\Gamma}(x)$ 
of a vertex $x$ in $\Gamma$ 
is the number of vertices adjacent to $x$.
Let $d_{\Gamma}(x,y)$ denote the {\it distance} 
between two vertices $x$ and $y$ in $\Gamma$.
The {\it diameter} $\diam(\Gamma)$ of $\Gamma$ 
is the maximum distance between two distinct vertices. 
The {\it degree matrix} $\Delta(\Gamma)$ of $\Gamma$ is 
the diagonal matrix with $\Delta(\Gamma)_{xx}=\deg_{\Gamma}(x)$ 
for any $x \in V(\Gamma)$. 
The {\it Laplace matrix} $L(\Gamma)$ of $\Gamma$ is the matrix 
$\Delta(\Gamma) - A(\Gamma)$. 
The {\it signless Laplace matrix} $Q(\Gamma)$ of $\Gamma$ 
is the matrix $\Delta(\Gamma) + A(\Gamma)$.
Let 
$\Ev(M)$ denote the set of eigenvalues of a matrix $M$. 
Note that if $M$ is a real symmetric matrix, then 
$\Ev(M) \subseteq \mathbb{R}$. 
The {\it spectrum} $\Spec(M)$ of $M$ is 
the multiset of eigenvalues together with their multiplicities.

Before we introduce the Perron-Frobenius Theorem, we need some definitions. 
A real $n \times n$ matrix $M$ with nonnegative entries 
is called {\it irreducible} 
if, for all $i, j$, there exists a positive integer $k$ 
such that $(M^k)_{ij} > 0$. 
For two real $n \times n$ matrices $M$ and $N$, 
we write $N \leq M$ if $N_{ij} \leq M_{ij}$ for all $1 \leq i,j \leq n$. 
We denote the zero matrix by $O$. 

\begin{thm}[{Perron-Frobenius Theorem, cf. \cite[Theorem 8.8.1]{god}}]
\label{perfro}
Let $M$ be an irreducible nonnegative real matrix and 
let $\rho(M) := \max\{ | \theta| \mid \theta \in \Ev(M)\}$. 
Then $\rho(M)$ is an eigenvalue of $M$ 
with algebraic and geometric multiplicity one. 
Moreover, any eigenvector for $\rho(M)$ has either no nonnegative entries 
or no nonpositive entries. 

Let $N$ be a matrix such that $O \leq N \leq M$ 
(in particular $N$ is a principal minor of $M$), 
and $\sigma \in \Ev(N)$. 
Then $- \rho(M) \leq |\sigma| \leq \rho(M)$. 
If $|\sigma| =\rho(M)$, then $N=M$.
\end{thm}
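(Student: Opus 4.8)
The plan is to establish the four assertions in turn, relying on two auxiliary facts about an irreducible nonnegative matrix $M$: that some power of $I+M$ is entrywise positive, and that $M$ admits a strictly positive eigenvector for its spectral radius. First I would prove the positivity lemma: since $M$ is irreducible, for every pair $(i,j)$ some power satisfies $(M^k)_{ij}>0$, and a short combinatorial argument (bounding the relevant path length by $n-1$) shows $((I+M)^{n-1})_{ij}>0$ for all $i,j$, so $(I+M)^{n-1}$ is entrywise positive. To produce an eigenvector for $\rho(M)$ I would apply the Brouwer fixed-point theorem to the continuous map $x\mapsto Mx/\|Mx\|_1$ on the standard simplex (the denominator never vanishes there, since irreducibility forbids a zero row), obtaining a nonnegative eigenvector $v$ with eigenvalue $\lambda\geq 0$. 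Applying $(I+M)^{n-1}$ gives $(1+\lambda)^{n-1}v=(I+M)^{n-1}v>0$, forcing $v>0$. Applying the same construction to the transpose $M^{\top}$ (also irreducible) yields a positive left eigenvector $y>0$ with $M^{\top}y=\mu y$; pairing, $\lambda\,y^{\top}v=y^{\top}Mv=\mu\,y^{\top}v$ with $y^{\top}v>0$ forces $\lambda=\mu$, and the comparison inequality of the last paragraph (with $N=M$) then gives $|\sigma|\leq\lambda$ for every $\sigma\in\Ev(M)$, so $\lambda=\rho(M)$.

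For the multiplicity statements I would first settle the geometric multiplicity. Any real eigenvector $w$ for $\rho(M)$ can be combined with $v$ to form $v-tw$ with $t$ chosen so that $v-tw\geq 0$ with at least one zero coordinate; since the positivity lemma forces every nonzero nonnegative eigenvector for $\rho(M)$ to be strictly positive, we must have $v-tw=0$, so $w$ is a scalar multiple of $v$. This gives geometric multiplicity one and simultaneously the sign-definiteness claim: a real eigenvector for $\rho(M)$ is a scalar multiple of $v$, hence has all entries positive or all entries negative. For algebraic multiplicity one I would argue by contradiction using $y$: a Jordan block of size at least two would produce a vector $z$ with $Mz=\rho(M)z+v$; left-multiplying by $y^{\top}$ and using $y^{\top}M=\rho(M)y^{\top}$ yields $y^{\top}v=0$, contradicting $y>0$ and $v>0$.

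The final comparison part I would handle through the triangle inequality for the modulus. Given $O\leq N\leq M$ and $Nz=\sigma z$ with $z\neq 0$, writing $|z|$ for the entrywise modulus gives $|\sigma|\,|z|=|Nz|\leq N|z|\leq M|z|$ componentwise. Pairing with the positive left eigenvector $y$ of $M$ yields $|\sigma|\,y^{\top}|z|\leq y^{\top}M|z|=\rho(M)\,y^{\top}|z|$, and since $y^{\top}|z|>0$ we conclude $|\sigma|\leq\rho(M)$; the bound $-\rho(M)\leq|\sigma|$ is immediate from $|\sigma|\geq 0$. In the equality case $|\sigma|=\rho(M)$, this pairing forces equality throughout the chain, so $M|z|=\rho(M)|z|$; by the positivity lemma $|z|>0$, whence $(M-N)|z|=0$ with $M-N\geq 0$ and $|z|>0$ forces $M=N$.

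I expect the algebraic-multiplicity step to be the main obstacle, since it is the only place requiring the left eigenvector together with the structure of generalized eigenvectors; the positivity lemma and the fixed-point existence argument, although essential, are routine once the path-counting bound on $(I+M)^{n-1}$ is in place.
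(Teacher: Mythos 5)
The paper does not prove this statement at all: it is quoted as a known classical result, with a pointer to Godsil and Royle, and every later use of it in the paper treats it as a black box. So there is no proof of record to compare against, and the relevant question is simply whether your self-contained argument is sound --- and it is. Your route is the standard compactness proof: $(I+M)^{n-1}>0$ from irreducibility, a Brouwer fixed point of $x\mapsto Mx/\|Mx\|_1$ on the simplex to produce a nonnegative eigenvector, bootstrapping to strict positivity, the left eigenvector of $M^{T}$ to identify the eigenvalue with $\rho(M)$ and to run the comparison $|\sigma|\,y^{T}|z|\le y^{T}N|z|\le y^{T}M|z|=\rho(M)\,y^{T}|z|$, and the Jordan-chain pairing $y^{T}v=0$ to kill algebraic multiplicity greater than one. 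All four assertions of the theorem, including the rigidity $|\sigma|=\rho(M)\Rightarrow N=M$ (via $M|z|=\rho(M)|z|$, hence $|z|>0$, hence $(M-N)|z|=0$ forces $M=N$ entrywise), are covered, and the apparent circularity in invoking the comparison inequality before knowing $\lambda=\rho(M)$ is harmless, since that inequality only needs a strictly positive left eigenvector for $\lambda$. Two small nits: the denominator $\|Mx\|_1$ is nonvanishing on the simplex because irreducibility forbids a zero \emph{column} (take $x=\mathbf{e}_j$, so $Mx$ is the $j$-th column), not a zero row as you wrote --- both are forbidden, so nothing breaks; and in the geometric-multiplicity step you should say explicitly that $w$ may be replaced by $-w$ to ensure it has a positive entry before choosing $t=\min\{v_i/w_i : w_i>0\}$. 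Neither affects correctness.
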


We call $\rho(M)$ defined in the above theorem 
the {\it spectral radius} of $M$. 
(If $M = A(\Gamma)$, then $\rho(M)$ is also called 
the spectral radius of $\Gamma$.)

Let $m \geq n$ be two positive integers. 
Let $M$ be an $m \times m$ matrix 
and let $N$ be an $n \times n$ submatrix of $M$ 
such that $\Ev(M) \subseteq \bR$, $\Ev(N) \subseteq \bR$, and 
both $M$ and $N$ are diagonalizable. 
We say that 
the eigenvalues of $N$ {\it interlace} the eigenvalues of $M$ 
if 
\[
\theta_i(M) \geq \theta_i(N) \geq \theta_{m-n+i}(M)
\]
holds for $i=1, \ldots, n$, where $M$ has eigenvalues 
$\theta_1(M) \geq \theta_2(M) \geq \cdots \geq \theta_m(M)$ 
and $N$ has eigenvalues 
$\theta_1(N) \geq \theta_2(N) \geq \cdots \geq \theta_n(N)$.
We say the interlacing is {\em tight} 
if there exists $\ell \in \{0,1, \ldots, n\}$ 
such that $\theta_i(N)= \theta_i(M)$ 
for $1 \leq i \leq \ell$, 
and $\theta_i(N)= \theta_{m-n+i}(M)$ for $\ell < i \leq n$.

\begin{thm}[{Interlacing Theorem, cf. \cite[Theorem 9.1.1]{god}}]\label{0}
Let $M$ be a square matrix, 
which is similar to a real symmetric matrix, 
and let $N$ be a principal submatrix of $M$. 
Then the eigenvalues of $N$ interlace the eigenvalues of $M$. 
\end{thm}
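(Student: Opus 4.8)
The plan is to recognize this as the classical Cauchy interlacing theorem and to prove it through the Courant--Fischer min-max characterization of eigenvalues. First I would reduce to the case in which $M$ itself is real symmetric: since the interlacing relation depends only on the eigenvalue sequences, and in the applications $M$ is realized by a symmetric matrix carrying its spectrum, I may write the $n \times n$ principal submatrix as $N = R^{\top} M R$, where $R$ is the $m \times n$ matrix consisting of those columns of the identity $I_m$ indexed by the rows and columns retained in $N$. Then $R^{\top} R = I_n$, the matrix $N$ is again real symmetric, and its column space $W := \mathrm{col}(R) \subseteq \mathbb{R}^m$ is an $n$-dimensional coordinate subspace. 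The map $y \mapsto Ry$ is an isometry from $\mathbb{R}^n$ onto $W$ satisfying $(Ry)^{\top} M (Ry) = y^{\top} N y$ and $(Ry)^{\top}(Ry) = y^{\top} y$, so the Rayleigh quotient of $N$ at $y$ equals that of $M$ at $Ry$.

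Next I would invoke Courant--Fischer: writing $\mathcal{R}_M(x) = \frac{x^{\top} M x}{x^{\top} x}$, one has $\theta_i(M) = \max_{\dim U = i} \min_{0 \neq x \in U} \mathcal{R}_M(x) = \min_{\dim U = m-i+1} \max_{0 \neq x \in U} \mathcal{R}_M(x)$, together with the analogous identities for $N$ over subspaces of $\mathbb{R}^n$. For the upper bound I would use the max-min form: since every $i$-dimensional subspace of $\mathbb{R}^n$ corresponds under $R$ to an $i$-dimensional subspace of $W \subseteq \mathbb{R}^m$, and the maximum for $N$ ranges only over those subspaces lying inside $W$, it can only be smaller than the maximum for $M$ taken over all $i$-dimensional subspaces; hence $\theta_i(N) \leq \theta_i(M)$. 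For the lower bound I would use the min-max form symmetrically: the minimum for $N$ ranges over $(n-i+1)$-dimensional subspaces of $W$, a subfamily of the $(n-i+1)$-dimensional subspaces of $\mathbb{R}^m$, so it can only be larger; matching dimensions via $m-j+1 = n-i+1$ gives $j = m-n+i$ and therefore $\theta_i(N) \geq \theta_{m-n+i}(M)$. Combining the two yields $\theta_i(M) \geq \theta_i(N) \geq \theta_{m-n+i}(M)$ for $i = 1, \dots, n$, which is precisely the asserted interlacing.

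The step requiring the most care is the reduction hidden in the hypothesis that $M$ is similar to a real symmetric matrix. A general similarity does not preserve principal submatrices, so I cannot simply replace $M$ by a symmetric conjugate and read off $N$; instead I must argue that in the relevant setting $M$ and $N$ are genuinely realizable as $S$ and $R^{\top} S R$ for a common symmetric $S$ with $R^{\top}R = I_n$, which is exactly what makes both eigenvalue sequences real and the Rayleigh-quotient comparison legitimate. The remaining difficulty is purely bookkeeping: keeping the two families of test subspaces and the index shift $i \mapsto m-n+i$ aligned between the max-min and min-max halves, so that the two one-sided comparisons assemble into the full two-sided interlacing rather than a weaker one-sided estimate.
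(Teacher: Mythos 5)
The paper does not prove this theorem at all: it is imported as a known result with a citation to Godsil and Royle, so there is no internal proof to compare against. Your Courant--Fischer argument is the standard proof of Cauchy interlacing, and for real symmetric $M$ it is correct and complete: the realization $N = R^{\top} M R$ with $R^{\top}R = I_n$, the Rayleigh-quotient comparison between subspaces of the coordinate subspace $W$ and arbitrary subspaces of $\mathbb{R}^m$, and the index match $m-j+1 = n-i+1$ giving $\theta_i(N) \geq \theta_{m-n+i}(M)$ are all exactly as they should be.

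However, the step you defer --- the reduction from ``$M$ similar to a real symmetric matrix'' --- is not mere bookkeeping, and your suspicion is justified: as literally stated, the theorem is false. For instance, $M=\left(\begin{smallmatrix}5 & 1\\ -8 & -1\end{smallmatrix}\right)$ has eigenvalues $1$ and $3$, hence is similar to the real symmetric matrix $\mathrm{diag}(1,3)$, yet its $1\times 1$ principal submatrix $(5)$ has eigenvalue $5>3$, so interlacing fails. What saves the statement in practice is that the similarity can be taken compatible with the index set: the typical case is a diagonal similarity $M=D^{-1}SD$ with $S$ real symmetric and $D$ diagonal invertible, under which principal submatrices correspond, $M|_X = D_X^{-1}\,(S|_X)\,D_X$, so $\Ev(M|_X)=\Ev(S|_X)$ and your symmetric-case argument applied to $S$ finishes the proof. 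In this paper the theorem is only ever invoked for genuinely symmetric matrices ($A(\Gamma)$, $Q(H)$, $\cQ(H,f)$ and their principal submatrices), while quotient matrices --- the one family here that is merely similar to symmetric matrices --- are handled by the separate Proposition~\ref{qint}, whose proof is different (it is not a principal-submatrix statement). So your proof covers every actual use of Theorem~\ref{0}, but to prove the statement as phrased you would need to add the compatibility hypothesis (or the diagonal-similarity remark) explicitly, rather than leaving it as ``in the relevant setting.''
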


Let $M$ be a matrix indexed by the vertex set of a graph $\Gamma$ 
and let $\Gamma'$ be an induced subgraph of $\Gamma$. 
We denote by $M|_{\Gamma'}$ 
the principal submatrix of $M$ obtained by restricting the index set 
$V(\Gamma)$ to $V(\Gamma')$. 
A consequence of Perron-Frobenius Theorem is:

\begin{cor}
Let $M$ be a matrix indexed by the vertex set of a graph $\Gamma$ 
and let $\Gamma'$ be a proper induced subgraph of $\Gamma$. 
Then $\theta_{\max}(M) > \theta_{\max}(M|_{\Gamma'})$, 
and $\theta_{\min}(M) \leq \theta_{\min}(M|_{\Gamma'})$. 
where $\theta_{\max}(M)$ and $\theta_{\min}(M)$ denote 
the largest and smallest eigenvalues of $M$, respectively. 
\end{cor}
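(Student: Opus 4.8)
The plan is to treat the two inequalities separately, deriving the bound on $\theta_{\min}$ from pure interlacing and the (strict) bound on $\theta_{\max}$ from the equality case of the Perron--Frobenius Theorem. Throughout I read the hypothesis in the setting that makes the statement a genuine consequence of Theorem~\ref{perfro}, namely that $M$ is a nonnegative irreducible matrix similar to a real symmetric one (e.g.\ the adjacency matrix of the connected graph $\Gamma$, or its signless Laplace matrix), and I write $N := M|_{\Gamma'}$, an $n \times n$ principal submatrix of the $m \times m$ matrix $M$ with $n < m$ since $\Gamma'$ is proper.

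For the lower bound on $\theta_{\min}$ I would simply invoke the Interlacing Theorem (Theorem~\ref{0}): since $N$ is a principal submatrix of $M$, its eigenvalues interlace those of $M$, so taking $i = n$ in the displayed interlacing inequality gives $\theta_{\min}(N) = \theta_n(N) \ge \theta_m(M) = \theta_{\min}(M)$. This is exactly $\theta_{\min}(M) \le \theta_{\min}(M|_{\Gamma'})$, and no positivity is needed here; the non-strictness is unavoidable, which is why the statement does not claim a strict inequality on this side.

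For the strict upper bound I would first note that interlacing already yields the non-strict inequality $\theta_{\max}(M) = \theta_1(M) \ge \theta_1(N) = \theta_{\max}(N)$, so it only remains to exclude equality. Here I use that for a symmetric nonnegative matrix the largest eigenvalue coincides with the spectral radius, so that $\theta_{\max}(M) = \rho(M)$ and $\sigma := \theta_{\max}(N) = \rho(N) \ge 0$. Viewing $N$ as extended by zeros to the full index set $V(\Gamma)$, one has $O \le N \le M$ with $N \ne M$, the latter because $\Gamma'$ omits at least one vertex $v$ and the irreducibility of $M$ forces a nonzero entry of $M$ in the row or column of $v$, where $N$ vanishes. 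Now $\sigma \in \Ev(N)$ with $|\sigma| = \sigma$, so the second part of Theorem~\ref{perfro} gives $\sigma \le \rho(M)$ and, crucially, that $\sigma = \rho(M)$ would force $N = M$, a contradiction. Hence $\theta_{\max}(M|_{\Gamma'}) = \sigma < \rho(M) = \theta_{\max}(M)$.

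The only genuinely delicate point is this last strictness: interlacing alone is blind to it, and the argument rests on the equality clause of Perron--Frobenius together with the identification $\theta_{\max} = \rho$ for nonnegative symmetric matrices (which guarantees $|\sigma| = \sigma$, so that the eigenvalue attaining $\rho(M)$ is the Perron root rather than $-\rho(M)$). I would flag that this is precisely where the connectedness of $\Gamma$, hence the irreducibility of $M$, and the properness of $\Gamma'$ both enter: dropping either hypothesis makes the inequality fail to be strict.
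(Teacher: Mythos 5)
Your proof is correct and follows exactly the route the paper intends: the corollary is stated there without proof as ``a consequence of Perron--Frobenius Theorem,'' and your argument---interlacing (Theorem~\ref{0}) for $\theta_{\min}(M) \leq \theta_{\min}(M|_{\Gamma'})$, plus the equality clause of Theorem~\ref{perfro} applied to $O \leq N \leq M$, $N \neq M$, to force strictness in $\theta_{\max}(M) > \theta_{\max}(M|_{\Gamma'})$---is precisely the expected filling-in of that claim. Your observation that connectedness of $\Gamma$ (irreducibility of $M$) and properness of $\Gamma'$ are the hypotheses that make the strict inequality work is also consistent with the paper's standing assumptions.
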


Let $\pi = \{C_1, C_2, \cdots, C_t\}$ be a partition of 
the vertex set of a graph $\Gamma$. 
The {\it characteristic matrix} of $\pi$ 
is the $|V(\Gamma)| \times |\pi|$ matrix $P$ 
with the characteristic vectors of the elements of $\pi$ as its columns, 
i.e., $P_{xi}=1$ if $x \in C_i$ and $P_{xi}=0$ otherwise. 
If $P$ is the characteristic matrix of $\pi$, 
then $P^TP$ is a diagonal matrix where $(P^TP)_{ii} = |C_i|$. 
Since the parts of $\pi$ are not empty, the matrix $P^TP$ is invertible. 
Let $M$ be a matrix indexed by the vertex set of $\Gamma$. 
The {\it quotient matrix} $B_{{M},\pi}$ of $M$ with respect to $\pi$ 
is defined by $B_{{M},\pi} := (P^TP)^{-1} P^T {M} P$. 
A partition $\pi$ is called {\it $M$-equitable} 
if, for any $1 \leq i, j \leq t$ and any $x \in C_i$, 
$(MP)_{xj} = (B_{{M},\pi})_{ij}$.

\subsection{Generalized line graphs and generalized signless Laplace matrices}

The {\it line graph} $\mathscr{L}(H)$ of a graph $H$ 
is the graph whose vertex set is the edge set of $H$ 
and where two distinct edges of $H$ are adjacent in $\mathscr{L}(H)$ 
if and only if they are incident in $H$.

Now, we recall the definition of generalized line graphs 
which were introduced by Hoffman \cite{Hoff} 
(cf. \cite[Definition 1.1.6]{new}).
A {\it vertex-weighted graph} $(H, f)$ 
is a pair of a graph $H$ and a function 
$f:V(H) \to \mathbb{Z}_{\geq 0}$. 
For $n \in \mathbb{Z}_{> 0}$, 
the {\it cocktail party graph} $CP(n)$ 
is the complete $n$-partite graph $K_{n \times 2}$ 
each of whose patite sets has the size two. 
We let $CP(0)=(\emptyset, \emptyset)$ for convention. 

\begin{df}[\cite{Hoff}]
{\rm 
Let $(H, f)$ be a vertex-weighted graph 
where $f:V(H) \to \mathbb{Z}_{\geq 0}$. 
The {\it generalized line graph} $\mathscr{L}(H, f)$ 
of $(H, f)$ is the graph obtained from 
$\mathscr{L}(H) \cup \bigcup_{x \in V(H)} CP(f(x))$ 
by adding edges between any vertices in $CP(f(x))$ 
and $e \in V(\mathscr{L}(H))$ such that 
$x \in e$ in $H$. 
A graph $\Gamma$ is called a {\it generalized line graph} 
if there exists a vertex-weighted graph $(H, f)$ 
such that $\Gamma \cong \mathscr{L}(H,f)$. 
}
\end{df}

In 1976, Cameron, Goethals, Seidel, and Shult \cite{root} showed 
the following theorem: 

\begin{thm}\label{thm:Cameron}
Let $\Gamma$ be a connected graph with smallest eigenvalue at least $-2$. 
Then, $\Gamma$ is a generalized line graph or $\Gamma$ is a graph 
with at most $36$ vertices. 
\end{thm}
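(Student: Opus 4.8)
The plan is to convert the spectral hypothesis into a statement about lattices of norm-$2$ vectors and then to appeal to the classification of root lattices. First I would observe that the condition $\theta_{\min}(A(\Gamma)) \geq -2$ is equivalent to the matrix $M := A(\Gamma) + 2I$ being positive semidefinite. Writing $M$ as a Gram matrix $M = N^{T}N$ produces vectors $v_{x} \in \bR^{d}$, one for each vertex $x$, with $\langle v_{x}, v_{x}\rangle = 2$, with $\langle v_{x}, v_{y}\rangle = 1$ when $x$ is adjacent to $y$, and with $\langle v_{x}, v_{y}\rangle = 0$ otherwise. Thus every $v_{x}$ has squared length $2$, all pairwise inner products lie in $\{0,1\}$, and (since $2 \notin \{0,1\}$) distinct vertices yield distinct vectors, so $|V(\Gamma)|$ equals the number of representing vectors.

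Next I would study the lattice $L$ generated by $\{v_{x}\}$. All inner products are integers, so $L$ is integral, and a direct expansion shows every vector of $L$ has even squared length; hence $L$ is an even lattice generated by vectors of minimal norm $2$, i.e.\ a root lattice, whose set of norm-$2$ vectors forms a root system $R$ (it is closed under the reflections $s_{\alpha}(\beta)=\beta-\langle\beta,\alpha\rangle\alpha$). Because $\Gamma$ is connected and an edge corresponds to a nonzero inner product, the non-orthogonality graph of $\{v_{x}\}$ is connected, which forces $L$ to be indecomposable. By the classical classification of indecomposable root lattices, $R$ is then of type $A_{n}$, $D_{n}$, $E_{6}$, $E_{7}$, or $E_{8}$.

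I would then split according to this type. If $R$ is of type $A_{n}$ or $D_{n}$, whose roots may be taken as $e_{i}-e_{j}$ and $\pm e_{i}\pm e_{j}$ respectively, I would reconstruct from the configuration $\{v_{x}\}$ a vertex-weighted graph $(H,f)$ and verify $\Gamma \cong \mathscr{L}(H,f)$: after normalizing signs, vectors of the form $e_{i}+e_{j}$ encode the edges of $H$ (the line-graph part), while pairs $e_{i}\pm a$ through an auxiliary coordinate $a$ encode the cocktail-party parts $CP(f(x))$. Hence in the classical types $\Gamma$ is a generalized line graph. If instead $R$ is of exceptional type, then $R \subseteq E_{8}$, which lies in $\bR^{8}$ and has only $240$ roots; the $v_{x}$ then form a set of roots of $E_{8}$ with pairwise inner products in $\{0,1\}$, and a structural analysis inside $E_{8}$ shows that any such set has at most $36$ elements, giving $|V(\Gamma)| \leq 36$.

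The main obstacle is the case analysis in the two classical types together with the sharp exceptional bound. Showing that \emph{every} norm-$2$ configuration with inner products in $\{0,1\}$ inside $A_{n}$ or $D_{n}$ genuinely arises as some $\mathscr{L}(H,f)$ requires a careful sign-normalization (switching) argument to eliminate the inner-product value $-1$ and to read off $H$ and $f$ unambiguously. The delicate quantitative point is the constant $36$: it rests on the fine structure of the $E_{8}$ root system, namely on identifying the largest set of its roots that is pairwise at angle $60^{\circ}$ or $90^{\circ}$, and this is where I expect the heaviest computation to lie.
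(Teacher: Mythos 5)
Your argument is correct and is exactly the classical proof of Cameron--Goethals--Seidel--Shult: the paper itself does not prove Theorem~\ref{thm:Cameron} but quotes it from \cite{root}, and your route (positive semidefiniteness of $A(\Gamma)+2I$, Gram vectors of norm $2$ generating an indecomposable even root lattice, the $A_n/D_n$ versus $E_6/E_7/E_8$ dichotomy, switching-normalization in the classical types to recover $(H,f)$, and the $36$-root bound in $E_8$) is the one taken in that reference. One small correction: among the vectors $v_x$ themselves no inner product $-1$ occurs (the Gram entries are already in $\{0,1\}$); the switching argument is needed not to eliminate $-1$'s but to put the roots into the standard form $e_i+e_j$, $e_i\pm f_{x,k}$ from which $H$ and $f$ can be read off.
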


\noindent
A connected graph with smallest eigenvalue at least $-2$ 
is called {\it exceptional} if it is not a generalized line graph.

For a function $f:V \to \mathbb{Z}_{\geq 0}$, 
we denote the sum 
$\sum_{x \in V} f(x)$ by $|f|$. 
We denote the function $f:V \to \mathbb{Z}_{\geq 0}$ 
such that $f(x)=0$ for all $x \in V$ 
simply by $0$. 

Now, we introduce the generalized signless Laplace matrix 
of a vertex-weighted graph. 

\begin{df}
{\rm 
The {\it generalized signless Laplace matrix} 
$\cQ(H, f)$ of a vertex-weighted graph $(H,f)$ 
is the square matrix of size $|V(H)|$ defined 
by 
\[
\cQ(H, f) := Q(H) + 2\Delta_f = A(H) + \Delta(H) + 2\Delta_f, 
\]
where 
$\Delta_f$ is the diagonal matrix defined by 
$(\Delta_f)_{xx}=f(x)$ for any $x \in V(H)$. 
}
\end{df}

We will see that the generalized signless Laplace matrix $\cQ(H,f)$ 
plays a similar role for the generalized line graph $\mathscr{L}(H,f)$ 
as the signless Laplace matrix $Q(H)$ for the line graph $\mathscr{L}(H)$ 
(see \cite{CRS2}, \cite{towards1}, \cite{towards2}, \cite{towards3} 
for recent research on signless Laplacians). 
Note that $\cQ(H,0)=Q(H)$ by definition. 

\begin{df}
{\rm 
For a vertex-weighted graph $(H,f)$, 
we define the {\it incidence matrix} 
$N_{(H, f)}$ of $(H,f)$ 
by 
\[
N_{(H, f)} := 
\begin{pmatrix}
N_{H} & N_f & N_f \\
O & I_{|f|} & - I_{|f|}
\end{pmatrix}
,
\]
where 
$N_{H}$ is the vertex-edge incidence matrix of $H$ 
and $N_f$ is the $\{0,1\}$-matrix of size $|V(H)| \times |f|$ 
such that each column has exactly one nonzero entry 
and that each row indexed by $x \in V(H)$ has exactly $f(x)$ 
nonzero entries. 
}
\end{df}

\begin{prop}\label{propinc}
Let $(H,f)$ be a vertex-weighted graph and $\Gamma := \mathscr{L}(H, f)$ 
be the generalized line graph of $(H,f)$, 
and $N:=N_{(H, f)}$ be the incidence matrix of $(H,f)$. 
Then 
\begin{eqnarray*}
N^TN &=& A(\Gamma) + 2I_{|E(H)|+2{|f|}}, \\ 
NN^T &=& 
\begin{pmatrix}
Q(H) + 2N_f N_f^T & O \\
O & 2 I_{|f|} 
\end{pmatrix} 
 =  
\begin{pmatrix}
\cQ(H,f) & O \\
O & 2 I_{|f|} 
\end{pmatrix}. \\
\end{eqnarray*}
\end{prop}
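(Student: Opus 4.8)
Both identities are verified by direct computation, interpreting the columns of $N$ as vectors assigned to the vertices of $\Gamma$. The plan is to treat $N^TN$ as the Gram matrix of these column vectors, so that its entries record squared norms and pairwise inner products, and to compute $NN^T$ by block multiplication of the given $2\times 2$ block form of $N$.

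For $NN^T$, I would partition $N$ into its top row-block $(N_H \mid N_f \mid N_f)$, whose rows are indexed by $V(H)$, and its bottom row-block $(O \mid I_{|f|} \mid -I_{|f|})$ on the remaining $|f|$ rows. Multiplying out the four resulting blocks, the top-left block is $N_H N_H^T + N_f N_f^T + N_f N_f^T$. Here I would invoke the standard incidence identity $N_H N_H^T = \Delta(H)+A(H)=Q(H)$, and observe that since each column of $N_f$ has a single $1$ and each row indexed by $x$ has exactly $f(x)$ ones, one gets $N_f N_f^T = \Delta_f$; hence the top-left block equals $Q(H)+2N_f N_f^T=Q(H)+2\Delta_f=\cQ(H,f)$. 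The bottom-right block is $I_{|f|}+I_{|f|}=2I_{|f|}$, while the two off-diagonal blocks reduce to $N_f\cdot I - N_f\cdot I=O$ (and its transpose), the cancellation being exactly what the $\pm I_{|f|}$ in the last two column-blocks is designed to produce. This yields the stated block form, and the equality of the two displayed matrices is then just the definition $\cQ(H,f)=Q(H)+2\Delta_f$.

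For $N^TN=A(\Gamma)+2I$, the columns of $N$ are indexed by $V(\Gamma)$, split into edge-columns (one per $e\in E(H)$) and, for each vertex $x$, the $f(x)$ ``plus'' and $f(x)$ ``minus'' columns forming $CP(f(x))$. First I would check that every column has squared norm $2$: an edge-column has two $1$'s coming from $N_H$, whereas each plus/minus column contributes a single $1$ from $N_f$ on top together with $\pm 1$ on the bottom; this accounts for the diagonal $2I$. The core of the argument is matching each off-diagonal inner product to the adjacency of $\Gamma$, which I would organize by the type of pair: two edge-columns meet in $1$ iff the edges share an endpoint (line-graph adjacency); an edge-column $e$ and a $\pm$-column attached to $x$ meet in $1$ iff $x\in e$ (the $CP$-to-edge adjacency), the bottom rows contributing nothing since the edge-column is zero there; and two $\pm$-columns attached to the same $x$ meet in $1$ in every case except when they are the plus and minus of the \emph{same} index, where the shared top $1$ is cancelled by the matching $+1$ and $-1$ on the bottom to give $0$. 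Columns attached to distinct vertices contribute $0$ on both top and bottom, matching non-adjacency.

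The main obstacle is this last case: verifying that the sign pattern $I_{|f|}/{-}I_{|f|}$ reproduces precisely the cocktail-party adjacency of $CP(f(x))$, namely that two of its vertices are adjacent unless they form a pair. Once one sees that only a plus-column and its partner minus-column share a bottom position (and with opposite sign, cancelling the shared top $1$), whereas distinct indices occupy distinct bottom positions and so retain their top contribution, the off-diagonal entries of $N^TN$ agree with $A(\Gamma)$ in all cases, completing the computation.
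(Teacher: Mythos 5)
Your computation is correct and complete: the paper states Proposition~\ref{propinc} without proof, treating it as exactly this routine verification, and your block multiplication for $NN^T$ (via the standard identity $N_HN_H^T=\Delta(H)+A(H)=Q(H)$ together with $N_fN_f^T=\Delta_f$) plus the Gram-matrix case analysis for $N^TN$ supplies all the needed details. In particular, you correctly isolate the only point requiring care, namely that the $I_{|f|}/{-I_{|f|}}$ sign pattern makes a plus-column orthogonal precisely to its partner minus-column while columns with distinct indices at the same vertex retain inner product $1$, which is exactly the pair structure of $CP(f(x))$, so nothing is missing.
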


In the rest of this section, we collect some results 
on generalized signless Laplace matrices.

\begin{prop}\label{prop:diam}
Let $(H,f)$ be a connected vertex-weighted graph. 
If $H$ has diameter $D$, 
then $\cQ(H, f)$ has at least $D + 1$ distinct eigenvalues. 
\end{prop}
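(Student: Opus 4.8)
The plan is to bound the number of distinct eigenvalues from below by the degree of the minimal polynomial of $\cQ := \cQ(H,f)$. Since $\cQ = A(H) + \Delta(H) + 2\Delta_f$ is real symmetric, it is diagonalizable, and for a diagonalizable matrix the number of distinct eigenvalues equals the degree of its minimal polynomial. Hence it suffices to show that the powers $I, \cQ, \cQ^2, \dots, \cQ^{D}$ are linearly independent, which forces the minimal polynomial to have degree at least $D+1$.

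The key observation I would establish is a distance interpretation of the off-diagonal entries of the powers $\cQ^j$. First dispose of the trivial case $|V(H)|=1$, where $D=0$ and the $1\times 1$ matrix has one eigenvalue. When $|V(H)| \ge 2$, connectedness of $H$ forces every vertex to have degree at least one, so each diagonal entry $\deg_H(x) + 2f(x)$ of $\cQ$ is strictly positive, while all off-diagonal entries are nonnegative; thus $\cQ$ is a nonnegative matrix with a strictly positive diagonal. Interpreting $(\cQ^j)_{xy}$ as a sum over weighted walks of length $j$ in $H$ with loops allowed (the loop at $x$ carrying weight $\deg_H(x)+2f(x)$), for distinct $x,y$ one has $(\cQ^j)_{xy}=0$ whenever $j < d_H(x,y)$, because any walk from $x$ to $y$ needs at least $d_H(x,y)$ genuine (non-loop) steps to move between the two vertices; and $(\cQ^j)_{xy}>0$ whenever $j \ge d_H(x,y)$, since a geodesic of length $d_H(x,y)$ can be padded out to length $j$ using the positive-weight loops. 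This last point is exactly where the positivity of the diagonal is needed, and it is what makes the argument go through for $\cQ$ despite its nonzero diagonal.

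With this in hand I would finish by a triangular elimination. Fix a geodesic $v_0, v_1, \dots, v_D$ realizing the diameter, so that $d_H(v_0,v_i)=i$ for each $i$. Suppose $\sum_{j=0}^{D} c_j \cQ^j = O$. Reading off the $(v_0,v_D)$ entry, all terms with $j<D$ vanish while $(\cQ^D)_{v_0 v_D}>0$, forcing $c_D=0$. Reading off the $(v_0,v_{D-1})$ entry of the remaining relation forces $c_{D-1}=0$, and so on down to $c_1=0$; finally the surviving relation $c_0 I = O$ gives $c_0=0$. Hence $I,\cQ,\dots,\cQ^{D}$ are linearly independent, the minimal polynomial of $\cQ$ has degree at least $D+1$, and $\cQ$ has at least $D+1$ distinct eigenvalues.

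I expect the only real subtlety---the main obstacle---to be the walk-counting claim: one must check both that loops cannot create a short walk between two far-apart vertices (giving the vanishing below the distance) and that the positivity of every diagonal entry lets one realize all lengths $j \ge d_H(x,y)$ (giving positivity at and above the distance). Once this is pinned down, the linear-independence step and the passage to the minimal polynomial are routine.
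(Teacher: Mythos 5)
Your proof is correct and takes essentially the same route as the paper, which simply asserts that $\{I, \cQ, \cQ^2, \ldots, \cQ^D\}$ is linearly independent and concludes from the diagonalizability of $\cQ$ that it has at least $D+1$ distinct eigenvalues. Your walk-counting argument---$(\cQ^j)_{xy}=0$ for $j<d_H(x,y)$ and $(\cQ^j)_{xy}>0$ for $j\ge d_H(x,y)$, using the strictly positive diagonal to pad geodesics with loops---is a valid filling-in of the linear-independence step that the paper leaves unjustified.
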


\begin{proof}
Let $\cQ:=\cQ(H, f)$. 
The set $\{I, {\cQ}, {\cQ}^2, \ldots, {\cQ}^D \}$ 
consists of linearly independent matrices. 
Therefore $\cQ$ has at least $D+1$ distinct eigenvalues 
(as $\cQ$ is diagonalizable). 
\end{proof}

We can show the following 
by the same proof as \cite[Lemma 9.6.1]{god}. 

\begin{prop}\label{qint}
Let $\cQ$ be the generalized signless Laplace matrix 
of a vertex-weighted graph $(H,f)$, 
and let $\pi$ be a partition of the vertex set of $H$. 
Then the eigenvalues of the quotient matrix $B_{{\cQ},\pi}$ 
interlace the eigenvalues of $\cQ$. 
Moreover, 
if the interlacing is tight, then $\pi$ is $\cQ$-equitable. 
\end{prop}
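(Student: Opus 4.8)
The plan is to follow the classical argument showing that the quotient matrix of a partition interlaces the full matrix, adapting it to $\cQ := \cQ(H,f)$. The crucial structural fact is that $\cQ = A(H) + \Delta(H) + 2\Delta_f$ is a real symmetric matrix, since $A(H)$ and the two diagonal matrices are symmetric; in particular it is diagonalizable with real eigenvalues, so the interlacing machinery applies. First I would normalize the characteristic matrix $P$ of $\pi = \{C_1, \ldots, C_t\}$. Because $P^TP$ is a positive diagonal matrix, I can form $\widetilde{P} := P\,(P^TP)^{-1/2}$, whose columns are orthonormal, so that $\widetilde{P}^T\widetilde{P} = I_t$. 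Setting $\widetilde{B} := \widetilde{P}^T \cQ \widetilde{P}$, a direct computation gives
\[
B_{\cQ,\pi} = (P^TP)^{-1/2}\,\widetilde{B}\,(P^TP)^{1/2},
\]
so $B_{\cQ,\pi}$ and $\widetilde{B}$ are similar and share the same spectrum. It therefore suffices to prove everything for the symmetric matrix $\widetilde{B}$.

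Next I would extend $\widetilde{P}$ to an orthogonal matrix $S = [\,\widetilde{P} \mid R\,]$. Then $S^T \cQ S$ is orthogonally similar to $\cQ$, hence has the same eigenvalues, and $\widetilde{B} = \widetilde{P}^T \cQ \widetilde{P}$ is exactly its leading principal submatrix. Applying the Interlacing Theorem (Theorem~\ref{0}) to $S^T \cQ S$ and its submatrix $\widetilde{B}$ yields that the eigenvalues of $\widetilde{B}$—equivalently those of $B_{\cQ,\pi}$—interlace the eigenvalues of $\cQ$, which is the first assertion.

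For the tightness claim, write
\[
S^T \cQ S = \begin{pmatrix} \widetilde{B} & C^T \\ C & D \end{pmatrix}, \qquad C = R^T \cQ \widetilde{P}.
\]
The key step is the standard consequence of tight interlacing: when the eigenvalues of a principal submatrix interlace those of the ambient symmetric matrix tightly, the complementary off-diagonal block vanishes, i.e.\ $C = O$. Granting this, $R^T \cQ \widetilde{P} = O$ says that the column space of $\widetilde{P}$ is $\cQ$-invariant, so $\cQ \widetilde{P} = \widetilde{P}\,\widetilde{B}$. Multiplying on the right by $(P^TP)^{1/2}$ and substituting $\widetilde{P} = P\,(P^TP)^{-1/2}$ together with the similarity relation above converts this into $\cQ P = P\, B_{\cQ,\pi}$. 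Reading off the $(x,j)$ entry for $x \in C_i$ gives $(\cQ P)_{xj} = (B_{\cQ,\pi})_{ij}$, which is precisely the definition of $\pi$ being $\cQ$-equitable.

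The main obstacle is the tightness step, namely proving that tight interlacing forces $C = O$. This is the technical heart of the argument and is exactly where the proof of \cite[Lemma 9.6.1]{god} does its work; I would reproduce that analysis, using the Courant--Fischer (Rayleigh quotient) description of the interlacing eigenvalues to show that each eigenvector of $\widetilde{B}$ realizing a shared eigenvalue lifts, through $S$, to an eigenvector of $\cQ$ lying in the column space of $\widetilde{P}$, which forces the off-diagonal block to be zero. Everything else is routine linear algebra specialized to the symmetric matrix $\cQ$.
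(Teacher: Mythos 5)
Your proposal is correct and is essentially the argument the paper intends: the paper proves Proposition~\ref{qint} simply by noting that ``the same proof as \cite[Lemma 9.6.1]{god}'' applies, and your normalization $\widetilde{P} = P(P^TP)^{-1/2}$, the similarity $B_{\cQ,\pi} = (P^TP)^{-1/2}\widetilde{B}(P^TP)^{1/2}$, the appeal to interlacing for the principal submatrix $\widetilde{B}$ of $S^T\cQ S$, and the derivation of $\cQ P = PB_{\cQ,\pi}$ from tightness are exactly the steps of that cited proof, which go through verbatim because $\cQ = A(H)+\Delta(H)+2\Delta_f$ is real symmetric. No gaps beyond the tightness step you correctly attribute to, and would reproduce from, the same source the paper cites.
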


\begin{prop}\label{propsemi}
Let $(H,f)$ be a vertex-weighted graph. 
Then, the following hold: 
\begin{itemize}
\item[{\rm (i)}] 
$\cQ(H, f)$ is positive semidefinite. 
\item[{\rm (ii)}]
The multiplicity of $0$ as an eigenvalue of $\cQ(H,f)$ is equal to 
the number of bipartite connected components $C$ of $H$ 
such that the restriction of 
$f$ to $C$ 
is a $0$-function. 
\end{itemize}
\end{prop}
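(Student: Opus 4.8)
The plan is to derive both parts from a single explicit expression for the quadratic form $u^{T}\cQ(H,f)\,u$ as a sum of squares. First I would invoke Proposition~\ref{propinc}: writing $N:=N_{(H,f)}$ and padding a vector $u\in\mathbb{R}^{|V(H)|}$ to $\tilde u:=(u,0)^{T}\in\mathbb{R}^{|V(H)|+|f|}$, the block form of $NN^{T}$ gives $\tilde u^{T}NN^{T}\tilde u = u^{T}\cQ(H,f)\,u$. Computing $N^{T}\tilde u$ from the displayed shape of $N$, the bottom identity blocks annihilate the padded zeros, leaving the three blocks $N_{H}^{T}u$, $N_{f}^{T}u$, $N_{f}^{T}u$. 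Reading off entries (namely $(N_{H}^{T}u)_{e}=u_{x}+u_{y}$ for an edge $e=\{x,y\}$, while each column of $N_f$ selects a single $u_x$) yields
\[
u^{T}\cQ(H,f)\,u \;=\; \|N^{T}\tilde u\|^{2} \;=\; \sum_{\{x,y\}\in E(H)}(u_{x}+u_{y})^{2} \;+\; 2\sum_{x\in V(H)}f(x)\,u_{x}^{2}.
\]

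This identity immediately proves part~(i), since the right-hand side is a nonnegative sum of squares. For part~(ii), since $\cQ(H,f)$ is symmetric and positive semidefinite, a standard argument (expanding $(u+tv)^{T}\cQ(H,f)(u+tv)\ge 0$ as a polynomial in $t$ and forcing the linear coefficient to vanish) shows that $\cQ(H,f)\,u=0$ if and only if $u^{T}\cQ(H,f)\,u=0$. By the displayed formula the latter holds exactly when $u_{x}+u_{y}=0$ for every edge $\{x,y\}$ of $H$ and $u_{x}=0$ for every vertex $x$ with $f(x)>0$. Thus I would reduce the multiplicity of $0$ to the dimension of the space of such vectors $u$.

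Next I would use that $\cQ(H,f)$ is block-diagonal with respect to the connected components of $H$, so its kernel is the direct sum of the per-component kernels and the multiplicity is the sum of the component contributions. Fix a component $C$. The edge conditions $u_{x}+u_{y}=0$ force $u$ to alternate in sign along paths; if $C$ contains an odd cycle, traversing it returns the contradiction $u_{x}=-u_{x}$, so $u\equiv 0$ on $C$ and the contribution is $0$. If $C$ is bipartite with parts $X,Y$, the alternating solutions form the one-dimensional space $u|_{X}=c$, $u|_{Y}=-c$; the extra constraint $u_{x}=0$ at vertices with $f(x)>0$ collapses this to $c=0$ unless $f$ vanishes identically on $C$. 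Hence a component contributes $1$ precisely when it is bipartite and $f|_{C}\equiv 0$, and $0$ otherwise, which is exactly the claimed count.

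I expect no genuinely hard step: the crux is organizing the argument so that the sum-of-squares identity does all the work at once. The one place demanding care is the odd-cycle computation together with the bookkeeping that cleanly separates the three cases (non-bipartite; bipartite with $f\not\equiv 0$; bipartite with $f\equiv 0$) over each component; everything else is routine once the quadratic-form identity is in hand.
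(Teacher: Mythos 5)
Your proposal is correct, but it follows a genuinely different route from the paper's proof. For part~(i) the two essentially coincide: both rest on Proposition~\ref{propinc}, since $\cQ(H,f)$ is a diagonal block of the Gram matrix $NN^{T}$ (your padding argument just makes this explicit as a sum of squares). For part~(ii), however, the paper argues via eigenvector analysis and cited spectral theory: it reduces to connected $H$, splits a $0$-eigenvector $\mathbf{x}$ into $Q(H)\mathbf{x}=\mathbf{0}$ and $\Delta_f\mathbf{x}=\mathbf{0}$ (using that both summands are positive semidefinite), invokes \cite[Proposition 2.1]{CRS2} to conclude $H$ is bipartite, and then uses the signature similarity $Q(H)=KL(H)K$ together with the Perron--Frobenius theorem to see that the kernel vector of $Q(H)$ is nowhere zero, forcing $f=0$. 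You instead extract everything from the explicit identity $u^{T}\cQ(H,f)u=\sum_{\{x,y\}\in E(H)}(u_x+u_y)^2+2\sum_{x}f(x)u_x^2$: the odd-cycle contradiction replaces the citation to \cite{CRS2}, and the observation that the alternating kernel vector $u|_X=c$, $u|_Y=-c$ is nowhere zero replaces the $Q=KLK$/Perron--Frobenius step. What your route buys is a fully self-contained, elementary proof that computes the kernel dimension per component directly (so the multiplicity statement, including edge cases such as isolated vertices, falls out with no appeal to multiplicity-one results for the Laplacian kernel); it also sidesteps some slips in the paper's writeup (e.g., the reference to $\theta_{\max}(L(H))$ where the relevant eigenvalue is $0$). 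What the paper's route buys is brevity given the signless-Laplacian literature it builds on, and it places $\cQ(H,f)$ squarely within that established theory, which is in keeping with the rest of the section. One small remark: you do not even need the polarization argument for ``$u^{T}\cQ u=0$ iff $\cQ u=0$,'' since your own identity gives $u^{T}\cQ u=\lVert N^{T}\tilde u\rVert^{2}$, so $u^{T}\cQ u=0$ forces $N^{T}\tilde u=0$ and hence $\cQ u=0$ directly from the block structure.
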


\begin{proof}
(i) Immediately from Proposition \ref{propinc}.\\
(ii) Without loss of generality, we may assume $H$ is connected. 
Assume $ \cQ(H,f)$ has an eigenvalue. 
Let ${\bf x}$ be an eigenvector with the eigenvalue $0$. 
Then $Q(H) {\bf x} = {\bf 0}$ and $\Delta_f {\bf x} = {\bf 0}$. 
By \cite[Proposition 2.1]{CRS2}, $H$ is bipartite. 
Let $H$ have the two color classes $V_R$ and $V_B$, 
and let $K$ be the diagonal matrix with $K_{xx} = 1$ 
if $x \in V_B$ and $-1$ otherwise. 
Then, it is well-known that $Q(H) = K L(H) K$. 
This means that $K {\bf x}$ is an eigenvector for $L(H)$ 
for $\theta_{\max}(L(H))$, 
and therefore we may assume is has only positive entries 
by the Perron-Frobenius theorem. 
But this means that ${\bf x}$ has no zero entry. 
This implies that $\Delta_f$ has to be the $0$-matrix. 
This shows the proposition. 
\end{proof}

\begin{cor}\label{0ev}
Let $(H,f)$ be a connected vertex-weighted graph. 
Then, $0 \in \Ev(\cQ(H,f))$ 
if and only if $H$ is bipartite and $f=0$. 
In this case, 
the multiplicity of $0$ as an eigenvalue of $\cQ(H,f)$ 
is $1$. 
\end{cor}

Proposition \ref{propinc} also implies:

\begin{prop} \label{prop:radius}
Let $(H, f)$ be a vertex-weighted graph and 
$\Gamma:=\mathscr{L}(H, f)$ be the generalized line graph of 
$(H, f)$. 
Then, the following hold: 
\begin{itemize}
\item[{\rm (i)}] 
$\Gamma$ is an integral graph if and only if 
$\cQ(H,f)$ has only integral eigenvalues. 
\item[{\rm (ii)}] 
$\Gamma$ has spectral radius $\rho$ if and only if 
$\cQ(H,f)$ has spectral radius $\rho + 2$. 
\end{itemize}
\end{prop}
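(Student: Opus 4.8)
The plan is to read everything off the two identities in Proposition~\ref{propinc}, combined with the elementary fact that $N^TN$ and $NN^T$ share the same nonzero eigenvalues (with multiplicities). Writing $N := N_{(H,f)}$, the key lemma is: if $\mathbf{v}$ is an eigenvector of $N^TN$ with eigenvalue $\mu \neq 0$, then $N\mathbf{v} \neq \mathbf{0}$ and $NN^T(N\mathbf{v}) = N(N^TN\mathbf{v}) = \mu\,N\mathbf{v}$, so $\mu \in \Ev(NN^T)$; running the argument in both directions shows $N^TN$ and $NN^T$ have identical spectra apart from the multiplicity of the eigenvalue $0$ (which differs by the difference in sizes, $|E(H)| + |f| - |V(H)|$). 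I would state this first and then derive both parts from it.

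For part (i), the first identity of Proposition~\ref{propinc} gives $N^TN = A(\Gamma) + 2I$, so $\Ev(A(\Gamma)) = \{\mu - 2 : \mu \in \Ev(N^TN)\}$ and $\Gamma$ is integral if and only if every eigenvalue of $N^TN$ is an integer. By the key lemma the nonzero eigenvalues of $N^TN$ coincide with those of $NN^T$, and the second identity gives $\Ev(NN^T) = \Ev(\cQ(H,f)) \cup \{2\}$, the value $2$ occurring with multiplicity $|f|$. Since $0$ and $2$ are integers, $\Gamma$ is integral if and only if every nonzero eigenvalue of $\cQ(H,f)$ is an integer. Finally, $\cQ(H,f)$ is positive semidefinite by Proposition~\ref{propsemi}(i), so any eigenvalue of $\cQ(H,f)$ not covered as a nonzero one must equal $0$, which is integral; hence $\Gamma$ is integral if and only if $\cQ(H,f)$ has only integral eigenvalues.

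For part (ii), all three matrices $N^TN$, $NN^T$, and $\cQ(H,f)$ are positive semidefinite, so for each one the spectral radius equals the largest eigenvalue. The spectral radius of $\Gamma$ is $\theta_{\max}(A(\Gamma)) = \theta_{\max}(N^TN) - 2$, and by the key lemma $\theta_{\max}(N^TN) = \theta_{\max}(NN^T) = \max\{\theta_{\max}(\cQ(H,f)), 2\}$. Thus the spectral radius of $\Gamma$ is $\max\{\theta_{\max}(\cQ(H,f)), 2\} - 2$, which equals $\theta_{\max}(\cQ(H,f)) - 2 = \rho(\cQ(H,f)) - 2$ precisely when $\theta_{\max}(\cQ(H,f)) \geq 2$.

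The one point needing care — the only real obstacle — is disposing of this inequality, i.e. checking that the extra eigenvalue $2$ of $NN^T$ never dominates. If $|f| > 0$ there is a vertex $x$ with $f(x) \geq 1$, whence the diagonal entry $\big(\cQ(H,f)\big)_{xx} = \deg_H(x) + 2f(x) \geq 2$; since the largest eigenvalue of a positive semidefinite matrix is at least its largest diagonal entry, $\theta_{\max}(\cQ(H,f)) \geq 2$. If $|f| = 0$ then $NN^T = \cQ(H,f)$ outright, the eigenvalue $2$ does not appear, and no comparison is needed (indeed $\theta_{\max}(A(\Gamma)) \geq 0$ forces $\theta_{\max}(\cQ(H,f)) \geq 2$ automatically). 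The only excluded configuration is $H$ a single vertex with $f = 0$, where $\Gamma$ is empty; assuming $\Gamma$ is a genuine graph, the inequality holds and the spectral radius of $\Gamma$ is $\rho(\cQ(H,f)) - 2$, which is exactly (ii).
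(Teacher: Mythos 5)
Your proposal is correct and is exactly the argument the paper intends: the paper gives no explicit proof, stating only that the result follows from Proposition~\ref{propinc}, and the standard route it implies is the one you take --- $N^TN$ and $NN^T$ share nonzero eigenvalues, $N^TN = A(\Gamma)+2I$, and $NN^T$ is block-diagonal with blocks $\cQ(H,f)$ and $2I_{|f|}$. Your extra care in checking that the eigenvalue $2$ from the $2I_{|f|}$ block never exceeds $\theta_{\max}(\cQ(H,f))$ is a detail the paper silently omits, and you resolve it correctly.
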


\section{Proof of Theorem \ref{thm:001}}

In this section, we prove Theorem \ref{thm:001}. 
If $\Gamma$ has spectral radius three and $\Gamma$ is non-bipartite, 
then $-3 \not\in \Ev(A(\Gamma))$. 
Since $\Gamma$ is an integral graph, 
we have $\Ev(A(\Gamma)) \subseteq \{-2,-1,0,1,2,3\}$. 
By Theorem \ref{thm:Cameron}, 
$\Gamma$ is either a generalized line graph 
or an exceptional graph. 
We deal with the case of generalized line graphs 
in Subsection \ref{sec:proof-GLG} 
and the case of exceptional graphs in Subsection \ref{sec:proof-EG}. 
Then, Theorem \ref{thm:001} follows from 
Theorems \ref{thm:GLG} and \ref{thm:EG001}.

\subsection{The case of generalized line graphs}\label{sec:proof-GLG}

In this subsection we determine 
the connected integral generalized line graphs 
with spectral radius three. We will show:

\begin{thm}\label{thm:GLG}
Let $\Gamma$ be a connected integral generalized line graph 
with spectral radius three. 
Then, $\Gamma$ is one of the $9$ graphs in Figure \ref{fig:GLG}. 
\end{thm}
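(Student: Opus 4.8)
The plan is to push everything through the generalized signless Laplace matrix and exploit its positive semidefiniteness. By Proposition~\ref{prop:radius}, a connected generalized line graph $\Gamma = \mathscr{L}(H,f)$ is integral with spectral radius three if and only if $\cQ := \cQ(H,f)$ has only integral eigenvalues and spectral radius $5$; since $\cQ$ is positive semidefinite (Proposition~\ref{propsemi}(i)) this means $\Ev(\cQ) \subseteq \{0,1,2,3,4,5\}$, and by irreducibility of $\cQ$ (as $H$ is connected) the value $5$ occurs with multiplicity one and admits a positive Perron eigenvector. I would first record that such a $\Gamma$ is automatically non-bipartite: a connected bipartite graph of spectral radius three would have $-3$ as an eigenvalue, which is impossible since every generalized line graph has smallest eigenvalue at least $-2$. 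Thus it suffices to enumerate the connected vertex-weighted graphs $(H,f)$ for which $\cQ$ has integral spectrum with $\rho(\cQ) = 5$, and then read off $\Gamma = \mathscr{L}(H,f)$.

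Next I would bound the local structure of $H$. Each diagonal entry $(\cQ)_{xx} = \deg_H(x) + 2f(x)$ is a $1\times 1$ principal submatrix, so interlacing (Theorem~\ref{0}) gives $\deg_H(x) + 2f(x) \leq 5$; testing the Perron eigenvector $v > 0$ sharpens this, since $\sum_{y \sim x} v_y = (5 - \deg_H(x) - 2f(x))\,v_x > 0$ forces $\deg_H(x) + 2f(x) \leq 4$ for every $x$ once $|V(H)| \geq 2$. Hence $H$ has maximum degree at most $4$, any weighted vertex satisfies $f(x) = 1$ and $\deg_H(x) \leq 2$, and, since $\rho(\cQ)$ is at least the average row sum $\tfrac{1}{|V(H)|}\bigl(4|E(H)| + 2|f|\bigr)$, one also obtains $4|E(H)| + 2|f| \leq 5\,|V(H)|$ together with the existence of a vertex of ``load'' $\deg_H(x) + f(x) \geq 3$. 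Proposition~\ref{prop:diam} combined with $|\Ev(\cQ)| \leq 6$ bounds $\diam(H) \leq 5$, improving to $\diam(H) \leq 4$ the moment Corollary~\ref{0ev} excludes the eigenvalue $0$, i.e.\ as soon as $f \neq 0$ or $H$ is non-bipartite. Bounded degree together with bounded diameter makes the set of candidate graphs $H$ finite.

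The main work, and the real obstacle, is the finite but delicate enumeration. The central pruning device is that for every induced vertex-weighted subgraph $(H',f|_{H'})$ one has entrywise $O \leq \cQ(H',f|_{H'}) \leq \cQ(H,f)|_{H'}$, so the Perron-Frobenius Theorem (Theorem~\ref{perfro}) gives $\rho\bigl(\cQ(H',f|_{H'})\bigr) \leq \rho(\cQ(H,f)) = 5$; any small configuration whose generalized signless Laplace matrix already exceeds spectral radius $5$ is therefore forbidden as an induced subgraph. I would split into the line-graph case $f = 0$ — classifying connected $H$ with integral signless Laplace spectrum and $\rho(Q(H)) = 5$, which should yield LG4, LG6, LG7a, LG7b, LG12 — and the case $|f| \geq 1$, which should yield GLG5, GLG8, GLG10, GLG13. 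Within each case I would start from a vertex of maximal load, grow $H$ outward subject to the forbidden-subgraph, average-degree, and diameter constraints, and then test each surviving candidate directly for integrality of $\Spec(\cQ)$. For every $(H,f)$ that passes I would form $\Gamma = \mathscr{L}(H,f)$ and match it against the nine graphs of Figure~\ref{fig:GLG}, checking the spectra against Table~\ref{t:001}. The hard part is guaranteeing \emph{completeness}: one must ensure the interlacing and integrality obstructions are applied exhaustively enough to terminate the branching, so that the nine listed graphs are genuinely the only survivors rather than merely the examples one happens to find.
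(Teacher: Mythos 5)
Your framework is exactly the paper's: reduce via Proposition~\ref{prop:radius} to classifying connected vertex-weighted graphs $(H,f)$ with $5\in\Ev(\cQ(H,f))\subseteq\{0,1,2,3,4,5\}$, bound the local structure through the diagonal entries $\deg_H(x)+2f(x)$ (your Perron-eigenvector sharpening to $\deg_H(x)+2f(x)\le 4$ for $|V(H)|\ge 2$ is correct and matches the paper's Lemma~\ref{lem:deg-f-pair}, including the $K_{1,4}$ borderline case), bound $\diam(H)\le 5$, resp.\ $\le 4$, by Proposition~\ref{prop:diam} and Corollary~\ref{0ev}, and prune via Perron--Frobenius monotonicity of induced sub-configurations. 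All of that is sound. But the theorem \emph{is} the enumeration, and there your proposal has a genuine gap, which you concede yourself: degree at most $4$ and diameter at most $5$ only bounds $|V(H)|$ by several hundred vertices, and ``grow $H$ outward from a vertex of maximal load and test survivors'' is a search plan with no termination certificate, not a proof of completeness. Forbidden-subgraph pruning alone does not reduce this to a humanly (or even cleanly machine-) checkable list.

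What the paper supplies, and your proposal is missing, is the counting machinery that collapses the search. From $\tr(\cQ^i)=\sum_r r^i m_r$ for $i=0,1,2$, expressed in the type counts $a_{(i,j)}$ (Proposition~\ref{prop:trace}), one gets the linear relation $4m_0-2m_2-2m_3+4=a_{(1,0)}+a_{(3,0)}-a_{(1,1)}+2a_{(2,1)}$ (Corollary~\ref{cor:tr-8}). Splitting on whether $0\in\Ev(\cQ)$ --- by Corollary~\ref{0ev} equivalent to ($H$ bipartite and $f=0$), a cleaner dichotomy than your $f=0$ versus $|f|\ge 1$, since $f=0$ with $H$ non-bipartite lands in the second case --- this forces, e.g., $a_{(1,0)}+a_{(3,0)}\le 8$ and $m_2+m_3\le 3$ in the bipartite case, producing the finitely many parameter families of Table~\ref{table:para}; these are then killed one by one using diameter--vertex counts, quotient-matrix interlacing on the degree partition (Lemma~\ref{lemjack}, Proposition~\ref{propjack}), and the divisibility constraint $|V(H)|=2^a3^b5^c$ from the spanning-tree count (Corollary~\ref{cor:235}), none of which appear in your outline. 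In the weighted case the paper's key structural trick is Proposition~\ref{prop:change}, the integrality-preserving switch between a triangle with two type-$(2,0)$ vertices and two pendant type-$(1,1)$ vertices, which reduces everything to the configurations settled in Propositions~\ref{prop:H368} and~\ref{prop:none}. Without the trace identities and this switching reduction, your branching does not terminate in any controlled way, so the proposal is a correct setup that stops short of proving the classification.
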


Let $\Gamma$ be a connected integral generalized line graph 
with spectral radius three, say $\Gamma = \mathscr{L}(H,f)$ 
for some connected vertex-weighted graph $(H, f)$. 
Then the generalized signless Laplace matrix $\cQ(H,f)$ is integral 
and has spectral radius five. 
So, instead of determining the connected integral generalized line graphs 
with spectral radius three, we will first determine 
the connected vertex-weighted graph $(H, f)$ 
whose generalized signless Laplace matrix $\cQ(H, f)$ 
has only integral eigenvalues and spectral radius five. 

First we will give some more general results and 
then we will consider the case where $0 \in \Ev(\cQ(H,f))$ 
and after that we will consider the case $0 \not \in \Ev(\cQ(H,f))$. 
One of the reasons to do so is that $H$ 
usually has much less vertices than $\Gamma$. 
We give a computer-free proof.
It is easy to see that the generalized line graphs of 
the vertex-weighted graphs 
$(H_1,0)$, $(H_2,0)$, $(H_3,0)$, $(H_4,0)$, 
$(H_5, f_5)$, $(H_6, f_6)$, $(H_7, f_7)$, $(H_8, f_8)$, 
$(H_9, f_9)$ in Figures \ref{fig:H1-5} and \ref{fig:H6-9} 
are the graphs 
LG4, LG6, LG7b, LG12, GLG5, GLG8, LG7a, GLG10, GLG13 
in Figure \ref{fig:GLG}, respectively. 
By Proposition \ref{prop:radius}, 
Theorem \ref{thm:GLG} follows from 
Propositions \ref{prop:gr0ev} and \ref{prop:not0ev}.

\subsubsection{General results}

In this subsection we will develop some general results 
to help us in this case of generalized line graphs.  

Let us begin with the following lemma:

\begin{lem}\label{lem:deg-f-pair}
Let $(H,f)$ be a connected vertex-weighted graph with 
$\Ev(\cQ(H,f)) \subseteq \{\theta \in \bR \mid \theta \leq 5 \}$. 
Then, for each $x \in V(H)$, we have 
\[
(\deg_{H}(x),f(x)) \in \{(1,0),(1,1),(2,0),(2,1),(3,0), (4,0)\}. 
\]
Moreover, if there is a vertex $x$ with $(\deg_{H}(x),f(x)) = (4, 0)$, 
then $H = H_1 (=K_{1,4})$ and $f=0$. 
\end{lem}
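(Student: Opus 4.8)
The plan is to read the bound off the diagonal of $\cQ(H,f)$ and then eliminate the two borderline situations by interlacing of small principal submatrices. Recall that the $(x,x)$-entry of $\cQ(H,f) = A(H) + \Delta(H) + 2\Delta_f$ equals $\deg_{H}(x) + 2f(x)$, and that by Proposition~\ref{propsemi}(i) the matrix is positive semidefinite, so $\Ev(\cQ(H,f)) \subseteq [0,5]$. Applying the Interlacing Theorem (Theorem~\ref{0}) to the $1\times 1$ principal submatrix at $x$ immediately gives $\deg_{H}(x) + 2f(x) \le 5$ for every vertex $x$. Throughout I would assume $|V(H)| \ge 2$ (the one-vertex case gives $\mathscr{L}(H,f) = CP(f(x))$ and is handled directly), so that every vertex has at least one neighbour.

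First I would dispose of the cases where the diagonal entry equals $5$, namely $(\deg_{H}(x),f(x)) \in \{(1,2),(3,1),(5,0)\}$. In each such case $x$ has a neighbour $y$, and the $2\times 2$ principal submatrix of $\cQ(H,f)$ on $\{x,y\}$ is $\left(\begin{smallmatrix} 5 & 1 \\ 1 & b \end{smallmatrix}\right)$ with $b = \deg_{H}(y) + 2f(y) \ge 1$. Its largest eigenvalue is $\tfrac12\bigl(5+b+\sqrt{(5-b)^2+4}\bigr) > \tfrac12\bigl(5+b+(5-b)\bigr) = 5$, contradicting interlacing. Hence $\deg_{H}(x) + 2f(x) \le 4$ for all $x$, and enumerating the pairs $(\deg_{H}(x),f(x))$ with $\deg_{H}(x) \ge 1$ and $\deg_{H}(x) + 2f(x) \le 4$ yields exactly the six pairs in the statement.

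For the ``moreover'' part, suppose $(\deg_{H}(x),f(x)) = (4,0)$, and let $M_0$ be the $5\times 5$ principal submatrix of $\cQ(H,f)$ indexed by $x$ and its four neighbours $y_1,\dots,y_4$. Its diagonal is $(4,d_1,\dots,d_4)$ with $d_i = \deg_{H}(y_i) + 2f(y_i) \ge 1$, the first row and column carry $1$'s to the $y_i$, and the remaining off-diagonal entries record adjacencies among the $y_i$. The key observation is that $M_0$ dominates entrywise the matrix $W := \cQ(K_{1,4},0)$, which satisfies $\rho(W) = 5$: indeed $O \le W \le M_0$, and $M_0$ is irreducible since the star on $\{x,y_1,\dots,y_4\}$ is connected. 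Interlacing gives $\theta_{\max}(M_0) \le \theta_{\max}(\cQ(H,f)) \le 5$, while the domination clause of the Perron--Frobenius Theorem (Theorem~\ref{perfro}) gives $\rho(M_0) \ge \rho(W) = 5$; as $M_0$ is nonnegative with real spectrum, $\rho(M_0) = \theta_{\max}(M_0)$, so $\theta_{\max}(M_0) = 5$. The equality clause of the same theorem then forces $W = M_0$, i.e.\ $d_i = 1$ for each $i$ and there are no edges among the $y_i$. Thus each $y_i$ is a weight-$0$ leaf adjacent only to $x$, so the component of $x$ is $K_{1,4}$; by connectedness $H = K_{1,4}$ and $f = 0$.

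I expect the ``moreover'' part to be the main obstacle. The list itself drops out of the $1\times 1$ and $2\times 2$ interlacing bounds with almost no computation, but pinning $H$ down exactly requires invoking the \emph{equality} case of Perron--Frobenius with the correctly chosen pair of matrices. The delicate points there are the entrywise domination $W \le M_0$, the irreducibility of $M_0$ (needed both so that $\rho(M_0) = \theta_{\max}(M_0)$ and so that the equality clause applies), and the final observation that $d_i = 1$ already forces $\deg_{H}(y_i) = 1$ and $f(y_i) = 0$, which is what rules out any larger host graph.
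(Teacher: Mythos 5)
Your proposal is correct and takes essentially the same approach as the paper: both read the constraint off the diagonal entries of $\cQ(H,f)$ via principal submatrices, and both settle the ``moreover'' part by combining $\rho(\cQ(K_{1,4},0))=5$ with the equality clause of the Perron--Frobenius Theorem (Theorem~\ref{perfro}) to force the dominated matrix to equal the dominating one, whence $H=K_{1,4}$ and $f=0$. The only cosmetic difference is that you exclude diagonal entries equal to $5$ by a $2\times 2$ interlacing computation with a neighbour, while the paper invokes the Perron--Frobenius equality clause directly (a diagonal entry $5$ would force $|V(H)|=1$, which is incompatible with $\deg_H(x)+2f(x)=5$).
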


\begin{proof}
By the Perron-Frobenius Theorem (Theorem \ref{perfro}), 
we have $(\cQ(H, f))_{xx} \leq 5$ 
and $(\cQ(H, f))_{xx} = 5$ only if $|V(H)|=1$. 
Since $(\cQ(H, f))_{xx}= \deg_{H}(x) + 2f(x)$, 
the first part of the lemma holds. 
As $\cQ(K_{1,4}, 0)$ has spectral radius $5$ 
the moreover part follows immediately from the Perron-Frobenius Theorem. 
\end{proof}

For nonnegative integers $i$ and $j$, 
let 
\[
A_{(i,j)}:=\{x \in V(H) \mid (\deg_{H}(x),f(x)) = (i,j) \}
\]
By Lemma \ref{lem:deg-f-pair}, 
if $(H,f) \neq (K_{1,4},0)$, then we have 
\[
V(H) = A_{(1,0)} \cup A_{(1,1)} \cup A_{(2,0)} \cup A_{(2,1)} 
\cup A_{(3,0)}. 
\]
We say a vertex $x$ in $H$ is {\it of type $(i,j)$} if $x \in A_{(i,j)}$. 
Let $a_{(i,j)}$ denote the cardinality of $A_{(i,j)}$.

\begin{lem}\label{substr}
Let $(H,f)$ be a connected vertex-weighted graph with 
$5 \in \Ev(\cQ(H, f)) \subseteq \{0,1,2,3,4,5\}$ 
such that $H$ has maximum degree $3$. 
Then, we have the following: 
\begin{itemize}
\item[{\rm (1)}] 
If $a_{(1,0)} \neq 0$, then $0 \in \Ev(\cQ(H, f))$, 
and hence $H$ is bipartite and $f =0$;
\item[{\rm (2)}] 
If $x, y \in A_{(1,0)}$, then $d_H(x,y) \leq 2$; 
\item[{\rm (3)}] 
$a_{(1,0)} \leq 2$; 
\item[{\rm (4)}] 
If $H$ has two adjacent vertices $x, y \in A_{(2,0)}$ 
and if they do not have a common neighbour, 
then  $0 \in \Ev(\cQ(H, f))$;
\item[{\rm (5)}] 
$H$ does not contain an induced subgraph $H'$ with exactly two 
components each of which is a cycle. 
\end{itemize}
\end{lem}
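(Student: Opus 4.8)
The common engine for all five parts is the following. Write $\cQ := \cQ(H,f)$. By Proposition~\ref{propsemi}(i), $\cQ$ is positive semidefinite; its spectrum is integral and contained in $\{0,1,2,3,4,5\}$ by hypothesis; its top eigenvalue $5=\rho(\cQ)$ is simple by Perron--Frobenius (Theorem~\ref{perfro}, as $\cQ$ is irreducible); and, when present, its eigenvalue $0$ is also simple by Corollary~\ref{0ev}. Throughout I will use the quadratic form
\[
w^{T}\cQ w = \sum_{uv\in E(H)}(w_u+w_v)^2 + 2\sum_{x\in V(H)} f(x)\,w_x^2 .
\]
Because the spectrum is integral, to prove $0\in\Ev(\cQ)$ it suffices to produce a vector $w$ with Rayleigh quotient $R(w):=w^{T}\cQ w / w^{T}w<1$ (then $\theta_{\min}(\cQ)<1$, hence $=0$), or a vector with $R(w)=1$ that is \emph{not} a $1$-eigenvector (since $R(w)=\theta_{\min}(\cQ)$ forces $w$ into the $\theta_{\min}$-eigenspace, $R(w)=1$ would otherwise make $w$ a $1$-eigenvector; so again $\theta_{\min}<1$). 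Symmetrically, I will use $2$-dimensional compressions and interlacing (Theorem~\ref{0}) to trap two eigenvalues either below $1$, contradicting the simplicity of $0$, or above $4$, contradicting the simplicity of $5$.

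For part (1), let $x\in A_{(1,0)}$ with unique neighbour $p$. Taking $w=e_x$ gives $w^{T}\cQ w=\cQ_{xx}=\deg_H(x)+2f(x)=1=w^{T}w$, so $R(w)=1$; but $(\cQ e_x)_p=A(H)_{px}=1\neq0=(e_x)_p$, so $e_x$ is not a $1$-eigenvector. Hence $0\in\Ev(\cQ)$, and Corollary~\ref{0ev} yields that $H$ is bipartite and $f=0$. Part (4) is the same idea with $w=e_x-e_y$ for the adjacent pair $x,y\in A_{(2,0)}$: writing $x'$ (resp. $y'$) for the second neighbour of $x$ (resp. $y$), the edges $xx'$ and $yy'$ each contribute $1$ and $xy$ contributes $0$, so $w^{T}\cQ w=2=w^{T}w$ and $R(w)=1$. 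The hypothesis that $x,y$ have no common neighbour gives $x'\neq y'$, hence $y\not\sim x'$, so $(\cQ w)_{x'}=w_x=1\neq0=w_{x'}$ and $w$ is not a $1$-eigenvector; thus $0\in\Ev(\cQ)$.

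Part (3) will follow from part (2): two vertices of $A_{(1,0)}$ cannot be adjacent (they would form a $K_2$ component), so by (2) any two of them lie at distance exactly $2$ and hence share their unique neighbour. Three such vertices $x,y,z$ would then share one neighbour $c$, forcing $\deg_H(c)=3$ and $N(c)=\{x,y,z\}$, i.e. $H=K_{1,3}$; but $\rho(\cQ(K_{1,3},0))=4\neq5$, a contradiction, so $a_{(1,0)}\le2$. For part (5), suppose $H$ contains an induced $H'=C_m\sqcup C_n$. For each cycle $C$ the indicator satisfies $\chi_C^{T}\cQ\chi_C = 4|C| + b_C + 2\sum_{z\in C}f(z)$, where $b_C$ counts the edges leaving $C$; connectedness of $H$ together with the absence of edges between $C_m$ and $C_n$ forces $b_{C_m},b_{C_n}>0$, so $R(\chi_{C_m}),R(\chi_{C_n})>4$. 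Since $\chi_{C_m}\perp\chi_{C_n}$ and $\chi_{C_m}^{T}\cQ\chi_{C_n}=0$, the compression of $\cQ$ to $\mathrm{span}(\chi_{C_m},\chi_{C_n})$ is diagonal with both entries $>4$; by interlacing $\theta_2(\cQ)>4$, hence $\theta_1(\cQ)=\theta_2(\cQ)=5$, contradicting simplicity of $5$.

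The main work is part (2), the only place where a single test vector is insufficient. By (1) I may assume $H$ bipartite and $f=0$, so $\cQ=Q(H)$ and $0$ is simple. Suppose $x,y\in A_{(1,0)}$ with $d_H(x,y)\ge3$, and let $x_1,y_1$ be their neighbours; then $x_1\neq y_1$, $x_1\not\sim y$, $y_1\not\sim x$, and the only possible edge between $\{x,x_1\}$ and $\{y,y_1\}$ is $x_1y_1$ (present exactly when $d_H(x,y)=3$). The plan is to use the perturbed pendant vectors $u_1=e_x-t\,e_{x_1}$ and $u_2=e_y-t\,e_{y_1}$: a short computation gives $R(u_1)=\frac{1-2t+\deg_H(x_1)t^2}{1+t^2}$, whose derivative at $t=0$ is $-2$, so $R(u_1)<1$ and likewise $R(u_2)<1$ for all small $t>0$. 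The supports are disjoint, so $u_1\perp u_2$, and the single off-diagonal contribution comes from the edge $x_1y_1$, equal to $t^2/(1+t^2)$ after normalisation, which is of lower order than the gaps $1-R(u_i)\sim 2t$. The delicate point, and the main obstacle, is precisely to choose $t>0$ small enough that the top eigenvalue of the $2\times2$ compression stays strictly below $1$ despite this cross term (for $d_H(x,y)\ge4$ the cross term vanishes and nothing need be checked). Granting this, interlacing gives $\theta_{n-1}(\cQ)<1$, hence $0$ has multiplicity at least $2$, contradicting its simplicity; therefore $d_H(x,y)\le2$.
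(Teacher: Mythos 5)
Your proof is correct, and it reaches all five conclusions by the same overall skeleton as the paper --- a local witness forces an eigenvalue below $1$ (hence equal to $0$, by integrality and positive semidefiniteness), two far-apart witnesses violate the simplicity of $0$ (Corollary~\ref{0ev}), and two ``heavy'' witnesses violate the Perron--Frobenius simplicity of $5$ --- but your implementation is genuinely different. The paper works throughout with small \emph{principal submatrices} and Theorem~\ref{0}: for (1) it restricts $\cQ$ to $\{x,u\}$, for (4) to $\{x,y,x'\}$, for (2) to the $4$-vertex path $x,u,v,y$ (asserting, without computation, that its second-smallest eigenvalue is below $1$), and for (5) to $V(H')$, where each cycle block has a degree-$3$ vertex and hence largest eigenvalue above $4$. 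You replace these with explicit test vectors ($e_x$; $e_x-e_y$; the perturbed pendant vectors $e_x-t\,e_{x_1}$; the cycle indicators) and Rayleigh-quotient estimates. What the paper's route buys is brevity (each case is a $2\times2$ or $3\times3$ determinant check); what yours buys is that the one genuinely nontrivial case, distance exactly $3$ in (2), becomes fully quantitative instead of an asserted eigenvalue fact about the path matrix, and your ``$R(w)=1$ but $w$ is not a $1$-eigenvector'' device is a clean variational rendition of the paper's determinant computations in (1) and (4).

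Two small repairs you should make, neither a gap in the ideas. First, your interlacing steps in (2) and (5) compress $\cQ$ onto spans of non-coordinate vectors, which is covered neither by the paper's Theorem~\ref{0} (principal submatrices) nor by Proposition~\ref{qint} (partition quotients); appeal instead to Courant--Fischer or Haemers-type interlacing: in (5) every unit vector in the span of the two orthogonal normalized indicators has Rayleigh quotient greater than $4$, so $\theta_2(\cQ)>4$ directly by min--max, and symmetrically in (2) every unit vector in the span of $u_1,u_2$ has Rayleigh quotient below $1$, so $\theta_{n-1}(\cQ)<1$. Second, do not leave the choice of $t$ at ``granting this'': since $\deg_H(x_1),\deg_H(y_1)\le 3$, the top eigenvalue of your $2\times2$ compression is at most $\max_i R(u_i)+\frac{t^2}{1+t^2}\le\frac{1-2t+4t^2}{1+t^2}$, which is below $1$ exactly when $0<t<\frac{2}{3}$; taking $t=\frac{1}{2}$ closes the argument explicitly. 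With these routine patches everything is sound, including your use of part (1) to justify $f=0$ before computing with $Q(H)$ in (2) and with $\cQ(K_{1,3},0)$ in (3).
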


\begin{proof}
(1) 
Let $x$ be a vertex of degree $1$ and 
let $u$ be its unique neigbour. 
Then the signless Laplace matrix restricted to $\{u, x\}$ 
has smallest eigenvalue less then $1$. This shows (1). \\
(2) 
This shows that for $x,y \in A_{(1,0)}$ we have 
$d_H(x,y) \leq 3$, as $0$ has multiplicity at most one. 
If $x$ and $y$ have distance $3$ 
then let $x, u, v, y$ be a shortest path between $x$ and $y$. 
Now the signless Laplace matrix restricted to $\{x, u, v, y\}$ 
has second smallest eigenvalue less then one, 
which is impossible by interlacing 
as the multiplicity of $0$ is at most one.  \\
(3) 
If $a_1 \geq 3$, then let $x, y, z$ be three vertices of degree $1$. 
Let $u$ be their unique common neighbour. 
But then $H = K_{1,3}$, a contradiction with that 
the multiplicity of $5$ is one. \\
(4) Consider the principal submatrix of $\cQ$ indexed by $x, y, $ 
and the other neighbour of $x$. 
Then this submatrix has smallest eigenvalue smaller then one. 
The statement now immediately follows from interlacing. 
(5) 
As $H$ is connected each cycle has a vertex of degree $3$, 
which implies if we look at the signless Laplace matrix 
with restricted 
to $H'$ then this matrix has two eigenvalues more than $4$, 
a contradiction as by interlacing $m_5 \geq 2$, but $m_5 = 1$. 
This completes the proof.
\end{proof}

Let $(H, f)$ be a connected vertex-weighted graph such that 
$5 \in\Ev(\cQ(H,f)) \subseteq \{0,1,2,3,4,5\}$. 
Let $m_r$ denote the multiplicity of $r \in \bR$ 
as an eigenvalue of $\cQ:=\cQ(H,f)$. 
Since $\Ev(\cQ(H,f)) \subseteq \{0,1,2,3,4,5\}$, 
we have $m_r=0$ for $r \in \bR \setminus \{0,1,2,3,4,5\}$. 
Note that $m_5 =1$ and $m_0 \in \{0, 1\}$. 

By the equations 
$\text{tr}(\cQ^i) = \sum_{r \in \bR} r^i m_r$ for $i = 0,1,2,3$, 
we obtain the following:

\begin{prop}\label{prop:trace}
Let $(H,f)$ be a connected vertex-weighted graph with 
$5 \in \Ev(\cQ(H,f))) \subseteq \{0,1,2,3,4,5\}$. 
Then, the following hold: 
\begin{eqnarray}
\label{eq:m2-0}
m_0 + m_1 + m_2 + m_3 + m_4 + 1 
&=& a_{(1,0)} + a_{(2,0)} + a_{(3,0)} + a_{(1,1)} + a_{(2,1)}, \\
\label{eq:m2-1}
m_1 + 2 m_2 + 3 m_3 + 4 m_4 + 5 
&=& a_{(1,0)} + 2 a_{(2,0)} + 3 a_{(3,0)} + 3 a_{(1,1)} + 4 a_{(2,1)}, \\
\label{eq:m2-2}
m_1 + 4 m_2 + 9 m_3 + 16 m_4 + 25 
&=& 2a_{(1,0)} + 6 a_{(2,0)} + 12 a_{(3,0)} + 10 a_{(1,1)} + 18 a_{(2,1)}. 
\end{eqnarray}
\end{prop}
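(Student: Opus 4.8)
The plan is to obtain the three identities as \emph{moment equations}, computing $\tr(\cQ^i)$ in two different ways for $i = 0, 1, 2$, where $\cQ := \cQ(H,f)$. On the spectral side the hypothesis $\Ev(\cQ) \subseteq \{0,1,2,3,4,5\}$ forces $m_r = 0$ for every $r \notin \{0,1,2,3,4,5\}$, while $5 \in \Ev(\cQ)$ together with the Perron--Frobenius Theorem (Theorem \ref{perfro}) gives $m_5 = 1$. Hence $\tr(\cQ^i) = \sum_{r \in \bR} r^i m_r = m_1 + 2^i m_2 + 3^i m_3 + 4^i m_4 + 5^i$ for $i \in \{1,2\}$, and $\tr(\cQ^0) = \sum_r m_r = m_0 + m_1 + m_2 + m_3 + m_4 + 1$. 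These are precisely the left-hand sides of \eqref{eq:m2-0}--\eqref{eq:m2-2}, so all that remains is to match them with the entry-side computation of the same traces.

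On the entry side I would first record the shape of $\cQ$ on the five admissible vertex types. Since $\cQ = A(H) + \Delta(H) + 2\Delta_f$, the diagonal entry at a vertex $x$ of type $(i,j)$ equals $\deg_H(x) + 2f(x) = i + 2j$; thus the types $(1,0),(2,0),(3,0),(1,1),(2,1)$ carry diagonal values $1,2,3,3,4$ respectively. Moreover, because $\Delta(H)$ and $\Delta_f$ are diagonal, every off-diagonal entry of $\cQ$ coincides with the corresponding entry of $A(H)$.

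With these observations the three computations become routine bookkeeping by vertex type. For $i = 0$, $\tr(I) = |V(H)|$ counts the vertices type by type and gives \eqref{eq:m2-0}. For $i = 1$, $\tr(\cQ) = \sum_x \cQ_{xx}$ groups the diagonal values $1,2,3,3,4$ according to $a_{(i,j)}$ and gives \eqref{eq:m2-1}. For $i = 2$, I split $\tr(\cQ^2) = \sum_x \cQ_{xx}^2 + \sum_{x \neq y} \cQ_{xy}^2$: the first sum contributes the squared diagonal values $1,4,9,9,16$ weighted by type, while the second equals $\sum_{x \neq y} A(H)_{xy}^2 = 2|E(H)| = \sum_x \deg_H(x)$, which regroups by type as $a_{(1,0)} + 2a_{(2,0)} + 3a_{(3,0)} + a_{(1,1)} + 2a_{(2,1)}$. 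Adding the two pieces yields \eqref{eq:m2-2}.

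There is no genuine obstacle here; the proposition is a direct consequence of the trace--power formula. The only point requiring a moment's care is the case $i = 2$, where one must not forget the off-diagonal contribution $2|E(H)| = \sum_x \deg_H(x)$ to $\tr(\cQ^2)$ in addition to the sum of squared diagonal entries.
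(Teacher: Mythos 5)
Your proposal is correct and takes essentially the same route as the paper: the paper's proof likewise notes $m_5=1$ (connectedness plus Perron--Frobenius) and reads off the three identities from $\tr(\cQ^i)=\sum_{r}r^i m_r$ for $i=0,1,2$. You merely spell out the entrywise trace computations (diagonal values $1,2,3,3,4$ by type, and the off-diagonal contribution $2|E(H)|$ for $i=2$) that the paper leaves implicit.
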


\begin{proof}
Since $5 \in \Ev(\cQ(H,f))$ and $H$ is connected, we have $m_5=1$. 
By considering the equation 
$\text{tr}(\cQ^i)= \sum_{r \in \bR} r^i m_r$ for $i = 0,1,2,3$, 
we obtain the equations. 
\end{proof}

\begin{cor}\label{cor:tr-8}
Let $(H,f)$ be a connected vertex-weighted graph with 
$5 \in \Ev(\cQ(H,f))) \subseteq \{0,1,2,3,4,5\}$. 
Then, the following holds: 
\begin{equation}\label{eq:2-8}
4 m_0 - 2m_2 - 2m_3 + 4= a_{(1,0)} + a_{(3,0)} - a_{(1,1)} + 2 a_{(2,1)}. 
\end{equation}
\end{cor}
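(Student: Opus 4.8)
The plan is to obtain \eqref{eq:2-8} as a single linear combination of the three trace identities \eqref{eq:m2-0}, \eqref{eq:m2-1}, and \eqref{eq:m2-2} proved in Proposition \ref{prop:trace}. The key observation is that the left-hand side of the target identity \eqref{eq:2-8} involves only the multiplicities $m_0$, $m_2$, and $m_3$; the multiplicities $m_1$ and $m_4$ do not appear. Since \eqref{eq:m2-0}--\eqref{eq:m2-2} together involve all five of $m_0,\dots,m_4$, the natural approach is to look for scalars $\alpha,\beta,\gamma$ such that $\alpha$ times \eqref{eq:m2-0} added to $\beta$ times \eqref{eq:m2-1} added to $\gamma$ times \eqref{eq:m2-2} cancels $m_1$ and $m_4$ while leaving the prescribed coefficients on $m_0$, $m_2$, and $m_3$.

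First I would read off, from the left-hand sides of \eqref{eq:m2-0}--\eqref{eq:m2-2}, the coefficient of each $m_i$ in the combination. This produces five linear conditions on $(\alpha,\beta,\gamma)$: the coefficient of $m_0$ equals $\alpha$ and must be $4$; the coefficients of $m_1$ and $m_4$ must vanish; and the coefficients of $m_2$ and $m_3$ must each equal $-2$. Solving the three conditions coming from $m_0$, $m_1$, and $m_2$ already pins down $(\alpha,\beta,\gamma)=(4,-5,1)$.

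The one genuine point to check is consistency. The five conditions form an overdetermined system, three unknowns subject to five equations, so the identity could in principle fail; what makes it work is that the solution $(4,-5,1)$ also satisfies the two remaining conditions, those coming from $m_3$ and $m_4$. This verification is the heart of the matter, and it is routine: I would simply substitute. Once it is confirmed, the constant terms on the left combine as $4\cdot1 - 5\cdot5 + 25 = 4$, matching the $+4$ in \eqref{eq:2-8}.

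It then remains to evaluate the right-hand side of the same combination by collecting, for each pair $(i,j)$, the coefficient of $a_{(i,j)}$ coming from the right-hand sides of \eqref{eq:m2-0}--\eqref{eq:m2-2}. Here the coefficient of $a_{(2,0)}$ works out to $4 - 10 + 6 = 0$, so that term disappears, while the remaining coefficients reproduce exactly $a_{(1,0)} + a_{(3,0)} - a_{(1,1)} + 2a_{(2,1)}$. Equating the two sides gives \eqref{eq:2-8}. I do not anticipate any real obstacle: the entire corollary is an elementary Gaussian elimination, and the only substantive check is that the weights $(4,-5,1)$ simultaneously clear $m_1$ and $m_4$ on the left and annihilate $a_{(2,0)}$ on the right.
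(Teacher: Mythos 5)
Your proposal is correct and is essentially identical to the paper's proof, which also obtains \eqref{eq:2-8} by forming $4\times\eqref{eq:m2-0} + (-5)\times\eqref{eq:m2-1} + 1\times\eqref{eq:m2-2}$; your derivation of the weights $(4,-5,1)$ by forcing the $m_1$ and $m_4$ coefficients to vanish, together with the consistency checks on $m_3$, $m_4$, and $a_{(2,0)}$, is just the worked-out version of the same one-line computation.
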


\begin{proof}
By calculating $4 \times$ [Equation (\ref{eq:m2-0})] 
$+ (-5) \times$ [Equation (\ref{eq:m2-1})] 
$+ 1 \times$ [Equation (\ref{eq:m2-2})], 
we obtain Equation (\ref{eq:2-8}). 
\end{proof}

\subsubsection{The case where $0 \in \Ev(\cQ(H,f))$} 

In this subsection, we will show the following result.

\begin{prop}\label{prop:gr0ev}
Let $(H,f)$ be a connected vertex-weighted graph with  
$5 \in \Ev(\cQ(H,f)) \subseteq \{0,1,2,3,4,5\}$. 
If $0 \in \Ev(\cQ(H,f))$, 
then $f=0$ and $H$ is one of the four graphs in Figure \ref{fig:H1-5}.  
\end{prop}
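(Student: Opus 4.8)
The plan is to reduce everything to the signless Laplacian of a bipartite graph and then pin down its structure from the trace identities and interlacing. First I would invoke Corollary~\ref{0ev}: since $0 \in \Ev(\cQ(H,f))$ and $(H,f)$ is connected, it follows at once that $f = 0$ and that $H$ is bipartite, with the multiplicity $m_0$ of $0$ equal to $1$. Thus $\cQ(H,f) = Q(H)$, and the whole problem becomes the classification of connected bipartite graphs $H$ whose signless Laplace matrix has integral spectrum contained in $\{0,1,2,3,4,5\}$ with $5$ attained. Next I would apply Lemma~\ref{lem:deg-f-pair}: either $H = K_{1,4} = H_1$ (which is one of the four graphs, and is disposed of immediately), or $H$ has maximum degree at most $3$. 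In the latter case, since a connected bipartite graph of maximum degree at most $2$ is a path or an even cycle, whose signless Laplacian has spectral radius at most $4 < 5$, there must be a vertex of degree $3$, so the maximum degree is exactly $3$ and $a_{(3,0)} \geq 1$.

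At this point, with $f = 0$ (so $a_{(1,1)} = a_{(2,1)} = 0$, and I abbreviate $a_1, a_2, a_3$ for $a_{(1,0)}, a_{(2,0)}, a_{(3,0)}$), the trace identities become the main bookkeeping tool. From Corollary~\ref{cor:tr-8} together with $m_0 = 1$ I obtain $a_1 + a_3 = 8 - 2m_2 - 2m_3$, so $a_1 + a_3 \le 8$ and $a_1 + a_3$ is even; combined with Lemma~\ref{substr}(3), which gives $a_1 \le 2$, this bounds the number of degree-one and degree-three vertices. Writing the cyclomatic number as $|E(H)| - |V(H)| + 1 = (a_3 - a_1)/2 + 1$, I would read off that $H$ carries few independent cycles, which Lemma~\ref{substr}(5) constrains further. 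Because $H$ has maximum degree $3$, its degree-two vertices form internally disjoint paths, so $H$ is a subdivision of a small multigraph $\hat H$ whose vertices are exactly the degree-one and degree-three vertices of $H$; the bound $a_1 + a_3 \le 8$ leaves only finitely many possibilities for $\hat H$.

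It then remains to bound the lengths of the subdivision paths, that is, to bound $a_2$. Here I would use interlacing (Theorem~\ref{0}): the restriction of $Q(H)$ to the $\ell$ internal vertices of such a path is the tridiagonal matrix $2I_\ell + A(P_\ell)$, whose eigenvalues $2 + 2\cos\bigl(\tfrac{j\pi}{\ell+1}\bigr)$ are non-integral and accumulate in $(0,4)$. Since $Q(H)$ has all eigenvalues in $\{0,1,2,3,4,5\}$, the counting form of interlacing (the number of submatrix eigenvalues exceeding a threshold is at most the corresponding number for $Q(H)$) forces $\ell$ to be small: the roughly $\ell/3$ path eigenvalues lying in $(3,4)$ cannot exceed $m_4 + 1$, and $m_4$ is controlled by Proposition~\ref{prop:trace}. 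Combining this with the structural restrictions of Lemma~\ref{substr}(2),(4),(5) reduces the problem to finitely many candidate graphs $H$. Finally I would compute $\Spec(Q(H))$ for each candidate and retain exactly those of spectral radius $5$ with integral spectrum, which should yield precisely $H_1, H_2, H_3, H_4$ of Figure~\ref{fig:H1-5}.

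The step I expect to be the main obstacle is the bounding of the subdivision path lengths and the accompanying enumeration. The trace identities control $a_1$ and $a_3$ but say nothing directly about $a_2$, so one must extract genuine spectral information from the path submatrices and combine it carefully with the combinatorial lemmas in order to cut the candidate list down to a finite, checkable family without resorting to a computer.
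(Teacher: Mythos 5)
Your opening reductions match the paper exactly (Corollary~\ref{0ev} gives $f=0$, $H$ bipartite, $m_0=1$; Lemma~\ref{lem:deg-f-pair} disposes of $K_{1,4}$; the trace identities give $a_{(1,0)}+a_{(3,0)}\le 8$ even, $a_{(3,0)}\ge 1$, $m_2+m_3\le 3$), and the subdivision picture is a reasonable framing. But your key step --- bounding the subdivision path lengths --- contains a genuine gap. You assert that ``$m_4$ is controlled by Proposition~\ref{prop:trace}''; it is not. The three trace equations in the unknowns $m_1,m_2,m_3,m_4,a_{(2,0)}$ leave a one-parameter family of solutions even after $m_2,m_3,a_{(1,0)},a_{(3,0)}$ are fixed: subtracting the first equation from the second gives $3m_4 = a_{(2,0)} + 2a_{(3,0)} - m_2 - 2m_3 - 3$, so $m_4$ grows like $a_{(2,0)}/3$ (this is exactly the free parameter $t$ in the paper's Table~\ref{table:para}, where $m_4 = t + c$ and $a_{(2,0)} = 3t + c'$). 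Now count your path eigenvalues: a path with $\ell$ internal degree-two vertices contributes $\lceil(\ell+1)/3\rceil - 1 \approx \ell/3$ eigenvalues of $2I_\ell + A(P_\ell)$ exceeding $3$, and summing over all paths gives roughly $a_{(2,0)}/3$; your interlacing inequality compares this with $m_4 + 1 \approx a_{(2,0)}/3 + O(1)$. The two sides grow at the same rate $1/3$, so the inequality is asymptotically tight and forces nothing: a single arbitrarily long subdivision path passes your test. The threshold $3$ is exactly the wrong one. (The idea is salvageable with threshold $2$: each path contributes $\approx \ell/2$ eigenvalues above $2$, while $Q(H)$ has only $m_3 + m_4 + 1 \le a_{(2,0)}/3 + O(1)$ such eigenvalues, which does yield a linear contradiction and hence a bound on $a_{(2,0)}$ --- but the resulting bound is of order several dozen vertices, leaving an enumeration far too large for the hand check you promise at the end.)

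The paper closes this gap by a completely different mechanism: since $\cQ(H,f)=Q(H)$ has at most six distinct eigenvalues, the diameter satisfies $D\le 5$ by Proposition~\ref{prop:diam}, and in most parameter cases of Table~\ref{table:para} several $m_i$ vanish, forcing $D\le 3$ or $4$; a bipartite graph of maximum degree $3$ then has at most $2+2+4+8=16$ vertices within distance $D-1$ of a suitable edge, which caps $n$ case by case. This upper bound is played against lower bounds of the form $n > \tfrac{5}{2}a_{(3,0)}$ from the quotient-matrix Lemma~\ref{lemjack}/Proposition~\ref{propjack}, and against the divisibility constraint $n = 2^a 3^b 5^c$ from the spanning-tree count (Corollary~\ref{cor:235}, using $\Spec(L(H))=\Spec(Q(H))$ for bipartite $H$), squeezing each case to at most a handful of candidates checkable by hand. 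To repair your proof you would need to replace your $(3,4)$-counting either by the corrected threshold-$2$ count together with substantial extra pruning, or, more efficiently, by the diameter/ball-growth and divisibility arguments the paper uses; as written, your argument never bounds $a_{(2,0)}$, so the ``finitely many candidates'' you invoke at the end do not exist.
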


\begin{figure}[h]
\begin{center}
\begin{tabular}{cccc}
\includegraphics[scale=0.5]
{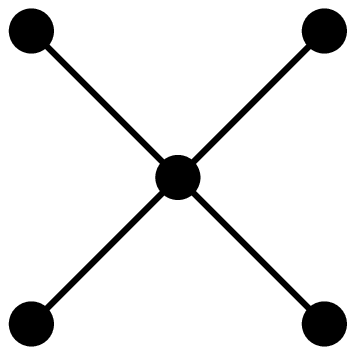} &
\includegraphics[scale=0.5]
{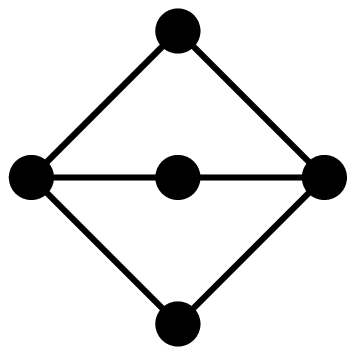} &
\includegraphics[scale=0.5]
{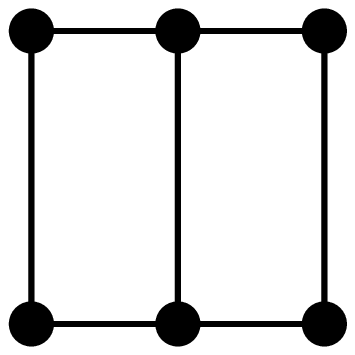} &
\includegraphics[scale=0.5]
{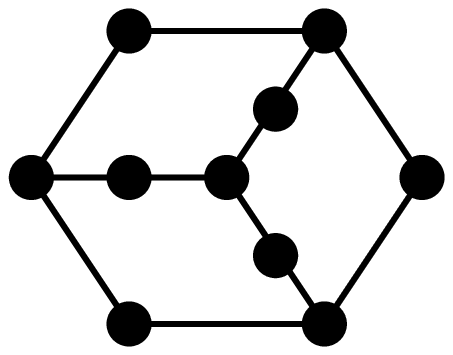} \\
$H_1$ &
$H_2$ &
$H_3$ &
$H_4$ \\
\end{tabular}
\end{center}
\caption{The graphs $H_1, H_2, H_3, H_4$}
\label{fig:H1-5}
\end{figure}

Let $(H,f)$ be a connected vertex-weighted graph
such that 
$\{0, 5\} \subseteq \Ev(\cQ(H,f)) \subseteq \{0,1,2,3,4,5\}$. 
Then, by Corollary \ref{0ev}, 
$H$ is bipartite and $f=0$. 
Although the result in this case now follows from \cite{SR}, 
we will give a computer-free proof.

\begin{lem}\label{lemjack}
Let $H$ be a connected graph with maximum degree at most $3$ 
such that 
$Q(H) \subseteq \{\theta \in \bR \mid \theta \leq 5 \}$. 
Let $H'$ be an induced subgraph of $H$ 
which has no vertex of degree $1$. 
Let $a'_i$ be the number of vertices of $H'$ of degree $i$ for $i=2,3$. 
Let $m$ be the number of edges 
with one endpoint a vertex of degree $2$ 
and the other endpoint a vertex of degree $3$. 
If $a'_3 \neq 0$ and $a'_2 \neq 0$, then 
$1 + \frac{m}{a'_2} \leq \frac{m}{a'_3}$ 
and equality implies that $5 \in \Ev(Q(H))$ 
and $H=H'$.
\end{lem}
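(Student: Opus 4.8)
The plan is to transfer the spectral hypothesis from $Q(H)$ down to $Q(H')$ and then to read off the inequality from a $2\times 2$ quotient matrix. First I would show that $\theta_{\max}(Q(H'))\le 5$. Writing $Q(H)|_{H'}=Q(H')+D$, where $D$ is the nonnegative diagonal matrix recording, for each $x\in V(H')$, the number of edges of $H$ leaving $H'$ at $x$, the quadratic form of $D$ is nonnegative, so the Rayleigh quotient gives $\theta_{\max}(Q(H'))\le\theta_{\max}(Q(H)|_{H'})$; and since $Q(H)|_{H'}$ is a principal submatrix of $Q(H)$, the Interlacing Theorem (Theorem~\ref{0}) yields $\theta_{\max}(Q(H)|_{H'})\le\theta_{\max}(Q(H))\le 5$. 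Deleting the isolated vertices of $H'$ changes none of $a'_2,a'_3,m$ and only removes eigenvalues $0$ from $Q(H')$, so I may assume $V(H')=D_2\cup D_3$, where $D_i$ is the set of degree-$i$ vertices of $H'$.

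Next I would form the quotient matrix $B:=B_{Q(H'),\pi}$ for $\pi=\{D_2,D_3\}$. Using the handshake identities $2a'_2=2e_{22}+m$ and $3a'_3=2e_{33}+m$ (with $e_{ii}$ the number of edges inside $D_i$) to evaluate the diagonal blocks, one gets
\[
B=\begin{pmatrix} 4-\frac{m}{a'_2} & \frac{m}{a'_2}\\ \frac{m}{a'_3} & 6-\frac{m}{a'_3}\end{pmatrix}.
\]
By Proposition~\ref{qint} the eigenvalues of $B$ interlace those of $Q(H')$, so $\theta_{\max}(B)\le\theta_{\max}(Q(H'))\le 5$. The characteristic polynomial is $\chi(\lambda)=\lambda^2-(10-\frac{m}{a'_2}-\frac{m}{a'_3})\lambda+(24-\frac{6m}{a'_2}-\frac{4m}{a'_3})$, and a direct evaluation gives $\chi(5)=\frac{m}{a'_3}-\frac{m}{a'_2}-1$. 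Since $a'_2,a'_3\neq 0$ forces $m\ge 1$ (otherwise $B$ would be diagonal with an entry $6>5$, contradicting $\theta_{\max}(B)\le 5$), the axis of symmetry $5-\tfrac12(\frac{m}{a'_2}+\frac{m}{a'_3})$ of the upward parabola $\chi$ lies strictly below $5$; hence $\theta_{\max}(B)\le 5$ is equivalent to $\chi(5)\ge 0$, which is exactly $1+\frac{m}{a'_2}\le\frac{m}{a'_3}$.

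For the equality case I would argue rigidity. Equality means $\chi(5)=0$, i.e. $\theta_{\max}(B)=5$, and the sandwich $5=\theta_{\max}(B)\le\theta_{\max}(Q(H'))\le 5$ forces $5\in\Ev(Q(H'))$. Let $N$ be the matrix of size $|V(H)|$ obtained by placing $Q(H')$ in the block indexed by $V(H')$ and $0$ elsewhere; because $H'$ is induced and $\deg_{H'}(x)\le\deg_H(x)$, one checks $O\le N\le Q(H)$ entrywise, while $5\in\Ev(N)$. Applying the comparison part of the Perron--Frobenius Theorem (Theorem~\ref{perfro}) to the irreducible matrix $Q(H)$ then gives $\rho(Q(H))=5$, so $5\in\Ev(Q(H))$, together with $N=Q(H)$. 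The equality $N=Q(H)$ forces every vertex of $H$ outside $V(H')$ to have degree $0$ and every vertex of $H'$ to have equal degree in $H$ and in $H'$; as $H$ is connected, this is possible only if $V(H)=V(H')$, whence $H=H'$. The main obstacle is precisely this equality step: the inequality itself is a clean eigenvalue estimate, but extracting $H=H'$ relies on the rigidity clause of Perron--Frobenius, and care is needed to set up the zero-padded comparison matrix $N$ and to exclude extra vertices or incident edges of $H$.
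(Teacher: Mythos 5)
Your proof is correct and takes essentially the same route as the paper's: the identical $2\times 2$ quotient matrix for the degree partition of $H'$, interlacing to get $\theta_{\max}(B)\le 5$ (hence the inequality $1+\frac{m}{a'_2}\le\frac{m}{a'_3}$), and the rigidity clause of the Perron--Frobenius Theorem to force $5\in\Ev(Q(H))$ and $H=H'$ in the equality case. You merely spell out two steps the paper elides, namely the reduction $\theta_{\max}(Q(H'))\le\theta_{\max}(Q(H))$ via $Q(H)|_{H'}=Q(H')+D$ with $D\ge O$, and the zero-padded comparison matrix $N$ needed to invoke the $O\le N\le M$ part of Theorem~\ref{perfro}.
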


\begin{proof}
Let $A'_i$ be the set of vertices of $H'$ of degree $i$ for $i=2,3$.  
Consider the quotient matrix of $Q(H)$ 
\[
B = \begin{pmatrix}
6 - \frac{m}{a_3} & \frac{m}{a_3} \\
\frac{m}{a_2} & 4-\frac{m}{a_2}
\end{pmatrix}
\] 
with respect to the partition $\{ A'_3, A'_2 \}$. 
By interlacing (Proposition \ref{qint}), we obtain that $B$ 
has largest eigenvalue at most $5$. 
This is equivalent to the inequality 
$1 + \frac{m}{a_2} \leq \frac{m}{a_3}$. 
Now equality means that the largest eigenvalue of $Q(H')$ is equal to $5$, 
and hence by Perron-Frobenius Theorem (Theorem \ref{perfro}) 
we obtain $H=H'$. 
This shows the lemma. 
\end{proof}

Let $H'$ be the (induced) subgraph of $H$ 
obtained by consecutively removing degree $1$ vertices from $H$. 
Since $H$ is connected, $H'$ is also connected. 
Since the vertices of $H$ have degree at most $3$, 
the vertices of $H'$ have degree $2$ or $3$. 
Let 
$A'_{(3,0)} := \{x \in V(H') \mid \deg_{H'}(x) = 3 \}$,
$A'_{(2,0)} := \{y \in V(H') \mid \deg_{H'}(y) = 2 \}$,  
$a'_{(3,0)} := |A'_{(3,0)}|$, $a'_{(2,0)} := |A'_{(2,0)}|$, 
and 
$m$ be the number of edges of $H'$ 
with exactly one endpoint in $A'_{(3,0)}$. 
Note that $m$ is also the number of edges of $H'$ 
with exactly one endpoint in $A'_{(2,0)}$.  

\begin{prop}\label{propjack}
Let $H$ be a connected graph with 
$5 \in \Ev(Q(H)) \subseteq \{0,1,2,3,4,5\}$. 
Then the following hold: 
\begin{itemize}
\item[{\rm (1)}] 
If $x,y \in A'_{(3,0)}$, then $d_{H'}(x,y) \leq 3$. 
\item[{\rm (2)}] 
If $a'_{(3,0)} \neq 0$ and $a'_{(2,0)} \neq 0$, then 
$1 + \frac{m}{a'_{(2,0)}} \leq \frac{m}{a'_{(3,0)}}$ 
and equality implies $H = H'$. 
\item[{\rm (3)}] 
If $\frac{m}{a'_{(2,0)}} = 2$, 
then $\frac{m}{a'_{(3,0)}} = 3$ and $H' = H$, 
and moreover $H =H_2(= K_{2,3})$ or $H =H_4$. 
\end{itemize}
\end{prop}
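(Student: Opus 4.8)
Throughout, the plan is to exploit two standing facts. First, since we are in the case $0\in\Ev(\cQ(H,f))$, Corollary \ref{0ev} forces $H$ bipartite and $f=0$, so $\cQ(H,f)=Q(H)$ and the hypothesis reads: $Q:=Q(H)$ has all eigenvalues in $\{0,1,2,3,4,5\}$, $\rho(Q)=5$, and (by Theorem \ref{perfro}) $m_5=1$; Lemma \ref{lem:deg-f-pair} gives maximum degree $3$. Second, I will repeatedly use an \emph{integrality step}: if some principal submatrix of $Q$ has two eigenvalues strictly larger than $4$, then by the Interlacing Theorem (Theorem \ref{0}) $\theta_2(Q)>4$, and integrality upgrades this to $\theta_2(Q)\ge 5=\rho(Q)$, i.e. $m_5\ge 2$, a contradiction. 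I also use that a shortest $H$-path between two vertices of the $2$-core $H'$ never enters a pendant tree, so $d_H(x,y)=d_{H'}(x,y)$ for $x,y\in V(H')$.

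For (1) I would argue by contradiction. Suppose $d_{H'}(x,y)\ge 4$; then $d_H(x,y)\ge 4$, so the closed neighbourhoods $N[x]$ and $N[y]$ in $H$ are disjoint and mutually non-adjacent. Since $x$ has degree $3$ in $H'$ it has degree $3$ in $H$, and each of its three neighbours lies in $H'$ and therefore has $H$-degree at least $2$. Hence $Q|_{N[x]}$ is the $4\times 4$ matrix with the claw adjacency pattern and diagonal $(3,d_a,d_b,d_c)$ with $d_a,d_b,d_c\ge 2$; by diagonal monotonicity its largest eigenvalue is at least that of the $(3,2,2,2)$-version, namely $(5+\sqrt{13})/2>4$. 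The same holds at $y$, and since $N[x]\cup N[y]$ induces a block-diagonal principal submatrix, its two largest eigenvalues both exceed $4$. The integrality step then gives $m_5\ge 2$, a contradiction, so $d_{H'}(x,y)\le 3$.

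For (2) I would run the quotient-matrix computation of Lemma \ref{lemjack}, but carried out inside $H'$ itself. Partitioning $V(H')$ into $A'_{(3,0)}$ and $A'_{(2,0)}$, the average row-sums of $Q(H')$ give the quotient matrix $\left(\begin{smallmatrix}6-m/a'_{(3,0)} & m/a'_{(3,0)}\\ m/a'_{(2,0)} & 4-m/a'_{(2,0)}\end{smallmatrix}\right)$, whose eigenvalues interlace those of $Q(H')$ by Proposition \ref{qint}. Since $Q(H')\le Q(H)$ entrywise (degrees can only drop when passing to $H'$), the monotonicity part of Theorem \ref{perfro} gives $\rho(Q(H'))\le 5$, so the top quotient eigenvalue is $\le 5$; evaluating the characteristic polynomial at $5$ shows this is equivalent to $1+m/a'_{(2,0)}\le m/a'_{(3,0)}$. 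Equality makes $5$ an eigenvalue of $Q(H')$, forcing $\rho(Q(H'))=5=\rho(Q)$, and then the $N\le M$ clause of Theorem \ref{perfro} forbids $H'$ from being a proper induced subgraph, i.e. $H=H'$.

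For (3), put $m/a'_{(2,0)}=2$. As $m>0$ both classes are non-empty, so (2) gives $m/a'_{(3,0)}\ge 3$; since a degree-$3$ vertex has at most $3$ neighbours, $m/a'_{(3,0)}\le 3$, whence $m/a'_{(3,0)}=3$ and, by the equality clause of (2), $H=H'$. Averaging now forces every degree-$3$ vertex to send all three edges to $A'_{(2,0)}$ and every degree-$2$ vertex to send both edges to $A'_{(3,0)}$; thus $H$ is the subdivision of a loopless $3$-regular multigraph $G$ on vertex set $A'_{(3,0)}$, each degree-$2$ vertex being a subdivided edge. By (1) any two vertices of $A'_{(3,0)}$ lie at distance $2$ in $H$, i.e. every two vertices of $G$ are adjacent; a $3$-regular multigraph with this property exists only on $2$ vertices (the triple edge, giving $H=K_{2,3}$) or on $4$ vertices ($K_4$, giving $H=H_4$), the cases of $3$ and $\ge 5$ vertices failing by parity and by the degree bound. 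I expect the least obvious point to be the integrality step in (1)—in particular the realisation that one must read the diagonal of the submatrix off the \emph{ambient} degrees in $H$, not the degrees inside the claw, so that the local eigenvalue clears $4$—whereas the structural reconstruction in (3) is routine once the biregular, subdivision form is in hand.
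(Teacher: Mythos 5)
Your argument is correct, and for (1) and (2) it is essentially the paper's own proof. The paper also proves (1) by considering the subgraph induced on $x$, $y$ and their neighbours (two vertex-disjoint claws once the distance is at least $4$) together with the Perron--Frobenius Theorem and interlacing; and (2) is exactly Lemma \ref{lemjack}, whose proof is the same quotient-matrix computation you carry out, including the evaluation of the characteristic polynomial at $5$ and the equality clause via the $N\le M$ part of Theorem \ref{perfro}. Your two added precisions---that the diagonal of the principal submatrix must carry the \emph{ambient} $H$-degrees, which is what lifts the claw eigenvalue to $(5+\sqrt{13})/2>4$, and that $d_H=d_{H'}$ on $V(H')$ because each deleted pendant tree attaches to the $2$-core at a single vertex---are both genuinely needed and are left implicit in the paper's terse proof. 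In (3), however, your endgame takes a different route: the paper combines the trace identity of Corollary \ref{cor:tr-8} with the interlacing bound $3\ge m_2\ge a_{(2,0)}-a_{(3,0)}=\frac{n}{5}$ to restrict $n\in\{5,10,15\}$, eliminates $n=15$ by multiplicity counting, and only then uses (1) to identify $H_4$ when $n=10$; you instead note that the two averaging equalities force $H$ to be the once-subdivided loopless cubic multigraph $G$ on $A'_{(3,0)}$, so that (1), together with the evenness of distances between branch vertices, makes every two vertices of $G$ adjacent, and then $3$-regularity plus parity leave only the triple edge (giving $H=K_{2,3}$) and $K_4$ (giving $H=H_4$). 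Your version is more elementary and self-contained---beyond $m_5=1$ it uses no eigenvalue multiplicities at all---whereas the paper's is shorter in context because Corollary \ref{cor:tr-8} is already in place for the surrounding case analysis. One cosmetic slip: your preamble invokes Corollary \ref{0ev} to get ``$H$ bipartite and $f=0$,'' but the hypothesis of the proposition does not include $0\in\Ev(Q(H))$ (it is stated directly for $Q(H)$); fortunately none of your three arguments actually uses bipartiteness, so nothing is harmed.
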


\begin{proof} 
(1) As $Q(K_{1,3})$ has spectral radius $4$, it follows immediately 
from the Perron-Frobenius Theorem (Theorem \ref{perfro}) 
and interlacing (Theorem \ref{0}), 
by considering the subgraph induced on $x$ and $y$ and their neighbours. \\ 
(2) Immediately from Lemma \ref{lemjack}. \\ 
(3) If $\frac{m}{a'_{(2,0)}} = 2$, then by (2), 
we have $\frac{m}{a'_{(3,0)}} = 3$ and $H = H'$. 
As, by interlacing, $3 \geq m_2 \geq a_{(2,0)} - a_{(3,0)} = \frac{n}{5}$, 
we obtain $n$ is one of $5, 10, 15$. But, if $n=15$ then $m_2 =3$ 
and hence $a_{(3,0)} \leq 2,$ a contradiction. 
If $n=10$ then, by (1), any two vertices of degree 3 have a common neighbour. 
This implies $H = H_4$. 
If $n = 5$, it is completely clear.
\end{proof}

Recall that a {\it spanning tree} 
of a connected graph $H$ with $n$ vertices 
is a connected subgraph of $H$ with $(n-1)$ edges and no cycle. 

\begin{prop}[{cf. \cite[Lemma 13.2.4]{god}}] 
Let $H$ be a graph with $n$ vertices and 
$0=\mu_1 \leq \mu_2 \leq \cdots \leq \mu_n$ be 
the eigenvalues of the Laplace matrix $L(H)$ of $H$. 
Then, the number of spanning trees of $H$ is equal to 
$\frac{1}{n}\mu_2 \mu_3 \cdots \mu_n$. 
\end{prop}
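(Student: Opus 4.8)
The plan is to reduce this to the classical Matrix-Tree Theorem of Kirchhoff, which asserts that every principal cofactor of $L(H)$ equals the number $\tau(H)$ of spanning trees of $H$, and then to translate that cofactor statement into the spectral language of the eigenvalues $\mu_i$. The spectral reformulation is pure linear algebra once Kirchhoff's theorem is in hand.

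First I would record that $\mu_1 = 0$: since every row sum of $L(H) = \Delta(H) - A(H)$ vanishes, the all-ones vector lies in the kernel, so $L(H)$ is singular. Writing the characteristic polynomial as $\det(xI - L(H)) = \prod_{i=1}^n (x - \mu_i) = x \prod_{i=2}^n (x-\mu_i)$, I would then compare the coefficient of $x^1$ on both sides. On the eigenvalue side this coefficient is $(-1)^{n-1} e_{n-1}(\mu_1,\dots,\mu_n)$, where $e_{n-1}$ denotes the elementary symmetric polynomial; because $\mu_1 = 0$, the only surviving term of $e_{n-1}$ is the one omitting $\mu_1$, so it equals $(-1)^{n-1}\mu_2\mu_3\cdots\mu_n$.

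The other expression for the same coefficient is combinatorial. For any $n \times n$ matrix, the coefficient of $x^k$ in $\det(xI - L(H))$ is $(-1)^{n-k}$ times the sum of all $(n-k)\times(n-k)$ principal minors of $L(H)$. Taking $k=1$ gives $(-1)^{n-1}\sum_{i=1}^n \det(L(H)_i)$, where $L(H)_i$ denotes $L(H)$ with its $i$-th row and column deleted. Each $\det(L(H)_i)$ is precisely a principal cofactor of $L(H)$, so by the Matrix-Tree Theorem it equals $\tau(H)$; summing the $n$ of them yields $n\,\tau(H)$. Equating the two expressions and cancelling $(-1)^{n-1}$ gives $\mu_2\cdots\mu_n = n\,\tau(H)$, which is the claim.

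The genuine ingredient, and hence the main obstacle if one wants a self-contained argument, is the Matrix-Tree Theorem itself. To prove it from scratch I would orient the edges of $H$ arbitrarily, form the signed incidence matrix $N$ (so that $L(H)=NN^T$), note that $L(H)_i = N_i N_i^T$ where $N_i$ drops the row for vertex $i$, and apply the Cauchy-Binet formula $\det(N_i N_i^T) = \sum_{S} \det(N_i[S])^2$, the sum running over the $(n-1)$-edge subsets $S$. The determinant $\det(N_i[S])$ equals $\pm 1$ when $S$ is a spanning tree and $0$ otherwise, so the sum counts spanning trees. Since the paper already cites Godsil--Royle for this lemma, however, I would be content to invoke the Matrix-Tree Theorem directly and keep the argument to the short spectral reduction above.
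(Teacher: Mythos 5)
Your proof is correct. The paper does not actually prove this proposition---it states it with a pointer to Godsil--Royle (Lemma 13.2.4) and moves on---and your argument (reading off the coefficient of $x$ in $\det(xI - L(H))$ both as $(-1)^{n-1}\mu_2\cdots\mu_n$, using $\mu_1=0$, and as $(-1)^{n-1}$ times the sum of the $n$ principal cofactors, each equal to the number of spanning trees by the Matrix-Tree Theorem, itself provable via Cauchy--Binet applied to the signed incidence matrix) is exactly the standard proof in that cited source, so it matches the intended justification in every respect.
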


As we already have seen in the proof of Proposition \ref{propsemi}, 
if a graph $H$ is bipartite 
then the Laplace matrix $L(H)$ is similar 
to the signless Laplace matrix $Q(H)$ and hence $\Spec(L(H)) = \Spec(Q(H))$.
As a consequence we have:

\begin{cor}\label{cor:235}
Let $H$ be a connected bipartite graph with  
$\Ev(Q(H)) \subseteq \{0,1,2,3,4,5\}$. 
Then, the number of vertices of $H$ has 
a form $2^a \cdot 3^b \cdot 5^c$ 
with $a,b, c \in \bZ_{\geq 0}$ satisfying 
$0 \leq a \leq m_2 + 2m_4$, $0 \leq b \leq m_3$ and $0 \leq c \leq m_5$. 
\end{cor}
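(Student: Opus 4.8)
The plan is to combine the two facts stated immediately before the corollary: the spectral identity $\Spec(L(H)) = \Spec(Q(H))$ valid for bipartite $H$, and the Matrix-Tree formula expressing the number of spanning trees of $H$ through the Laplace eigenvalues. The driving observation is that the number of spanning trees is a positive \emph{integer}, so $n$ must divide the product of the nonzero Laplace eigenvalues; since, by the eigenvalue hypothesis, that product is built only from the primes $2,3,5$, this forces $n$ to have the asserted shape.

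First I would pin down the multiplicities. Because $H$ is connected and bipartite, the identity $\Spec(L(H)) = \Spec(Q(H))$ places all Laplace eigenvalues in $\{0,1,2,3,4,5\}$, and connectedness makes the eigenvalue $0$ of $L(H)$ simple; write $0 = \mu_1 < \mu_2 \leq \cdots \leq \mu_n$. Letting $m_r$ denote the multiplicity of $r$ in this spectrum, I would invoke the Matrix-Tree proposition to obtain
\[
\tau(H) = \frac{1}{n}\,\mu_2 \mu_3 \cdots \mu_n,
\]
where $\tau(H)$ denotes the number of spanning trees of $H$.

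Next I would factor the product $\mu_2 \cdots \mu_n$ by grouping equal eigenvalues:
\[
\mu_2 \cdots \mu_n = 1^{m_1}\,2^{m_2}\,3^{m_3}\,4^{m_4}\,5^{m_5} = 2^{m_2 + 2 m_4}\,3^{m_3}\,5^{m_5},
\]
using $4 = 2^2$ so that each $4$-eigenvalue contributes $2$ to the exponent of $2$, while the $1$-eigenvalues contribute nothing. Substituting into the Matrix-Tree formula yields $n\,\tau(H) = 2^{m_2 + 2 m_4}\,3^{m_3}\,5^{m_5}$. As $\tau(H)$ is a positive integer, $n$ divides the right-hand side, whence $n = 2^a\,3^b\,5^c$ with $0 \leq a \leq m_2 + 2 m_4$, $0 \leq b \leq m_3$, and $0 \leq c \leq m_5$, as claimed.

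There is no genuine obstacle here once the two quoted results are in hand. The only points demanding care are the exponent bookkeeping (remembering that $m_4$ enters the power of $2$ with weight two) and the conceptual step that the integrality of the spanning-tree count is precisely what turns the eigenvalue product into a divisibility statement for $n$. The one hypothesis to verify explicitly is that $\mu_1 = 0$ is simple, which is immediate from the connectedness of $H$.
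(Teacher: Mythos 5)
Your proof is correct and follows exactly the route the paper intends: since $H$ is bipartite, $\Spec(L(H))=\Spec(Q(H))$, so the Matrix-Tree formula gives $n\,\tau(H)=2^{m_2+2m_4}\,3^{m_3}\,5^{m_5}$, and the integrality of the spanning-tree count $\tau(H)$ forces $n$ to divide this product. The paper states the corollary as an immediate consequence of the two preceding facts without writing out the details, and your argument (including the simplicity of the eigenvalue $0$ from connectedness and the weight-two contribution of each eigenvalue $4$) is precisely the intended proof.
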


\begin{proof}[Proof of Proposition \ref{prop:gr0ev}]
As $f = 0$, we have $a_{(1,1)}=a_{(2,1)}=0$ and $\cQ(H,f)=Q(H)$. 
Note that $n:=|V(H)|=a_{(1,0)} + a_{(2,0)} + a_{(3,0)}$. 
Since $H$ is bipartite, we have $m_0=1$ by Proposition \ref{propsemi}. 
By Corollary \ref{cor:tr-8}, we have 
\begin{equation}\label{eq:8}
-2m_2 -2m_3+8= a_{(1,0)} + a_{(3,0)}. 
\end{equation}
Since $m_2$ and $m_3$ are nonnegative, $a_{(1,0)} + a_{(3,0)} \leq 8$. 
Moreover the nonnegative integer $a_{(1,0)} + a_{(3,0)}$ must be even, 
and so 
$a_{(1,0)} + a_{(3,0)} \in \{0,2,4,6,8\}$. 
If $a_{(3,0)}=0$, then $H$ has maximum degree at most $2$ and hence 
all the row sums of $Q(H)$ are at most $4$, 
so the largest eigenvalue of $Q(H)$ is at most $4$. 
Therefore 
\[
a_{(3,0)} \neq 0 \quad \mbox{ and } \quad 
a_{(1,0)} + a_{(3,0)} \in \{2,4,6,8\}. 
\]
By Equation (\ref{eq:8}), we obtain
\[
0 \leq m_2 + m_3 \leq 3. 
\]

By Proposition \ref{prop:trace}, we have 
\begin{eqnarray*}
m_1 + m_2 + m_3 + m_4 + 2 &=& a_{(1,0)} + a_{(2,0)} + a_{(3,0)}, \\
m_1 + 2 m_2 + 3 m_3 + 4 m_4 + 5 &=& a_{(1,0)} + 2 a_{(2,0)} + 3 a_{(3,0)}, \\
m_1 + 4 m_2 + 9 m_3 + 16 m_4 + 25 
&=& a_{(1,0)} + 6 a_{(2,0)} + 12 a_{(3,0)}. 
\end{eqnarray*}

Since $a_{(1,0)} \leq 2$ by Lemma \ref{substr} (3), 
there are 26 posibilities for $(m_2,m_3,a_{(1,0)},a_{(3,0)})$. 
By solving the system of the above equations 
with given parameters 
$m_2$, $m_3$, $a_{(1,0)}$, and $a_{(3,0)}$, 
we obtain 
$(m_2, m_3, a_{(1,0)}, a_{(3,0)};$ $m_1, m_4, a_{(2,0)})$ 
as in Table \ref{table:para}.

\begin{table}[h]
\begin{center}
\begin{tabular}{|l|cccc | lll || c |}
\hline
Cases  & $m_2$ & $m_3$ & $a_{(1,0)}$ & $a_{(3,0)}$ & 
$m_1$  & $m_4$ & $a_{(2,0)}$ & $|V(H)|$\\
\hline
\hline
(A0)   &  0  &  0  &  0  &  8        & 2t+3 & t+5 & 3t+2      & 3t+10  \\
(B0)   &  1  &  0  &  0  &  6        & 2t+1 & t+3 & 3t+1      & 3t+7    \\
(C0)   &  0  &  1  &  0  &  6        & 2t+2 & t+3 & 3t        & 3t+8   \\
(D0)   &  2  &  0  &  0  &  4        & 2t+1 & t+2 & 3t+3      & 3t+7    \\
(E0)   &  1  &  1  &  0  &  4        & 2t   & t+1 & 3t+1      & 3t+5     \\
(F0)   &  0  &  2  &  0  &  4        & 2t+1 & t+1 & 3t+2      & 3t+6    \\
(G0)   &  3  &  0  &  0  &  2        & 2t+1 & t+1 & 3t+5      & 3t+7   \\
(H0)   &  2  &  1  &  0  &  2        & 2t   & t   & 3t+3      & 3t+5    \\
(I0)   &  1  &  2  &  0  &  2        & 2t+1 & t   & 3t+4      & 3t+6    \\
(J0)   &  0  &  3  &  0  &  2        & 2t+2 & t   & 3t+5      & 3t+7     \\
\hline
(A1)   &  0  &  0  &  1  &  7        & 2t+3 & t+4 & 3t+1      & 3t+9    \\
(B1)   &  1  &  0  &  1  &  5        & 2t+1 & t+2 & 3t        & 3t+6    \\
(C1)   &  0  &  1  &  1  &  5        & 2t+2 & t+2 & 3t+1      & 3t+7     \\
(D1)   &  2  &  0  &  1  &  3        & 2t+1 & t+1 & 3t+2      & 3t+6    \\
(E1)   &  1  &  1  &  1  &  3        & 2t   & t   & 3t        & 3t+4    \\
(F1)   &  0  &  2  &  1  &  3        & 2t+1 & t   & 3t+1      & 3t+5     \\
(G1)   &  3  &  0  &  1  &  1        & 2t+1 & t   & 3t+4      & 3t+6       \\
(H1)   &  2  &  1  &  1  &  1        & 2t+2 & t   & 3t+5      & 3t+7     \\
(I1)   &  1  &  2  &  1  &  1        & 2t+3 & t   & 3t+6      & 3t+8      \\
(J1)   &  0  &  3  &  1  &  1        & 2t+4 & t   & 3t+7      & 3t+9     \\
\hline
(A2)   &  0  &  0  &  2  &  6        & 2t+3 & t+3 & 3t        & 3t+8    \\
(B2)   &  1  &  0  &  2  &  4        & 2t+3 & t+2 & 3t+2      & 3t+8   \\
(C2)   &  0  &  1  &  2  &  4        & 2t+2 & t+1 & 3t        & 3t+6    \\
(D2)   &  2  &  0  &  2  &  2        & 2t+1 & t   & 3t+1      & 3t+5    \\
(E2)   &  1  &  1  &  2  &  2        & 2t+2 & t   & 3t+2      & 3t+6    \\
(F2)   &  0  &  2  &  2  &  2        & 2t+3 & t   & 3t+3      & 3t+7    \\
\hline
\end{tabular}
\end{center}
\caption{Possible parameters}
\label{table:para}
\end{table}

Recall that the diameter $D$ of a graph $H$ is at most 
the number of distinct eigenvalues 
of $Q(H)$ minus one. 
Therefore $D$ is at most $5$. 
By Lemma \ref{substr} (3), 
we consider the following three cases: 


\begin{description}
\item[{\bf Case 1:}] $a_{(1,0)} = 0$. 
\end{description}

By Proposition \ref{propjack} (3), 
we have either $H=H_2$ or $H=H_4$ or 
there exists an edge $xy$ such that 
both $x$ and $y$ have degree two 
and $n:=|V(H)| > \frac{5}{2} a_{(3,0)}$. 
So we may assume that there exists an edge $xy$ 
such that both $x$ and $y$ have degree two. 
As $H$ is bipartite with diameter $D$ at most five, 
any vertex of $H$ lies at distance at most $D-1$ to the edge $xy$. 
For $D=3$ we obtain $n \leq 2 + 2 + 4 = 8$, 
for $D=4$ we obtain $n \leq 2 + 2 + 4 + 8 = 16$, 
and for $D=5$ we obtain $n \leq 32$ in this way. 
If $a_{(3,0)} \geq 6$, 
then $n > \frac{5}{2} a_{(3,0)} \geq 15$. 
But in the case (A0) we have $D \leq 3$, 
and in the cases (B0) and (C0) we have $D \leq 4$. 
So if $a_{(3,0)} \geq 6$ then $a_{(3,0)} = 6$ and $n=16$ 
and we have case (B0) with $t = 3$. 
But in order to obtain $n=16$ in case of (B0) 
we need four edges in side $A_{(3,0)}$. This in turn implies 
(by Proposition \ref{propjack}) 
that $n  \geq 21$, a contradiction. 
So $a_{(3,0)} \leq 4$. \\
If $a_{(3,0)} = 4$, then (as $n > \frac{5}{2} a_{(3, 0)}$) $n > 10$, 
so $n \geq 12$. In the cases (D0) and (F0)
we have $D \leq 4$, so hence $n \leq 2 + 2 + 4 + 6 = 14$ (as $a_{(3,0)} = 4$). 
This implies that case (D0) is not possible 
and in case (F0) we have $t=2$ and $n=12$. 
If there is a path of length three in the subgraph induced by $A_{(2,0)}$ 
then $n \leq 2 + 2 + 2 + 4 =10$, impossible. 
Now we contract all the vertices of $H$ to obtain $H''$ 
and for each edge $e$ of $H''$ 
we denote the number of vertices of degree $2$  contracted on $e$. 
Note that there are four possibilities for $H''$, 
but two of them are rules out by Lemma \ref{substr} (5). 
If $H''$ is $K_4$, then there is at most one edge with weight 
at least two and all weights are at most three. 
So there is only one possibility for $H$ in this case. 
One can easily check that $\cQ(H)$ has not only integral eigenvalues. 
If $H''$ has two cycles of length two, 
then those four edges must have odd weight, 
and one of them must be  of weight three. 
But then one of the other two edges have weight 2, 
and this is impossible by Lemma \ref{substr} (5).\\ 
For case (E0) we have $12 \leq n \leq 2 + 2 +4 +6 +6 = 20$, 
so this case is not possible. 
In cases (G0) and (J0) 
we have $6 \leq n \leq 2+2+4 + 4= 12$, 
and in cases (H0) and (I0) 
we obtain $6 \leq n \leq 2 + 2 + 4 + 4 + 4 = 16$. 
This means that in cases (G0), (H0), and (J0) we have $t=1$, 
and in case (I0) $t=0,1,2,3$, 
and it is easily checked that the only graph occurring is $H_3$.

\begin{description}
\item[{\bf Case 2:}] $a_{(1,0)} = 1$. 
\end{description}

Since $m_3=0$ and $|V(H)| \equiv 0 \pmod 3$, 
the cases (A1), (B1), (D1), and (G1) do not happen 
by Corollary \ref{cor:235}. 
If $D \leq 4$ then $n \leq 1 + 1 + 2 + 4 + 4 =12$ as $H$ is bipartite 
and $a_{(3,0)} = 1$. 
Also $a'_{(3,0)} = a_{(3,0)} - 1$, so in case (C1) 
we obtain $12 \geq n > 10$, and hence this case is not possible. 
In case (E1) we obtain $5 +1 < n \leq 1 +1 + 2 + 4 + 4 + 2 = 14$, 
so $t = 2$ and $n = 10$. 
In case (F1) we obtain $5 + 1 < n \leq 1 + 1 + 2 +4 +2 = 10$, 
so $t=1$ and $n=8$. 
And in both cases it is easy to check that they do not occur.
For cases (H1)-(J1), $n \leq 9$, so $n = 8$ or $n=9$. 
In both cases, it is easy to check there is no graph $H$.

\begin{description}
\item[{\bf Case 3:}] $a_{(1,0)} = 2$. 
\end{description}

It follows from Lemma \ref{substr} that 
the two vertices in $A_{(1,0)}$ are at distance $2$. 
It is easy to check that the diameter three can not occur, and $n \geq 7$. 
This rules out case (A2). 
For diameter $4$, we obtain $n \leq 2 + 1 +1 + 2 +2 = 8$ 
and for $D =5$, $n \leq 10$. 
This means that 
for case (B2) $t=0$ and $n =8$, 
case (C2) can not occur, 
for case (D2) we have $t=1$ and $n=8$, 
for case (E2) $t=1$ and $n=9$, 
and case (F2) is not possible. 
Case (B2) is not possible as if we look at the subgraph $H'$ 
by removing the vertices of degree $2$ 
we see that this subgraph has to have two vertices of degree 3 
and hence at least $6$ vertices. But this means that $n \geq 9$, 
a contradcition. 
It is easy to check that the two remaining cases are not possible. \\

This completes the proof of Proposition \ref{prop:gr0ev}.  
\end{proof}

\subsubsection{The case where $0 \not\in \Ev(\cQ(H,f))$} 

In this subsection we show the following proposition. 

\begin{prop}\label{prop:not0ev}
Let $(H,f)$ be a connected vertex-weighted graph with  
$5 \in \Ev(\cQ(H,f)) \subseteq \{0,1,2,3,4,5\}$. 
If $0 \not\in \Ev(\cQ(H,f))$, 
then $(H,f)$ is one of the five vertex-weighted graphs 
in Figure \ref{fig:H6-9}. 
\end{prop}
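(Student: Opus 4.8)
The plan is to mirror the structure of the preceding case ($0\in\Ev(\cQ(H,f))$) but now exploit the additional rigidity coming from $0\notin\Ev(\cQ(H,f))$. By Corollary~\ref{0ev}, the hypothesis $0\notin\Ev(\cQ(H,f))$ means precisely that $H$ is \emph{not} bipartite or $f\neq 0$; in either case $m_0=0$. First I would feed $m_0=0$ into Corollary~\ref{cor:tr-8} to obtain the key Diophantine constraint
\[
-2m_2-2m_3+4 = a_{(1,0)} + a_{(3,0)} - a_{(1,1)} + 2a_{(2,1)}.
\]
Since the right-hand side is now bounded and the multiplicities $m_2,m_3$ are nonnegative, this forces $a_{(1,0)}+a_{(3,0)}+2a_{(2,1)} \le 4 + a_{(1,1)}$ together with small values of $m_2+m_3$. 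I would combine this with Lemma~\ref{lem:deg-f-pair}, which (after excluding the isolated case $(H,f)=(K_{1,4},0)$) restricts every vertex type to the five possibilities in $A_{(1,0)}\cup A_{(1,1)}\cup A_{(2,0)}\cup A_{(2,1)}\cup A_{(3,0)}$, and with Lemma~\ref{substr}, whose part~(1) now works \emph{in reverse}: since $0\notin\Ev(\cQ(H,f))$, we must have $a_{(1,0)}=0$. That single deduction is a substantial simplification, removing degree-one unweighted vertices entirely.

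Next I would set up the full trace bookkeeping exactly as in Proposition~\ref{prop:gr0ev}: substitute $m_0=0$ and $a_{(1,0)}=0$ into the three equations of Proposition~\ref{prop:trace} and enumerate the finitely many admissible parameter vectors $(m_1,m_2,m_3,m_4,a_{(1,1)},a_{(2,0)},a_{(2,1)},a_{(3,0)})$. The weight-carrying types $A_{(1,1)}$ and $A_{(2,1)}$ are the new feature here, so I would record how $|f|=a_{(1,1)}+a_{(2,1)}$ interacts with the constraints; the inequality from Corollary~\ref{cor:tr-8} caps $a_{(2,1)}$ and $a_{(3,0)}$ sharply. The analogue of the diameter bound (Proposition~\ref{prop:diam}, giving $D\le 5$ from at most six distinct eigenvalues) again lets me bound $|V(H)|$ for each parameter vector by a ``ball-growth'' count around a low-degree edge or a weighted vertex, producing for each case a short finite list of candidate vertex-weighted graphs.

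Having reduced to finitely many candidates, the final step is to eliminate spurious ones and confirm the five survivors $(H_5,f_5),\dots,(H_9,f_9)$ of Figure~\ref{fig:H6-9}. I would use Lemma~\ref{substr}(4),(5) to kill configurations with two adjacent type-$(2,0)$ vertices lacking a common neighbour (these would force $0\in\Ev(\cQ)$, contradicting the hypothesis) and configurations containing two disjoint induced cycles. For the remaining candidates I would verify integrality of $\Ev(\cQ(H,f))$ and spectral radius exactly $5$ directly, either via the explicit quotient-matrix technique of Proposition~\ref{qint} or by computing the characteristic polynomial on the small graph $H$.

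The hard part, I expect, is the non-bipartite odd-cycle bookkeeping. In the bipartite case of Proposition~\ref{prop:gr0ev} the author could invoke $\Spec(L(H))=\Spec(Q(H))$ and the spanning-tree count (Corollary~\ref{cor:235}) to force $|V(H)|$ into the form $2^a3^b5^c$, a very strong divisibility filter. With $0\notin\Ev(\cQ)$ that tool is unavailable: $H$ may be non-bipartite (so $Q(H)$ is not similar to $L(H)$) or carry nonzero weights, and neither the determinant-of-Laplacian nor the clean tree-counting argument applies. Consequently the pruning of candidate graphs must lean more heavily on ad hoc interlacing arguments and on the structural Lemma~\ref{substr}, and one has to handle the interplay between odd cycles and the weight function $f$ carefully. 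I would therefore expect the bulk of the work to be a careful, case-by-case interlacing analysis rather than a single clean counting identity.
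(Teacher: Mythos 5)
You correctly extract the two easy consequences of the hypothesis ($m_0=0$, and $a_{(1,0)}=0$ via the contrapositive of Lemma~\ref{substr}(1), i.e.\ the paper's Lemma~\ref{lem:a10}), and you are right that Corollary~\ref{cor:235} is unavailable here. But the plan has a genuine gap: nothing in it controls $a_{(1,1)}$. With $m_0=0$ and $a_{(1,0)}=0$, Corollary~\ref{cor:tr-8} reads $4-2m_2-2m_3=a_{(3,0)}-a_{(1,1)}+2a_{(2,1)}$, and since $a_{(1,1)}$ enters with a \emph{negative} sign, this caps only the combination $a_{(3,0)}+2a_{(2,1)}-a_{(1,1)}$; your claim that it ``caps $a_{(2,1)}$ and $a_{(3,0)}$ sharply'' is false without an independent bound on $a_{(1,1)}$, and the identity is consistent with many weighted pendant vertices. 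The only remaining bound is ball growth from the diameter — and note that $0\notin\Ev(\cQ(H,f))$ leaves at most five distinct eigenvalues, so $D\le 4$, not the $D\le 5$ you state — which with maximum degree $3$ still allows graphs with dozens of vertices. So the ``short finite list of candidates'' you promise is neither short nor attainable by hand, and the weighted case cannot be avoided: three of the five target graphs, $(H_6,f_6)$, $(H_8,f_8)$, $(H_9,f_9)$, have several type-$(1,1)$ vertices at mutual distance two.

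The missing idea is the paper's central device in this subsection, Proposition~\ref{prop:change}: deleting the edge between two adjacent type-$(2,0)$ vertices of a triangle and converting them to type $(1,1)$ preserves integrality (the proof shows the two spectra differ only by exchanging an eigenvalue $1$ for a $3$, so the condition $5\in\Ev(\cQ)\subseteq\{1,\dots,5\}$ transfers both ways). Applied in reverse, it merges any two type-$(1,1)$ vertices at distance two into a triangle of type-$(2,0)$ vertices, which splits the classification cleanly: the case $a_{(1,1)}=0$ is settled by Proposition~\ref{prop:H368} (yielding $(H_5,f_5)$ and $(H_7,f_7)$, with the other three graphs then produced by switching, Corollary~\ref{cor:H689}), and the residual case — type-$(1,1)$ vertices pairwise at distance $\ge 3$ — is killed by Proposition~\ref{prop:none}, where the separation hypothesis is precisely what yields $a_{(1,1)}\ge a_{(3,0)}$ and the small bounds ($n\le 8$, etc.) that make a computer-free case analysis feasible. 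Without a substitute for this switching reduction, your enumeration stalls exactly at the weighted vertices, which is where all the real work in this proposition lies.
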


\begin{figure}[h]
\begin{center}
\begin{tabular}{ccccc}
\includegraphics[scale=0.5]
{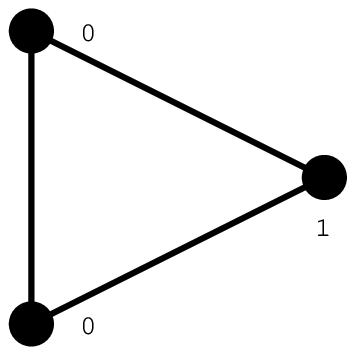} &
\includegraphics[scale=0.5]
{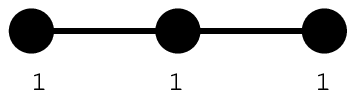} &
\includegraphics[scale=0.5]
{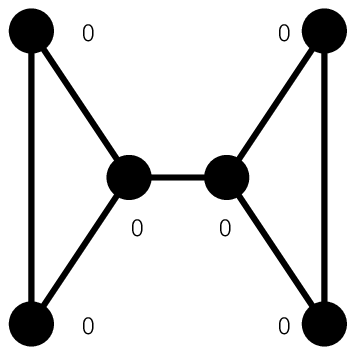} &
\includegraphics[scale=0.5]
{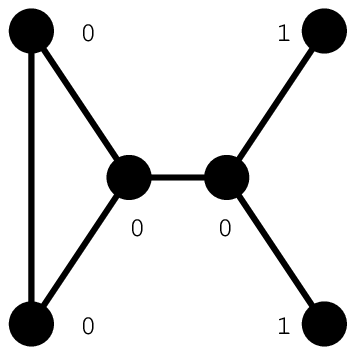} &
\includegraphics[scale=0.5]
{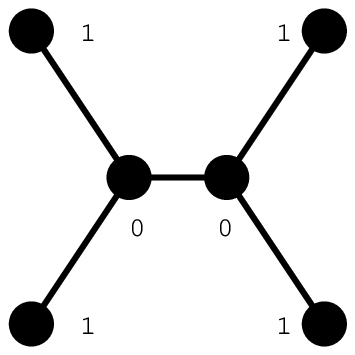} \\
$(H_5, f_5)$ &
$(H_6, f_6)$ &
$(H_7, f_7)$ &
$(H_8, f_8)$ &
$(H_9, f_9)$ \\
\end{tabular}
\end{center}
\caption{The vertex-weighted graphs $(H_5,f_5), \ldots, (H_9,f_9)$}
\label{fig:H6-9}
\end{figure}

Note that the diameter $D$ of $H$ is at most $4$ 
since $\cQ(H,f) $ has at most $5$ distinct eigenvalues.

\begin{lem}\label{lem:a10}
Let $(H,f)$ be a connected vertex-weighted graph 
with $\Ev(\cQ(H,f)) \subseteq \{\theta \in \bR \mid 1 \leq \theta \}$. 
Then $A_{(1,0)}= \emptyset$.  
\end{lem}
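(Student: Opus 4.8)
The plan is to argue by contradiction using interlacing, mimicking the local estimate already used in the proof of Lemma~\ref{substr}(1). Suppose $A_{(1,0)} \neq \emptyset$ and choose a vertex $x$ with $(\deg_H(x), f(x)) = (1,0)$; let $u$ be its unique neighbour. First I would write down the $2 \times 2$ principal submatrix $N$ of $\cQ := \cQ(H,f)$ indexed by $\{x, u\}$. Since $(\cQ)_{vv} = \deg_H(v) + 2f(v)$ and the off-diagonal entry between two adjacent vertices equals $1$, and since the diagonal entry at $x$ is $\deg_H(x) + 2f(x) = 1$, this submatrix is
\[
N = \begin{pmatrix} d & 1 \\ 1 & 1 \end{pmatrix}, \qquad d := \deg_H(u) + 2f(u) \geq 1.
\]

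Next I would compute $\theta_{\min}(N)$. The characteristic polynomial of $N$ is $\lambda^2 - (d+1)\lambda + (d-1)$, whose discriminant is $(d+1)^2 - 4(d-1) = (d-1)^2 + 4$, so
\[
\theta_{\min}(N) = \tfrac{1}{2}\Big( (d+1) - \sqrt{(d-1)^2 + 4} \Big).
\]
The crucial point is that $\theta_{\min}(N) < 1$: this inequality is equivalent to $d - 1 < \sqrt{(d-1)^2 + 4}$, which holds trivially because the right-hand side strictly exceeds $|d-1| \geq d-1$. Thus $N$ has an eigenvalue strictly smaller than $1$, regardless of the value of $d \geq 1$.

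Finally, since $\cQ(H,f)$ is real symmetric, the Interlacing Theorem (Theorem~\ref{0}) gives $\theta_{\min}(\cQ) \leq \theta_{\min}(N) < 1$, contradicting the hypothesis $\Ev(\cQ(H,f)) \subseteq \{\theta \in \bR \mid 1 \leq \theta\}$. Hence no such $x$ exists and $A_{(1,0)} = \emptyset$. I do not anticipate any genuine obstacle here: the argument is a single two-dimensional eigenvalue estimate combined with interlacing. The only small points to verify are that $N$ is genuinely a $2 \times 2$ submatrix (which holds because $x$ has a neighbour $u \neq x$) and that interlacing applies (which it does, as $\cQ(H,f)$ is symmetric); I would also note that in the degenerate case where $H$ has only the two vertices $x, u$ one has $\cQ = N$, so the contradiction is immediate even without invoking interlacing.
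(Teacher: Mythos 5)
Your proof is correct and follows essentially the same route as the paper, which also restricts $\cQ(H,f)$ to the pair $\{x,u\}$ and notes that the smallest eigenvalue of this $2\times 2$ principal submatrix is less than $1$, contradicting the hypothesis via interlacing. You merely make explicit the eigenvalue computation (characteristic polynomial $\lambda^2-(d+1)\lambda+(d-1)$ and the bound $d-1<\sqrt{(d-1)^2+4}$) that the paper leaves to the reader, and your calculation is accurate.
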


\begin{proof}
Suppose that $A_{(1,0)} \neq \emptyset$. 
Take $x \in A_{(1,0)}$.  
Let $y$ be the vertex adjacent to $x$. 
Then 
the smallest eigenvalue of $\cQ(H,f)|_{\{x,y\}}$ is less than $1$, 
which is a contradiction to 
$\Ev(\cQ(H,f)) \subseteq \{\theta \in \bR \mid 1 \leq \theta \}$. 
Hence $A_{(1,0)}= \emptyset$. 
\end{proof}

\begin{lem}\label{lem:A21}
Let $(H,f)$ be a connected vertex-weighted graph 
with $\Ev(\cQ(H,f)) \subseteq \{ \theta \in \bR \mid \theta \leq 5 \}$. 
Then $A_{(2,1)}$ is an independent set of $H$. 
\end{lem}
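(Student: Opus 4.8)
The plan is to argue by contradiction, using interlacing (Theorem \ref{0}) together with the strict monotonicity of the largest eigenvalue under passage to a proper induced subgraph (the Corollary to the Perron--Frobenius Theorem stated above). Suppose, towards a contradiction, that two vertices $x,y \in A_{(2,1)}$ are adjacent in $H$. First I would write down the $2\times 2$ principal submatrix of $\cQ := \cQ(H,f)$ indexed by $\{x,y\}$. Recalling that $\cQ = A(H) + \Delta(H) + 2\Delta_f$, a vertex of type $(2,1)$ contributes the diagonal entry $\deg_H(x) + 2f(x) = 2 + 2 = 4$, while the adjacency $x \sim y$ contributes the off-diagonal entry $A(H)_{xy} = 1$. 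Hence $\cQ|_{\{x,y\}} = \left(\begin{smallmatrix} 4 & 1 \\ 1 & 4 \end{smallmatrix}\right)$, whose eigenvalues are $3$ and $5$, so $\theta_{\max}(\cQ|_{\{x,y\}}) = 5$.

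Next I would observe that $\{x,y\}$ is a \emph{proper} induced subgraph of $H$: since $x$ has degree $2$ in $H$, it has a neighbour distinct from $y$, so $H$ has at least three vertices. The Corollary to the Perron--Frobenius Theorem then applies with $M = \cQ$ and $\Gamma'$ the subgraph induced on $\{x,y\}$, yielding $\theta_{\max}(\cQ) > \theta_{\max}(\cQ|_{\{x,y\}}) = 5$. This contradicts the hypothesis $\Ev(\cQ) \subseteq \{\theta \in \bR \mid \theta \leq 5\}$, and therefore no two vertices of $A_{(2,1)}$ can be adjacent, i.e. $A_{(2,1)}$ is an independent set. Equivalently, one may bypass the corollary and invoke the second part of Theorem \ref{perfro} directly: interlacing forces $\theta_{\max}(\cQ) \geq 5$, so under the hypothesis $\theta_{\max}(\cQ) = \rho(\cQ) = 5$; then the eigenvalue $5$ of the submatrix $N = \cQ|_{\{x,y\}}$, which satisfies $O \leq N \leq \cQ$ and $|5| = \rho(\cQ)$, forces $N = \cQ$, contradicting $\deg_H(x) = 2$.

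I do not expect any substantial obstacle here; the proof is immediate once the correct $2\times 2$ block is identified. The only point requiring care is that the eigenvalue of this block is \emph{exactly} $5$, i.e. it sits on the boundary permitted by the hypothesis. Plain interlacing alone would give merely $\theta_{\max}(\cQ) \geq 5$, which is consistent with $\Ev(\cQ) \subseteq \{\theta \leq 5\}$ and hence yields no contradiction. The genuine content of the argument is therefore the strictness coming from the Perron--Frobenius Theorem, whose use is licensed precisely by verifying that $\{x,y\}$ is a proper induced subgraph—and this is exactly where the degree-$2$ condition encoded in the type $(2,1)$ enters.
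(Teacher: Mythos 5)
Your proof is correct and takes essentially the same route as the paper: the paper's proof likewise supposes two adjacent vertices $x,y\in A_{(2,1)}$, observes that $\theta_{\max}\bigl(\cQ(H,f)|_{\{x,y\}}\bigr)=5$ while $V(H)\neq\{x,y\}$, and concludes by the strictness in the Perron--Frobenius Theorem. Your write-up simply makes explicit the details the paper leaves implicit, namely the computation of the block $\left(\begin{smallmatrix}4&1\\1&4\end{smallmatrix}\right)$ and the fact that $\deg_H(x)=2$ forces $\{x,y\}$ to be a proper induced subgraph.
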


\begin{proof}
Suppose that there exist two adjacent vertices 
$x$ and $y$ in $A_{(2,1)}$. 
Then the largest eigenvalue of $\cQ(H,f)|_{\{x,y\}}$ is equal to $5$ 
but $V(H) \neq \{x,y\}$, 
which is a contradiction to 
$\Ev(\cQ(H,f)) \subseteq \{\theta \in \bR \mid \theta \leq 5 \}$. 
Hence $A_{(2,1)}$ is an independent set of $H$. 
\end{proof}

\begin{lem}\label{lem:A21-2}
Let $(H,f)$ be a connected vertex-weighted graph 
with $\Ev(\cQ(H,f)) \subseteq \{\theta \in \bR \mid \theta \leq 5 \}$. 
Then there is no triangle $K_3$ consisting three vertices of types 
$(2,0),(2,1),(3,0)$. 
\end{lem}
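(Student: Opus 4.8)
The plan is to exhibit a single $3\times 3$ principal submatrix of $\cQ(H,f)$ whose largest eigenvalue already exceeds $5$, and then invoke interlacing to contradict the hypothesis $\Ev(\cQ(H,f)) \subseteq \{\theta \in \bR \mid \theta \leq 5\}$. Suppose for contradiction that $H$ contains a triangle on vertices $x,y,z$ of types $(2,0)$, $(2,1)$, $(3,0)$, respectively. Recall from the definition that $\cQ(H,f) = A(H) + \Delta(H) + 2\Delta_f$, so its diagonal entry at a vertex $v$ equals $\deg_H(v) + 2f(v)$, while its off-diagonal entry at an adjacent pair equals $1$. The diagonal entries are therefore $2$, $4$, $3$ for $x$, $y$, $z$, and since $x,y,z$ are pairwise adjacent, ordering the three vertices as $x,y,z$ gives
\[
M := \cQ(H,f)|_{\{x,y,z\}} = \begin{pmatrix} 2 & 1 & 1 \\ 1 & 4 & 1 \\ 1 & 1 & 3 \end{pmatrix}.
\]

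Next I would verify that $\theta_{\max}(M) > 5$. The characteristic polynomial of $M$ is $p(\lambda) = \lambda^3 - 9\lambda^2 + 23\lambda - 17$, and one computes $p(5) = 125 - 225 + 115 - 17 = -2 < 0$. Since $p$ is monic of odd degree, having $p(5)<0$ already forces a root of $p$ in $(5,\infty)$, so the largest eigenvalue of $M$ is strictly greater than $5$. (Equivalently, $\det(M-5I) = 2 > 0$ while $\tr M = 9 < 15$ shows that exactly one eigenvalue of $M$ lies above $5$, none equals $5$, and the parity of the number of factors $\lambda_i - 5$ that are negative rules out all three being large.)

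Finally, since $M$ is a principal submatrix of the real symmetric matrix $\cQ(H,f)$, the Interlacing Theorem (Theorem \ref{0}) yields $\theta_{\max}(\cQ(H,f)) \geq \theta_{\max}(M) > 5$, contradicting the assumption that every eigenvalue of $\cQ(H,f)$ is at most $5$. Hence no such triangle can occur. There is essentially no obstacle here beyond the single $3\times 3$ eigenvalue estimate; the only point that requires care is securing the \emph{strict} inequality $\theta_{\max}(M) > 5$ rather than merely $\geq 5$, which is exactly what the sign computation $p(5) = -2 < 0$ provides.
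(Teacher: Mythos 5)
Your proof is correct and follows exactly the paper's approach: the paper's own proof simply asserts that $\cQ(H,f)|_{K_3}$ has largest eigenvalue greater than $5$ and concludes by contradiction, while you supply the omitted verification (the submatrix $M$, its characteristic polynomial $\lambda^3 - 9\lambda^2 + 23\lambda - 17$, and the sign check $p(5) = -2 < 0$), all of which I confirm is accurate. As a minor remark, one could get the strict inequality $\theta_{\max}(M) > 5$ even faster by noting that the row sums of $M$ are $4, 6, 5$, so $\theta_{\max}(M)$ strictly exceeds their average $5$ since they are not all equal.
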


\begin{proof}
If there is a triangle $K_3$ consisting three vertices of types 
$(2,0),(2,1),(3,0)$, 
then  
$\cQ(H,f)|_{K_3}$ has the largest eigenvalue greater than $5$, 
which is a contradiction. 
\end{proof}

\begin{lem}\label{lem:Dleq3}
Let $(H,f)$ be a connected vertex-weighted graph 
with $5 \in \Ev(\cQ(H,f)) \subseteq \{1,2,3,4,5\}$. 
If $a_{(1,1)}=0$ and $a_{(3,0)} \geq 1$, then 
the diameter of $H$ is at most $3$. 
\end{lem}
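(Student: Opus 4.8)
\emph{Step 1: reducing the vertex types.}
Since $\Ev(\cQ(H,f)) \subseteq \{1,2,3,4,5\}$ we have $0 \notin \Ev(\cQ(H,f))$, so Lemma~\ref{lem:a10} gives $A_{(1,0)} = \emptyset$; the hypothesis gives $A_{(1,1)} = \emptyset$; and a vertex of type $(4,0)$ would force $H = K_{1,4}$ by Lemma~\ref{lem:deg-f-pair}, which has vertices of type $(1,0)$ and is thus excluded. Hence every vertex is of type $(2,0)$, $(2,1)$ or $(3,0)$, with $(\cQ)_{xx} = 2,4,3$ respectively, and in particular $\delta(H) \ge 2$. By Proposition~\ref{prop:diam}, $\cQ$ has at most $5$ distinct eigenvalues, so $D := \diam(H) \le 4$. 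The plan is to assume $D = 4$ and produce a principal submatrix (equivalently, a test vector) whose Rayleigh quotient drops below $1$, contradicting interlacing (Theorem~\ref{0}), since $\theta_{\min}(\cQ) \ge 1$.

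\emph{Step 2: the test vector on a geodesic.}
Suppose $D = 4$ and fix a geodesic $v_0 v_1 v_2 v_3 v_4$; being shortest, it induces $P_5$ (no chords). As $\theta_{\min}(\cQ) \ge 1$, the matrix $\cQ - I$ is positive semidefinite, and the identity underlying Proposition~\ref{propinc}/\ref{propsemi}(i) gives, for every vector $x$,
\[
x^{T}(\cQ - I)x = \sum_{ab \in E(H)}(x_a + x_b)^2 + \sum_{v \in V(H)}(2f(v)-1)\,x_v^2 .
\]
Setting $x_{v_i} = (-1)^i$ along the geodesic and $x = 0$ elsewhere annihilates every path edge, so the right-hand side equals $B + 2W - 5$, where $B$ is the number of edges with exactly one endpoint on the geodesic and $W$ is the number of type-$(2,1)$ vertices among the $v_i$. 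Because the induced subgraph is $P_5$ (four edges) and the degrees are $2$ or $3$, one gets $B = \sum_i \deg(v_i) - 8 = 2 + s$, with $s$ the number of type-$(3,0)$ vertices on the geodesic; hence $x^{T}(\cQ-I)x = s + 2W - 3$. If $s + 2W \le 2$ this is negative, a contradiction. So we may assume $s + 2W \ge 3$.

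\emph{Step 3: the heavy geodesics.}
It remains to treat geodesics carrying several degree-$3$ or type-$(2,1)$ vertices, where the $P_5$ submatrix alone need not have an eigenvalue below $1$ (for instance the all-degree-$3$ path gives $\theta_{\min} = 3 - \sqrt 3 > 1$). Here I would enlarge the support of the test vector: by $\delta(H) \ge 2$ each type-$(3,0)$ vertex $v_i$ (and each endpoint) has an off-geodesic neighbour, and extending $x$ to an $f=0$ neighbour $w$ with sign $-x_{v_i}$ cancels the edge $v_i w$ while adding its negative diagonal defect, lowering $x^{T}(\cQ - I)x$ by one per degree-$2$ neighbour absorbed. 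Lemmas~\ref{lem:A21} and \ref{lem:A21-2} (independence of $A_{(2,1)}$ and absence of the mixed triangle) prevent the newly exposed edges from re-accumulating positive terms, while the impossibility of a cubic $f=0$ piece — which would give $\rho(\cQ)=6\neq5$ — guarantees enough degree-$2$ vertices in the neighbourhood to drive the form below $0$. Running this through the finitely many heavy patterns of $(v_0,\dots,v_4)$ yields the contradiction in every case, so $D \le 3$.

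\emph{Main obstacle.}
The real work is Step~3: the light geodesics fall out of one clean computation, but the heavy ones must be handled by growing the induced subgraph and arguing that degree-$3$ branching cannot persist without forcing either a cubic fragment (wrong spectral radius) or a configuration forbidden by Lemmas~\ref{lem:A21} and \ref{lem:A21-2}. An alternative lever for some heavy patterns is the bound $\theta_2(\cQ) \le 4$ (from $m_5 = 1$): a submatrix with two eigenvalues exceeding $4$ is likewise impossible. Keeping the bookkeeping of the extended test vector — or of the enlarged induced subgraph — under control across all cases is where the difficulty concentrates.
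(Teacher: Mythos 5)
Your Steps 1 and 2 are sound: the quadratic-form identity $x^{T}(\cQ - I)x=\sum_{ab\in E(H)}(x_a+x_b)^2+\sum_{v\in V(H)}(2f(v)-1)x_v^2$ is correct, and the computation giving $s+2W-3$ on an induced $P_5$ checks out, so light geodesics are indeed dispatched. But Step 3 --- which you yourself identify as the real work --- is a plan rather than a proof, and it has a concrete failure mode. When you extend the test vector to an off-geodesic neighbour $w$ of $v_i$ with $x_w=-x_{v_i}$, any second edge from $w$ to a geodesic vertex $v_j$ carrying the sign $-x_{v_i}$ contributes $(x_w+x_{v_j})^2=4$, and neither Lemma \ref{lem:A21} nor Lemma \ref{lem:A21-2} forbids such configurations: for instance $w$ of type $(2,0)$ a common neighbour of two adjacent type-$(3,0)$ geodesic vertices forms a triangle that those lemmas allow, and there every choice of $x_w\neq 0$ strictly increases the form, so the extension cannot ``drive the form below $0$'' at all. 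The cubic-fragment argument is likewise too weak as stated: it yields $\rho(\cQ)\ge 6$ only when some induced subgraph is $3$-regular with all $H$-degrees equal to $3$, which the heavy patterns need not produce. Since, as you note, the bare $P_5$ submatrix can have $\theta_{\min}=3-\sqrt{3}>1$, the cases $s+2W\ge 3$ genuinely require the enumeration you never perform; asserting that running through the finitely many heavy patterns ``yields the contradiction in every case'' is the conclusion, not an argument.

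The idea you are missing is that the lemma is purely arithmetic, and you already had every ingredient in hand. The paper's proof is two lines: since $\Ev(\cQ(H,f))\subseteq\{1,2,3,4,5\}$ gives $m_0=0$, Lemma \ref{lem:a10} gives $a_{(1,0)}=0$, and $a_{(1,1)}=0$ by hypothesis, Corollary \ref{cor:tr-8} collapses to $2m_2+2m_3+a_{(3,0)}+2a_{(2,1)}=4$; so $a_{(3,0)}\ge 1$ forces $m_2+m_3\le 1$, whence $\cQ(H,f)$ has at most four distinct eigenvalues, and Proposition \ref{prop:diam} gives $D\le 3$ directly. Note the contrast: you used Proposition \ref{prop:diam} only to get $D\le 4$ and then tried to kill $D=4$ geometrically, geodesic by geodesic, whereas the trace identity of Corollary \ref{cor:tr-8} eliminates an entire eigenvalue from the spectrum and lets the same proposition finish immediately --- strictly less work, and exactly the counting machinery the surrounding subsection is built around.
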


\begin{proof}
Since $m_0=0$, $a_{(1,0)}=0$, and $a_{(1,1)}=0$, 
we have $-2 m_2 - 2 m_3 +4 = a_{(3,0)}+ a_{(2,1)}$ 
by Corollary \ref{cor:tr-8}. 
Since $m_2, m_3, a_{(3,0)}, a_{(2,1)}$ are nonnegative integers, 
if $a_{(3,0)} \geq 1$, then $m_2=0$ or $m_3=0$. 
Therefore $D \leq 3$. 
\end{proof}


\begin{prop}\label{prop:H368}
Let $(H,f)$ be a connected vertex-weighted graph 
with $5 \in \Ev(\cQ(H,f)) \subseteq \{1,2,3,4,5\}$. 
If $a_{(1,1)}=0$, 
then $(H,f)$ is $(H_5,f_5)$ or $(H_7,f_7)$. 
\end{prop}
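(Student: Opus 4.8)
The plan is to first determine which vertex types can occur, then extract a single numerical constraint from the trace identity of Corollary~\ref{cor:tr-8}, and finally split into a cycle case and a bounded-diameter case, each of which reduces to checking only finitely many weighted graphs for integrality and spectral radius.

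First I would record the type reduction. The hypothesis $\Ev(\cQ(H,f))\subseteq\{1,2,3,4,5\}$ means $0\notin\Ev(\cQ(H,f))$, so $m_0=0$ and, by Lemma~\ref{lem:a10}, $A_{(1,0)}=\emptyset$. A vertex of type $(4,0)$ would force $(H,f)=(K_{1,4},0)$ by Lemma~\ref{lem:deg-f-pair}, but $K_{1,4}$ is bipartite with $f=0$, so $0\in\Ev(\cQ)$ by Corollary~\ref{0ev}, a contradiction. Together with the standing assumption $a_{(1,1)}=0$, only the types $(2,0)$, $(2,1)$, $(3,0)$ survive, so $V(H)=A_{(2,0)}\cup A_{(2,1)}\cup A_{(3,0)}$. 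Substituting $m_0=a_{(1,0)}=a_{(1,1)}=0$ into Corollary~\ref{cor:tr-8} yields the key identity
\[
a_{(3,0)}+2a_{(2,1)}=4-2(m_2+m_3),
\]
so that $a_{(3,0)}+2a_{(2,1)}\in\{0,2,4\}$; in particular $a_{(3,0)}$ is even and at most $4$.

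Next I would branch on $a_{(3,0)}$. If $a_{(3,0)}=0$, every vertex has degree $2$, so $H$ is a cycle $C_n$; since a cycle with $f=0$ has $\rho(\cQ)=\rho(Q(C_n))=4<5$, we must have $a_{(2,1)}\in\{1,2\}$, and the diameter bound (Proposition~\ref{prop:diam}: at most five distinct eigenvalues forces $D\le 4$) gives $n\le 9$. By Lemma~\ref{lem:A21} the at most two weighted vertices are non-adjacent, so only finitely many weighted cycles remain. If $a_{(3,0)}\ge 1$, then Lemma~\ref{lem:Dleq3} gives $D\le 3$, and the identity leaves only $(a_{(3,0)},a_{(2,1)})\in\{(2,0),(2,1),(4,0)\}$. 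In each of these cases I would solve the full trace system of Proposition~\ref{prop:trace} for $m_1,m_4,a_{(2,0)}$ in terms of the remaining free multiplicities, then combine $D\le 3$ with $a_{(3,0)}\le 4$ to bound $|V(H)|$; the constraints that $A_{(2,1)}$ is independent (Lemma~\ref{lem:A21}) and that no triangle mixes types $(2,0),(2,1),(3,0)$ (Lemma~\ref{lem:A21-2}) cut the candidates down to an explicit finite list.

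Finally, for each surviving candidate I would compute $\Spec(\cQ(H,f))$ directly and discard every weighted graph whose spectrum is not integral or whose spectral radius is not exactly $5$. The survivors should be precisely the triangle $C_3$ carrying a single weight — this is $(H_5,f_5)$, emerging from the cycle branch with $a_{(2,1)}=1$, since one checks $\Spec(\cQ)=\{5,2,1\}$ — and the six-vertex, seven-edge non-bipartite graph with $f=0$ and two vertices of degree $3$ — this is $(H_7,f_7)$, emerging from the $(a_{(3,0)},a_{(2,1)})=(2,0)$ branch. The main obstacle is keeping the enumeration genuinely finite and short: the diameter bounds together with the identity above control $|V(H)|$, but one still has to organize the placements of the degree-$3$ and weighted vertices carefully, and it is exactly the independence of $A_{(2,1)}$ and the no-mixed-triangle lemma that prevent the list of candidate weighted graphs from exploding before the final integrality check is applied.
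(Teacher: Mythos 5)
Your proposal is correct and takes essentially the same route as the paper: rule out types $(1,0)$ and $(4,0)$, extract the identity $a_{(3,0)}+2a_{(2,1)}=4-2(m_2+m_3)$ from Corollary~\ref{cor:tr-8}, and split into the cycle case $a_{(3,0)}=0$ versus $a_{(3,0)}\in\{2,4\}$, closing each branch with a diameter bound and a finite integrality/spectral-radius check that leaves exactly $(H_5,f_5)$ and $(H_7,f_7)$. The only difference is tactical: where you enumerate cycles up to $C_9$ via $D\le 4$, the paper instead applies Lemma~\ref{substr}(4) together with Lemma~\ref{lem:A21} to force weighted and unweighted vertices to alternate, collapsing the cycle case immediately to the quadrangle, and uses Lemma~\ref{substr}(5) to make two degree-$3$ vertices adjacent when $a_{(3,0)}\ge 2$ --- a smaller finite check, but the same argument in substance.
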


\begin{proof}
If $a_{(3,0)} = 0$, then $H$ is an $n$-gon and 
by Lemma \ref{substr} (4), and Lemma \ref{lem:A21}, 
either $n=2a_{(2,0)}$ or $n =3$ and $(H,f) = (H_5, f_5)$. 
By Corollary \ref{cor:tr-8}, in the first case we have 
$a_{(2,1)} = a_{(2,0)} = 2$ and $H$ is a quadrangle. 
It is easy to check that this is not possible. 
If $a_{(3,0)} \geq 1$, then as $a_{(3,0)}$ is even $a_{3,0)} \geq 2$. 
As then $H$ has at least two cycles,
two degree 3 vertices must be adjacent by Lemma \ref{substr} (5). 
As the diameter is at most three it is now 
easy to check that we must have $(H,f) = (H_7, f_7)$. 
This completes the proof. 
\end{proof}

\begin{prop}\label{prop:change}
Let $(H,f)$ be a connected vertex-weighted graph. 
Suppose that $(H,f)$ has a triangle $x_1x_2x_3$ 
such that $x_1$ and $x_2$ are vertices of type $(2,0)$. 
Let $(\tilde{H},\tilde{f})$ be the vertex weighted graph 
obtained from $(H,f)$ by deleting the edge $x_1x_2$ and 
changing the type of the vertices $x_1$ and $x_2$ to 
type $(1,1)$. 
Then, $\Ev(\cQ(H,f)) \subseteq \bZ$ if and only if 
$\Ev(\cQ(\tilde{H},\tilde{f}))\subseteq \bZ$. 
\end{prop}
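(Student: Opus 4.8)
The plan is to show that $\cQ(H,f)$ and $\cQ(\tilde{H},\tilde{f})$ differ only by a rank-one perturbation that shifts a single, explicitly known eigenvalue, so that integrality of one spectrum forces integrality of the other. First I would pin down exactly how the two matrices differ. Since $x_1$ and $x_2$ have type $(2,0)$ and lie on the triangle $x_1x_2x_3$, each of them is adjacent only to the other and to $x_3$; in particular, no vertex outside $\{x_1,x_2,x_3\}$ is adjacent to $x_1$ or $x_2$. The operation deletes the edge $x_1x_2$ and resets the types of $x_1,x_2$ to $(1,1)$, while leaving every other vertex --- in particular $x_3$, whose degree and weight are untouched --- exactly as before. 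Consequently the only entries that change lie in the principal $2\times 2$ block indexed by $\{x_1,x_2\}$, which passes from $\begin{pmatrix}2&1\\1&2\end{pmatrix}$ to $\begin{pmatrix}3&0\\0&3\end{pmatrix}$.

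Setting $v:=e_{x_1}-e_{x_2}$, this observation yields the identity
\[
\cQ(\tilde{H},\tilde{f}) = \cQ(H,f) + v v^{T}.
\]
Next I would verify that $v$ is already an eigenvector of $\cQ(H,f)$. Reading off the two relevant rows gives $(\cQ(H,f)v)_{x_1}=2-1=1$ and $(\cQ(H,f)v)_{x_2}=1-2=-1$, while for every other vertex $y$ the entries in columns $x_1$ and $x_2$ are equal (both $1$ when $y=x_3$, both $0$ otherwise), so $(\cQ(H,f)v)_y=0$. Hence $\cQ(H,f)v=v$, that is, $v$ is an eigenvector with eigenvalue $1$.

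With these two facts the conclusion is immediate. Because $v$ is an eigenvector of the symmetric matrix $\cQ(H,f)$, I can diagonalize $\cQ(H,f)$ in an orthonormal basis containing $v/\|v\|$; the perturbation $vv^{T}$ annihilates every vector of $v^{\perp}$ and sends $v$ to $2v$, so $\cQ(\tilde{H},\tilde{f})v=3v$ while $\cQ(\tilde{H},\tilde{f})$ agrees with $\cQ(H,f)$ on $v^{\perp}$. Therefore $\Spec(\cQ(H,f))$ and $\Spec(\cQ(\tilde{H},\tilde{f}))$ coincide as multisets, except that the single eigenvalue $1$ contributed by $v$ is replaced by $3$. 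Since $1,3\in\bZ$, this forces $\Ev(\cQ(H,f))\subseteq\bZ$ if and only if $\Ev(\cQ(\tilde{H},\tilde{f}))\subseteq\bZ$.

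The only step that requires genuine care --- and the one I would treat as the crux --- is the structural claim that no other matrix entries change. This rests entirely on $x_1$ and $x_2$ having degree $2$ and both lying on the triangle, which confines all their adjacencies to $\{x_1,x_2,x_3\}$; once that is established, the remaining argument is the elementary linear algebra of an eigenvalue-preserving rank-one perturbation.
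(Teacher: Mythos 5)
Your proof is correct, and it takes a genuinely different (and in fact stronger) route than the paper's. The paper argues eigenvector by eigenvector: writing $M_1:=\cQ(H,f)$ and $M_2:=\cQ(\tilde{H},\tilde{f})$, for an eigenvector $u$ of $M_1$ with eigenvalue $\theta$ it subtracts the two rows indexed by $x_1$ and $x_2$ to get $(u_1-u_2)=\theta(u_1-u_2)$, so either $\theta=1$ (integral anyway) or $u_1=u_2$, in which case $u$ is also a $\theta$-eigenvector of $M_2$; the converse direction is symmetric, with the relation $3(\tilde{u}_1-\tilde{u}_2)=\tilde{\theta}(\tilde{u}_1-\tilde{u}_2)$ and $3$ playing the role of $1$. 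Your argument packages the same local computation globally: the identity $M_2=M_1+vv^T$ with $v=e_{x_1}-e_{x_2}$, together with the observation $M_1v=v$, shows that $M_1$ and $M_2$ agree on the invariant subspace $v^{\perp}$ while the single eigenvalue $1$ carried by $v$ becomes $3$. This yields the precise multiset relation $\Spec(M_2)=\left(\Spec(M_1)\setminus\{1\}\right)\cup\{3\}$, from which the equivalence of integrality is immediate, and as a bonus it shows that $1\in\Ev(M_1)$ and $3\in\Ev(M_2)$ always hold under the hypotheses --- information the paper's pointwise transfer argument does not produce. Both proofs ultimately rest on the structural fact you correctly isolate as the crux: since $x_1$ and $x_2$ have degree $2$ and lie on the triangle $x_1x_2x_3$, all their adjacencies are confined to $\{x_1,x_2,x_3\}$, so only the $2\times 2$ principal block indexed by $\{x_1,x_2\}$ changes, passing from $\bigl(\begin{smallmatrix}2&1\\1&2\end{smallmatrix}\bigr)$ to $\bigl(\begin{smallmatrix}3&0\\0&3\end{smallmatrix}\bigr)$, exactly as in the paper's Figure~\ref{tab:mat-change}.
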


\begin{proof}
Let $M_1:=\cQ(H,f)$ and $M_2:=\cQ(\tilde{H},\tilde{f})$. 
First, suppose that $\Ev(M_2) \subseteq \bZ$. 
Take any $\theta \in \Ev(M_1)$. 
Then  
$M_1u =\theta u$ for some $0 \neq u \in \bR^n$.  
Therefore, we have 
$2u_1 + u_2 + u_3 = \theta u_1$ and 
$u_1 + 2u_2 + u_3 = \theta u_2$. 
So we have 
$u_1 - u_2 = \theta(u_1 - u_2)$. 
Thus if $\theta \neq 0,1$, then $u_1=u_2$. 
In this case, it holds that $M_2u =\theta u$, 
i.e., $\theta \in \Ev(M_2) \subseteq \bZ$
Hence $\Ev(M_1) \subseteq \bZ$. 
Second, suppose that $\Ev(M_1) \subseteq \bZ$. 
Take any $\tilde{\theta} \in \Ev(M_2)$. 
Then  
$M_1\tilde{u} =\tilde{\theta} \tilde{u}$ 
for some $0 \neq \tilde{u} \in \bR^n$.  
Therefore, we have 
$3\tilde{u}_1 + \tilde{u}_3 = \tilde{\theta} \tilde{u}_1$ and 
$3\tilde{u}_2 + \tilde{u}_3 = \tilde{\theta} \tilde{u}_2$. 
So we have 
$3(\tilde{u}_1 - \tilde{u}_2) = \tilde{\theta}(\tilde{u}_1 - \tilde{u}_2)$. 
Thus if $\tilde{\theta} \neq 0,3$, then $\tilde{u}_1=\tilde{u}_2$. 
In this case, it holds that $M_1\tilde{u} =\tilde{\theta} \tilde{u}$, 
i.e., $\tilde{\theta} \in \Ev(M_1) \subseteq \bZ$. 
Hence $\Ev(M_2) \subseteq \bZ$. 
\end{proof}

\begin{figure}[h]
\begin{center}
\begin{tabular}{ll}
$M_1 = 
\left(
\begin{array}{cc|cccc}
2 & 1 & 1 & 0 & \cdots & 0 \\
1 & 2 & 1 & 0 & \cdots & 0 \\
\hline 
1 & 1 & w_3  & a_{3,4}  & \cdots & a_{3,n} \\
0 & 0 & a_{4,3}  & w_4 &      & a_{4,n}  \\
\vdots & \vdots  & \vdots   &   & \ddots & \vdots \\
0 & 0 & a_{n,3}  & a_{n,4}  &   \cdots  & w_n  \\
\end{array}
\right)$ 
 & 
$M_2 = 
\left(
\begin{array}{cc|cccc}
3 & 0 & 1 & 0 & \cdots & 0 \\
0 & 3 & 1 & 0 & \cdots & 0 \\
\hline 
1 & 1 & w_3  & a_{3,4}  & \cdots & a_{3,n} \\
0 & 0 & a_{4,3}  & w_4 &      & a_{4,n}  \\
\vdots & \vdots  & \vdots   &   & \ddots & \vdots \\
0 & 0 & a_{n,3}  & a_{n,4}  &   \cdots  & w_n  \\
\end{array}
\right)$ \\
\end{tabular}
\end{center}
\caption{The matrices in the proof of Proposition \ref{prop:change}}
\label{tab:mat-change}
\end{figure}

\begin{cor}\label{cor:H689}
Let $(H,f)$ be one of the connected vertex-weighted graphs 
$(H_6,f_6)$ or $(H_8,f_8)$ or $(H_9,f_9)$. 
Then $(H,f)$ satisfies 
$5 \in \Ev(\cQ(H,f)) \subseteq \{1,2,3,4,5\}$. 
\end{cor}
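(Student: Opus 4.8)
The plan is to avoid computing the three spectra directly, and instead to bootstrap from Proposition~\ref{prop:change}, sharpening it so that it transfers not merely integrality but the full condition ``$5 \in \Ev(\cQ) \subseteq \{1,2,3,4,5\}$''.

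First I would read off from the proof of Proposition~\ref{prop:change} exactly how the transformation $M_1 = \cQ(H,f) \mapsto M_2 = \cQ(\tilde H,\tilde f)$ changes the spectrum. Decompose $\bR^n = W_{\mathrm{sym}} \oplus W_{\mathrm{anti}}$, where $W_{\mathrm{anti}}$ is spanned by the vector supported on $\{x_1,x_2\}$ with entries $+1,-1$ and $W_{\mathrm{sym}} = \{u : u_{x_1} = u_{x_2}\}$. The row computations in that proof show that both subspaces are invariant under $M_1$ and under $M_2$, that the spanning vector of $W_{\mathrm{anti}}$ is an eigenvector of $M_1$ for eigenvalue $1$ and of $M_2$ for eigenvalue $3$, and that $M_1|_{W_{\mathrm{sym}}}$ and $M_2|_{W_{\mathrm{sym}}}$ agree. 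Hence $\Spec(M_2)$ is obtained from $\Spec(M_1)$ by replacing one eigenvalue $1$ with one eigenvalue $3$ and leaving every other eigenvalue unchanged; in particular the multiplicities of $0$ and of $5$ are preserved. Since $\{1,3\} \subseteq \{1,2,3,4,5\}$, the transformation of Proposition~\ref{prop:change} and its inverse both preserve the property ``$5 \in \Ev(\cQ) \subseteq \{1,2,3,4,5\}$''.

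Next I would settle the two base cases by hand. For $(H_5,f_5)$, a triangle with one vertex of type $(2,1)$ and two of type $(2,0)$, the antisymmetry in the two degree-two vertices gives the eigenvalue $1$, while the symmetric quotient is a $2\times 2$ matrix with eigenvalues $5,2$, so $\Ev(\cQ(H_5,f_5)) = \{5,2,1\}$. For $(H_7,f_7)$ (with $f_7 = 0$), which is the graph formed by two triangles joined by an edge, the triangle-swap symmetry splits $Q(H_7)$ into the symmetric quotient (spectrum $\{5,2,1\}$) and a copy of $Q(K_3)$ (spectrum $\{4,1,1\}$), giving $\Spec(Q(H_7)) = \{5,4,2,1,1,1\}$. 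Both spectra lie in $\{1,2,3,4,5\}$ and contain $5$, so both base graphs satisfy the desired condition.

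Finally I would exhibit each target as an image of a base graph under Proposition~\ref{prop:change}: opening up the triangle of $(H_5,f_5)$ at its two type-$(2,0)$ vertices produces $(H_6,f_6)$; opening up one triangle of $(H_7,f_7)$ at its two type-$(2,0)$ vertices produces $(H_8,f_8)$; and opening up the remaining triangle then produces $(H_9,f_9)$. By the sharpened statement established above, each of these moves preserves ``$5 \in \Ev(\cQ) \subseteq \{1,2,3,4,5\}$'', so the three graphs inherit the condition from $(H_5,f_5)$ and $(H_7,f_7)$. The one genuinely substantive step is the first: Proposition~\ref{prop:change} as stated only guarantees integrality, so I expect the main work to be the eigenspace-decomposition argument pinning down the ``swap $1 \leftrightarrow 3$'' behaviour, after which the remaining steps are just matching each $(H_i,f_i)$ to the triangle being opened.
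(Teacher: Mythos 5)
Your proposal is correct and takes essentially the same route as the paper, whose entire proof of Corollary~\ref{cor:H689} is the single line that it follows from Propositions~\ref{prop:H368} and~\ref{prop:change}: your sharpening of Proposition~\ref{prop:change} to the statement that the spectrum changes only by swapping one eigenvalue $1$ for one eigenvalue $3$ (via the orthogonal decomposition into the $\pm$-symmetric subspaces at $x_1,x_2$) is exactly what is implicit in the paper's eigenvector computation there, and your identification of $(H_6,f_6)$, $(H_8,f_8)$, $(H_9,f_9)$ as the opened-up versions of $(H_5,f_5)$ and $(H_7,f_7)$ matches the paper's intent. The only difference is that you verify the base spectra $\{5,2,1\}$ and $\{5,4,2,1,1,1\}$ by hand (correctly --- they are consistent with Table~\ref{t:001} via Proposition~\ref{propinc}), thereby filling in two details the paper leaves tacit: that Proposition~\ref{prop:H368}, being a conditional classification, does not itself assert that $(H_5,f_5)$ and $(H_7,f_7)$ satisfy the spectral condition, and that Proposition~\ref{prop:change} as literally stated transfers only integrality rather than the full condition $5\in\Ev(\cQ)\subseteq\{1,2,3,4,5\}$.
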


\begin{proof}
This follows from Propositions \ref{prop:H368} and \ref{prop:change}. 
\end{proof}

Now we assume that $A_{(1,1)} \neq \emptyset$ and that $d_H(x,y) \geq 3$ 
for any distinct vertices $x$ and $y$ in $A_{(1,1)}$ 
(Proposition \ref{prop:change}).

\begin{prop}\label{prop:none}
There does not exist connected vertex-weighted graph $(H,f)$ 
satisfying $5 \in \Ev(\cQ(H,f)) \subseteq \{1,2,3,4,5\}$ 
such that $d_H(x,y) \geq 3$ for any distinct vertices $x$ and $y$ 
in $A_{(1,1)}$ and $a_{(1,1)} \geq 1$.
\end{prop}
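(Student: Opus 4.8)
The plan is to fix a vertex $x \in A_{(1,1)}$, analyze the local structure of $H$ around $x$ using the positive eigenvector for the eigenvalue $5$ together with interlacing, and show that every configuration compatible with the hypotheses has already been excluded, so that no such $(H,f)$ can exist. First I would record the standing constraints: by Lemma \ref{lem:a10} we have $A_{(1,0)} = \emptyset$, so every vertex has type $(1,1),(2,0),(2,1)$ or $(3,0)$; since $\cQ(H,f)$ has at most five distinct eigenvalues, $H$ has diameter $D \le 4$; and since $x$ has degree $1$ and $H$ has maximum degree $3$, every vertex lies within distance $4$ of $x$, which bounds $|V(H)| \le 1+1+2+4+8 = 16$. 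Thus the problem is finite, and the task is to organize the case analysis so it can be done without a computer.

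Let $y$ be the unique neighbour of $x$. Because $x$ has degree $1$, the principal submatrix of $\cQ := \cQ(H,f)$ on $\{x,y\}$ is $\left(\begin{smallmatrix} 3 & 1 \\ 1 & w_y \end{smallmatrix}\right)$ with $w_y = \deg_H(y)+2f(y)$, and its smallest eigenvalue is $\ge 1$ only if $w_y \ge 2$; together with $H \ne K_2$ this forces $y$ to have type $(2,0)$, $(2,1)$ or $(3,0)$. The main tool is the positive eigenvector $v$ for the eigenvalue $5$ (Theorem \ref{perfro}): at each vertex $u$ it satisfies $\sum_{u' \sim u} v_{u'} = (5-w_u)v_u$, which at $x$ gives $v_y = 2v_x$, and at a $(2,0)$, $(2,1)$, $(3,0)$ vertex gives neighbour-sums $3v_u$, $v_u$, $2v_u$ respectively. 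Propagating these identities outward from $x$, combined with the requirement that every principal submatrix of $\cQ$ have all its eigenvalues in $[1,5]$ (interlacing, Theorem \ref{0}), lets me forbid local patterns: for instance the submatrix on $x$, a $(2,0)$-neighbour $y$, and a further $(2,0)$-neighbour $z$ of $y$ equals $\left(\begin{smallmatrix}3&1&0\\1&2&1\\0&1&2\end{smallmatrix}\right)$, whose smallest eigenvalue lies below $1$; hence a $(2,0)$-vertex adjacent to $x$ is never adjacent to a second $(2,0)$-vertex.

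The remaining structural constraints I would feed in are: $A_{(2,1)}$ is independent (Lemma \ref{lem:A21}); there is no triangle on types $(2,0),(2,1),(3,0)$ (Lemma \ref{lem:A21-2}); any two adjacent $(2,0)$-vertices share a common neighbour, else $0 \in \Ev(\cQ)$ (Lemma \ref{substr}(4)); $H$ has no two vertex-disjoint induced cycles (Lemma \ref{substr}(5)); and, crucially, the hypothesis that any two vertices of $A_{(1,1)}$ lie at distance $\ge 3$, which forbids a $(1,1)$-vertex at distance $1$ or $2$ from $x$. Using these I would classify, case by case on the type of $y$ and then on the types of the vertices at distance $2$ and $3$ from $x$, the finitely many admissible neighbourhoods; in each surviving shape the trace identities of Proposition \ref{prop:trace} and Corollary \ref{cor:tr-8} (with $m_0=0$), together with a direct interlacing or integrality check, should either push an eigenvalue outside $[1,5]$, make $5$ fail to be an eigenvalue, produce a non-integral eigenvalue, or violate one of the structural lemmas, giving the desired contradiction.

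I expect the main obstacle to be the case where $y$ has type $(2,0)$. There the eigenvector grows rapidly --- the second neighbour $z$ of $y$ satisfies $v_z = 5v_x$ --- so $z$ is forced to have type $(2,1)$ or $(3,0)$ and one must chase the structure further out (up to distance $4$, the diameter bound) before it closes up or collides with a second cycle or a far-away $(1,1)$-vertex. Keeping this growth under control, and in particular verifying that the chase terminates in a genuinely contradictory configuration in every branch rather than in one of the already-catalogued graphs $(H_5,f_5),\ldots,(H_9,f_9)$, is the delicate part; the distance-$\ge 3$ hypothesis is exactly what rules out the branches that would otherwise reproduce $(H_6,f_6)$, $(H_8,f_8)$ and $(H_9,f_9)$.
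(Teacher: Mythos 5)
Your local computations are sound as far as they go: the $2\times 2$ and $3\times 3$ interlacing checks are correct, the Perron eigenvector propagation $v_y=2v_x$, $v_z=5v_x$ follows from Theorem \ref{perfro} since $\cQ(H,f)$ is irreducible with spectral radius $5$, and the finiteness framing ($D\le 4$ by Proposition \ref{prop:diam}, hence $n\le 16$) is valid. But the proposal stops exactly where the proof has to happen. Every branch of your chase is closed only by the assertion that the trace identities ``should either push an eigenvalue outside $[1,5]$, make $5$ fail to be an eigenvalue, \dots''; that is a restatement of the goal, not an argument. Nothing in your outline controls the combinatorics of the search: a radius-$4$ ball around one $(1,1)$-vertex, with up to $16$ vertices each of four admissible types and essentially free adjacencies in the outer two layers, is thinned but not collapsed by your local exclusions, and you yourself flag the $(2,0)$-branch as unresolved. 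Moreover, anchoring at a single vertex of $A_{(1,1)}$ cannot see the global trade-offs the statement actually turns on, namely the relation between the \emph{counts} $a_{(1,1)}$, $a_{(3,0)}$, $a_{(2,1)}$ and the multiplicities $m_2,m_3$.

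That global reduction is the missing idea, and it is how the paper's proof works. Since $D=2$ forces $n\le 4$ (checked directly), one has $D\ge 3$, so by Proposition \ref{prop:diam} at least four distinct eigenvalues, i.e.\ $m_2+m_3\ge 1$; then Corollary \ref{cor:tr-8} with $m_0=0$ and $a_{(1,0)}=0$ (Lemma \ref{lem:a10}) gives $a_{(3,0)}-a_{(1,1)}+2a_{(2,1)}=4-2m_2-2m_3\le 2$. Since the odd-degree vertices are exactly $A_{(1,1)}\cup A_{(3,0)}$, parity gives $a_{(1,1)}\equiv a_{(3,0)}\pmod 2$; the boundary case $a_{(3,0)}=a_{(1,1)}+2$ forces $m_2+m_3=1$, $a_{(2,1)}=0$, $D=3$, $n\le 8$, and dies in three small subcases, while $a_{(1,1)}\ge a_{(3,0)}+2$ forces some $(1,1)$-vertex to have a degree-$2$ neighbour, whence $n\le 8$ again and the distance hypothesis plus a congruence $n\equiv a_{(1,1)}+m_3\pmod 3$ eliminate the survivors. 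This pins down $a_{(1,1)}=a_{(3,0)}$ and $a_{(2,1)}+m_2+m_3=2$ \emph{before} any structure-chasing, after which the $D=4$ and $D=3$ cases each reduce to a handful of parameter tuples with $n$ explicitly bounded. Your eigenvector-propagation route could in principle replace the final small-case checks, but without a substitute for these multiplicity relations the ``delicate part'' you identify is a genuine gap at the center of the argument, not a routine verification.
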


\begin{proof}
We prove the proposition by contradiction. 
Suppose that there exists 
a connected vertex-weighted graph $(H,f)$ 
satisfying $5 \in \Ev(\cQ(H,f)) \subseteq \{1,2,3,4,5\}$ 
such that 
$d_H(x,y) \geq 3$ for any distinct vertices $x$ and $y$ 
in $A_{(1,1)}$ and $a_{(1,1)}\geq 1$. 
If the diameter $D$ of $H$ equals $2$, then $n:=|V(H)| \leq 4$ 
and it is easy to check that there are no such graphs. 
So we have $D \geq 3$. 
Therefore $m_2 +m_3 \geq 1$, 
and so $a_{(3,0)} \leq a_{(1,1)} +2$ 
by Corollary \ref{cor:tr-8}. 
If $a_{(3,0)} = a_{(1,1)} + 2$, 
then $m_2 + m_3 = 1$, $D=3$, $a_{(2,1)} = 0$
and $n \leq 1 + 1 + 2 + 4 =8$. 
So we have $(a_{(3,0)}, a_{(1,1)}) \in \{(3,1), (4,2), (5,3) \}$. 
The case $(a_{(3,0)}, a_{(1,1)}) = (5,3)$ would be a contradiction 
to the assumption that $d_H(x,y) \geq 3$ 
for any distinct vertices $x$ and $y$ in $A_{(1,1)}$. 
The case $(a_{(3,0)}, a_{(1,1)}) = (3,1)$ gives $m_1 = m_2 =0$, 
which is a contradiction to $D = 3$. 
For the case $(a_{(3,0)}, a_{(1,1)}) = (4,2)$,  
there is no solution. 
So this shows $a_{(1,1)} \geq a_{(3,0)}$. 

If $a_{(1,1)} \geq a_{(3,0)} + 2$ then a neighbour of some vertex 
in $A_{(1,1)}$ has degree 2, so $n \leq 8$, but because of the assumption 
that $d_H(x,y) \geq 3$ for any distinct vertices $x$ and $y$ in $A_{(1,1)}$, 
we find $( a_{(3,0)}, a_{(1,1)} ) \in \{ (1, 3), (0, 2) \}$. 
As $n \equiv a_{(1,1)} + m_3 \pmod 3$, 
it is easy to check that there are no possibilities. 
Therefore $a_{(1,1)} = a_{(3,0)} \geq 1$. Then $a_{(2,1)} + m_2 + m_3 = 2$. 

Now we consider the case $D = 4$.
If $D = 4$, then $m_3 = m_2 =1 $ and $a_{(2,1)} = 0$. 
So $n \equiv a_{(1,1)} + 1 \pmod 3$. 
The case  $a_{(1,1)} = 1= a_{(3,0)}$ is not possible 
as then there are $2$ edges in 
the subgraph of $H$ induced by 
the set $A_{(2,0)}$ 
and only one vertex of degree three. 
Now $a_{(1,1)}= 2$ implies $n \in \{6, 9\}$, 
$a_{(1,1)}=3$ implies $n \in \{7, 10\}$, 
$a_{(1,1)}=4$ implies $n \in \{10, 13\}$, 
and $a_{(1,1)}= 5$ implies $n \in \{12, 15\}$. 
In all the cases 
it is easy to check that they do not occur. 
And clearly $a_{(1,1)} \geq 6 $ is impossible. 

So this shows that $D = 3$. 
Now $n \leq 8$. 
And in similar fashion one can show that no case can occur.
\end{proof}

\begin{proof}[Proof of Proposition \ref{prop:not0ev}]
It follows from Proposition \ref{prop:H368}, 
Corollary \ref{cor:H689}, and Proposition \ref{prop:none}. 
\end{proof}

\subsection{The case of exceptional graphs}\label{sec:proof-EG} 

In this subsection, we show the following: 

\begin{thm}\label{thm:EG001}
Let $\Gamma$ be a connected integral exceptional graph 
with spectral radius three. 
Then, $\Gamma$ is isomorphic to one of the $13$ graphs 
in Figure~\ref{fig:EG}.
\end{thm}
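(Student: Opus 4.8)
The plan is to run the same trace-and-interlacing machinery used for the generalized line graphs, but now directly on the adjacency matrix $A := A(\Gamma)$ and its shift $A + 2I$, exploiting the fact (underlying Theorem \ref{thm:Cameron}) that an exceptional graph is representable in the root system $E_8$. First I would record the spectral constraints. Since $\Gamma$ is exceptional, its least eigenvalue is at least $-2$; and if $\Gamma$ were bipartite its spectrum would be symmetric, forcing $-3 \in \Ev(A)$ and hence a least eigenvalue below $-2$. Thus $\Gamma$ is automatically non-bipartite and $\Ev(A) \subseteq \{-2,-1,0,1,2,3\}$, with $3$ a simple eigenvalue by the Perron-Frobenius Theorem (Theorem \ref{perfro}). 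Writing $m_\lambda$ for the multiplicity of $\lambda$, the $E_8$-representation gives $A + 2I = B^{T}B$ with $B$ having $8$ rows, so
\[
\operatorname{rank}(A+2I) = 1 + m_2 + m_1 + m_0 + m_{-1} \le 8,
\]
that is, $n - m_{-2} \le 8$, which is the key size bound.

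Second, I would set up the trace identities $\tr(A^{i}) = \sum_{\lambda} \lambda^{i} m_{\lambda}$ for $i = 0,1,2,3$, exactly as in Proposition \ref{prop:trace} and Corollary \ref{cor:tr-8}, now with $m_3 = 1$. From $\tr(A) = 0$ one obtains $2 m_{-2} = 3 + 2m_2 + m_1 - m_{-1}$, and $\tr(A^{3}) \ge 0$ (which counts triangles) gives a second relation; together with the rank inequality and the nonnegativity and integrality of the $m_\lambda$, these leave only finitely many admissible multiplicity vectors $(1,m_2,m_1,m_0,m_{-1},m_{-2})$ and in particular bound $n$. I would also establish the elementary structural lemmas needed: by interlacing (Theorem \ref{0}) the subgraph induced on a vertex and its neighbours has spectral radius at most $3$, while a coclique of size $5$ inside a neighbourhood would yield an induced $K_{1,5}$ of least eigenvalue $-\sqrt5 < -2$. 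From such local arguments one bounds the maximum degree (the relevant graphs turn out to have maximum degree $4$, which is the setting of \cite{SR}).

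Third comes realization: for each surviving multiplicity vector I must exhibit all graphs with that spectrum that are genuinely exceptional and check that they are precisely the entries of Figure \ref{fig:EG} and Table \ref{t:001}. Here I would use the $E_8$ geometry directly. The vertices correspond to a set of norm-$2$ roots with pairwise inner products in $\{0,1\}$, the prescribed spectrum fixes the Gram matrix $A+2I$ up to the bounded size, and the Weyl-group action together with the forbidden-subgraph constraints (no induced $K_{1,5}$, interlacing bounds on small induced subgraphs, and the requirement that the graph be \emph{not} representable in any $D_m$, i.e.\ not a generalized line graph) pins down the finitely many possibilities. As a consistency check, the resulting list should coincide with the $13$ graphs of \cite{SR}.

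The hard part will be exactly this last realization/uniqueness step. The trace and rank conditions only constrain the spectrum, and graphs are not determined by their spectra; ruling out spurious cospectral competitors and, crucially, certifying which admissible spectra are actually exceptional (rather than generalized line graphs, or not realizable at all) requires a careful, essentially case-by-case analysis inside $E_8$. Controlling $m_{-2}$ — and hence the number of vertices — and organizing the $E_8$ enumeration so that it remains computer-free is where the real work, and the main risk of a long argument, lie.
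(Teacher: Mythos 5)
Your opening move is sound and is in fact a cleaner derivation of the paper's key size bound: the paper obtains $n-m_{-2}\le 8$ by citing the existence of a star set $X$ for $-2$ with $\Gamma-X$ exceptional and $\theta_{\min}>-2$ (so $|V(\Gamma)|-|X|\in\{6,7,8\}$ from the known list of $573$ such graphs), whereas your $\operatorname{rank}(A+2I)\le 8$ via the $E_8$ representation gives the same inequality directly. But from there you have a genuine gap in bounding $n$. The trace identities (\ref{eq:A-m-0})--(\ref{eq:A-m-3}) together with $n-m_{-2}\le 8$ and integrality do \emph{not} pin $n$ down to $12$: for instance the multiplicity vector $(m_3,m_2,m_1,m_0,m_{-1},m_{-2})=(1,6,1,0,0,8)$, $n=16$, satisfies your stated constraints ($\tr A=0$, $\tr A^3=12\ge 0$, rank $\le 8$). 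What kills such vectors, and what your sketch omits, is the inequality $\theta_{\max}\ge 2|E(\Gamma)|/|V(\Gamma)|$, i.e.\ $\tr(A^2)\le 3n\le 3(8+m_{-2})$; combined with the relations $m_2=m_{-2}+|\triangle(\Gamma)|-4$ and $m_{-1}=m_1+2|\triangle(\Gamma)|-5$ extracted from (\ref{eq:A-m-1}) and (\ref{eq:A-m-3}), this is exactly how the paper (Lemma \ref{lm:005}) proves $m_{-2}\le 4$, with the triangle-free case split off separately (Lemma \ref{lm:003}: $\Gamma$ is then forced to be $3$-regular and is the Petersen graph). Your substitute claim that the graphs ``turn out to have maximum degree $4$'' is both unproven and too weak: the no-induced-$K_{1,5}$ argument only bounds the independence number of a neighbourhood, not the degree, and even granting degree $\le 4$ you would only get $\tr(A^2)\le 4n$, which does not recover $m_{-2}\le 4$.

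The second, larger gap is that the realization step --- which you correctly identify as carrying all the classification content, since spectra do not determine graphs --- is not actually performed. The paper does it by computer (Lemma \ref{lm:006}): it enumerates cliques in the compatibility graphs $G(\Gamma')$ built from the $573$ minimal exceptional star complements, via Theorem \ref{thm:002}. Your proposed computer-free Weyl-group enumeration inside $E_8$ is a plausible programme but is left entirely as a plan, and with your weaker bound $n\le 16$ rather than $n\le 12$ it would be substantially larger than the paper's search. Finally, your proposed consistency check is miscalibrated: the $13$ graphs of Figure~\ref{fig:EG} do \emph{not} coincide with the $13$ graphs of \cite{SR}, which classifies the \emph{non-regular} non-bipartite integral graphs with maximum degree exactly four; the present list contains, e.g., the $3$-regular Petersen graph (EG10d), so matching your output against \cite{SR} would wrongly flag correct graphs as spurious.
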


Now we recall some definitions and results. 
Let $|\triangle(\Gamma)|$ denote the number of 
triangles in a graph $\Gamma$. 

\begin{prop}[{cf. \cite[Corollary 8.1.3]{god}}]
Let $\Gamma$ be a graph. 
Then 
$\tr(A(\Gamma))^0 = |V(\Gamma)|$,
$\tr(A(\Gamma))^1 = 0$, 
$\tr(A(\Gamma))^2 = 2|E(\Gamma)|$, and 
$\tr(A(\Gamma))^3 = 6|\triangle(\Gamma)|$. 
\end{prop}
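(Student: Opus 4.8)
The plan is to interpret the entries of powers of the adjacency matrix combinatorially and then read off each trace as a count of closed walks. The fundamental fact I would use is that for any nonnegative integer $k$, the $(x,y)$-entry $(A(\Gamma)^k)_{xy}$ equals the number of walks of length $k$ from $x$ to $y$ in $\Gamma$; this follows by induction on $k$ from the definition of matrix multiplication, since the identity $(A(\Gamma)^{k})_{xy} = \sum_{z} (A(\Gamma)^{k-1})_{xz} A(\Gamma)_{zy}$ expresses the extension of a walk of length $k-1$ by one further edge. Consequently $\tr(A(\Gamma)^k) = \sum_{x \in V(\Gamma)} (A(\Gamma)^k)_{xx}$ is exactly the number of closed walks of length $k$ in $\Gamma$, and each of the four claims then amounts to counting such walks for $k = 0, 1, 2, 3$.

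With this in hand I would treat the four cases in turn. For $k=0$ we have $A(\Gamma)^0 = I$, so $\tr(A(\Gamma)^0) = |V(\Gamma)|$. For $k=1$, the diagonal entries of $A(\Gamma)$ are all zero because $\Gamma$ is simple and hence has no loops, giving $\tr(A(\Gamma)) = 0$. For $k=2$ I would write $\tr(A(\Gamma)^2) = \sum_{x,y} A(\Gamma)_{xy} A(\Gamma)_{yx} = \sum_{x,y} A(\Gamma)_{xy}^2$, and since the entries lie in $\{0,1\}$ this equals the number of ordered adjacent pairs $(x,y)$; as each edge is counted once in each of its two orders, this is $2|E(\Gamma)|$.

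The case $k=3$ is the only one requiring a small bijective argument. Here $\tr(A(\Gamma)^3) = \sum_{x,y,z} A(\Gamma)_{xy} A(\Gamma)_{yz} A(\Gamma)_{zx}$ counts the closed walks $x \to y \to z \to x$ of length $3$. Any such walk forces $x, y, z$ to be pairwise distinct and mutually adjacent, i.e. to form a triangle: a coincidence among the three vertices would produce a factor $A(\Gamma)_{ww} = 0$ and contribute nothing. Conversely, every triangle $\{x,y,z\}$ yields exactly the closed walks obtained by choosing a starting vertex ($3$ ways) and a direction of traversal ($2$ ways), hence contributes $6$ closed walks, giving $\tr(A(\Gamma)^3) = 6|\triangle(\Gamma)|$. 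I expect no genuine obstacle here; the only point that needs to be stated carefully is the distinctness of $x, y, z$ in the length-$3$ count, which is precisely what the absence of loops (and of multiple edges) in a simple graph guarantees.
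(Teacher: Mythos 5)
Your proof is correct and complete; the paper itself states this proposition without proof, citing Godsil and Royle, and your closed-walk counting argument (entries of $A(\Gamma)^k$ count walks of length $k$, then case analysis for $k=0,1,2,3$ with the $3\times 2$ count of ordered traversals per triangle) is exactly the standard argument behind that citation. No gaps to report.
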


\begin{cor}
Let $\Gamma$ be a connected integral graph with 
smallest eigenvalue at least $-2$ and 
largest eigenvalue $3$. 
Let $m_r$ denote the multiplicity of $r \in \bR$ 
as an eigenvalue of $A(\Gamma)$. 
Then the following hold:
\begin{eqnarray}
\label{eq:A-m-0}
1 + m_{2} + m_{1} + m_{0} + m_{-1} + m_{-2} &=& |V(\Gamma)|, \\
\label{eq:A-m-1}
3 + 2 m_{2} + m_{1} - m_{-1} -2 m_{-2} &=& 0, \\
\label{eq:A-m-2}
9 + 4 m_{2} + m_{1} + m_{-1} +4 m_{-2} &=& 2|E(\Gamma)|, \\
\label{eq:A-m-3}
27 + 8 m_{2} + m_{1} - m_{-1} -8 m_{-2} &=& 6|\triangle(\Gamma)|. 
\end{eqnarray}
\end{cor}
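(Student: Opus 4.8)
The plan is to read the four identities directly off the power-trace formulas of the preceding proposition, once the spectrum is pinned down sufficiently. First I would observe that the hypotheses force $\Ev(A(\Gamma))\subseteq\{-2,-1,0,1,2,3\}$: the eigenvalues are integers because $\Gamma$ is integral, they are at least $-2$ by assumption, and they are at most $3$ because $3$ is the largest eigenvalue. Consequently the only multiplicities that can be nonzero are $m_3,m_2,m_1,m_0,m_{-1},m_{-2}$, and every eigenvalue equals one of the six integers $r\in\{-2,-1,0,1,2,3\}$.

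Next I would establish that $m_3=1$. Since $\Gamma$ is connected, $A(\Gamma)$ is an irreducible nonnegative matrix; because its smallest eigenvalue is at least $-2$ and its largest is $3$, its spectral radius is $\rho(A(\Gamma))=3$. The Perron--Frobenius Theorem (Theorem~\ref{perfro}) then gives that $3$ is an eigenvalue of algebraic multiplicity one, i.e.\ $m_3=1$.

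Finally I would substitute these multiplicities into the trace identities. For $i=1,2,3$ one has $\tr(A(\Gamma)^{i})=\sum_{\theta}\theta^{i}m_{\theta}=3^{i}+2^{i}m_2+m_1+(-1)^{i}m_{-1}+(-2)^{i}m_{-2}$, where the term for $\theta=0$ vanishes; for $i=0$ the trace of the identity simply counts all eigenvalues with multiplicity, giving $|V(\Gamma)|=1+m_2+m_1+m_0+m_{-1}+m_{-2}$. Plugging in $\tr(A(\Gamma)^{1})=0$, $\tr(A(\Gamma)^{2})=2|E(\Gamma)|$, and $\tr(A(\Gamma)^{3})=6|\triangle(\Gamma)|$ and collecting terms yields exactly Equations (\ref{eq:A-m-0})--(\ref{eq:A-m-3}). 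I expect no genuine obstacle here: the sole non-bookkeeping ingredient is the Perron--Frobenius fact $m_3=1$, and everything else is the elementary identity that the $i$-th power trace of a symmetric matrix is the multiplicity-weighted sum of the $i$-th powers of its eigenvalues.
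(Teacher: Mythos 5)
Your proof is correct and is exactly the argument the paper intends: the paper treats this as an immediate corollary of the preceding trace proposition, with the spectrum confined to $\{-2,-1,0,1,2,3\}$ by integrality and the eigenvalue bounds, and $m_3=1$ from Perron--Frobenius via connectedness (the same reasoning the authors spell out for the analogous Proposition on $\cQ(H,f)$, where $m_5=1$). Nothing is missing.
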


\begin{df}\label{df:001}
Let $\Gamma$ be a graph with $V(\Gamma)=\{1,\ldots,n\}$. 
Let $P$ be the orthognal projection of ${\mathbb R}^n$ 
	onto ${\mathscr E}(\mu)$,
	where ${\mathscr E}(\mu)$ is the eigenspace of $A(\Gamma)$
	for the eigenvalue $\mu$ of $A(\Gamma)$.
Then a subset $X$ of $V(\Gamma)$ satisfying the following condition
	is called a star set for $\mu$ of $\Gamma$:
\begin{equation}\label{eq:001}
\text{the vectors $P{\bf e}_j$ ($j\in X$) form
	a basis for ${\mathscr E}(\mu)$,}
\end{equation}
where $\{{\bf e}_1,\ldots,{\bf e}_n\}$
	is the standard basis of ${\mathbb R}^n$.
\end{df}

\begin{df}\label{df:002}
Let $\Gamma$ be a graph with $V(\Gamma)=\{1,\ldots,n\}$
	and an eigenvalue $\mu$.
Let $X$ be a star set for $\mu$ of $\Gamma$.
Then the subgraph $\Gamma-X$ of $\Gamma$ is called
	the star complement for $\mu$
	corresponding to $X$.
\end{df}

Let $\Gamma$ be a graph with adjacency matrix
$\begin{pmatrix}
A_X & B^T\\
B&C
\end{pmatrix}$,
where $X$ is a star set for an eigenvalue $\mu$ of $\Gamma$.
Then we define a bilinear form on ${\mathbb R}^{n-|X|}$
	by $\subg{{\mathbf x},{\mathbf y}}{X}
	={\mathbf x}^T(\mu I-C)^{-1}{\mathbf y}$,
	and denote the columns of $B$
	by ${\mathbf b}_{v}$ ($v\in X$).

\begin{thm}[\cite{tech2}]\label{thm:002}
Suppose that $\mu$ is not an eigenvalue of the graph $\Gamma'$.
Then there exists a graph $\Gamma$
	with a star set $X$ for $\mu$
	such that $\Gamma -X=\Gamma'$
	if and only if
	the characteristic vectors ${\mathbf b}_v$ ($v\in X$)
	satisfy
\begin{enumerate}[(i)]
\item $\subg{{\mathbf b}_v,{\mathbf b}_v}{X}=\mu$ for all $v\in X$,
\item $\subg{{\mathbf b}_u,{\mathbf b}_v}{X}\in\{-1,0\}$
	for all pairs $u,v$ of distinct vertices in $X$.
\end{enumerate}
\end{thm}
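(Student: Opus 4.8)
The engine for both directions is a single \emph{reconstruction} identity, so I would establish that first and then simply read off entries. Split a $\mu$-eigenvector of $\Gamma$ as ${\mathbf u}=({\mathbf u}_X,{\mathbf u}_{\bar X})$ along $X$ and $V(\Gamma)\setminus X$; the equation $A(\Gamma){\mathbf u}=\mu{\mathbf u}$ becomes the two block relations $A_X{\mathbf u}_X+B^T{\mathbf u}_{\bar X}=\mu{\mathbf u}_X$ and $B{\mathbf u}_X+C{\mathbf u}_{\bar X}=\mu{\mathbf u}_{\bar X}$. Because $\mu\notin\Ev(\Gamma')$ the matrix $\mu I-C$ is invertible, so the second relation forces ${\mathbf u}_{\bar X}=(\mu I-C)^{-1}B{\mathbf u}_X$. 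Hence the restriction map ${\mathbf u}\mapsto{\mathbf u}_X$ is injective on ${\mathscr E}(\mu)$, and ${\mathbf u}_X$ already determines ${\mathbf u}$.

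Next I would connect Definition~\ref{df:001} to this restriction map. For ${\mathbf u}\in{\mathscr E}(\mu)$ one has $u_j=\langle{\mathbf u},{\mathbf e}_j\rangle=\langle P{\mathbf u},{\mathbf e}_j\rangle=\langle{\mathbf u},P{\mathbf e}_j\rangle$ for every $j$, since $P{\mathbf u}={\mathbf u}$ and $P$ is self-adjoint. Thus the $X$-coordinates of ${\mathbf u}$ are its inner products against the vectors $P{\mathbf e}_j$ ($j\in X$), and these vectors form a basis of ${\mathscr E}(\mu)$ exactly when the restriction map ${\mathbf u}\mapsto{\mathbf u}_X$ is a bijection ${\mathscr E}(\mu)\to{\mathbb R}^X$. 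Given the injectivity above, $X$ being a star set is therefore equivalent to surjectivity, i.e.\ to every ${\mathbf u}_X\in{\mathbb R}^X$ extending to a genuine $\mu$-eigenvector. Substituting ${\mathbf u}_{\bar X}=(\mu I-C)^{-1}B{\mathbf u}_X$ into the first block relation, this holds for all ${\mathbf u}_X$ if and only if
\[
A_X=\mu I-B^T(\mu I-C)^{-1}B,
\]
the reconstruction identity.

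Both implications now reduce to inspecting entries. Writing the columns of $B$ as ${\mathbf b}_v$ and recalling $\subg{{\mathbf b}_u,{\mathbf b}_v}{X}={\mathbf b}_u^T(\mu I-C)^{-1}{\mathbf b}_v$, the $(u,v)$-entry of the identity is $(A_X)_{uv}=\mu\,\delta_{uv}-\subg{{\mathbf b}_u,{\mathbf b}_v}{X}$. For the forward direction, if a suitable $\Gamma$ exists then $A_X$ is a genuine adjacency matrix: its zero diagonal gives $\subg{{\mathbf b}_v,{\mathbf b}_v}{X}=\mu$, which is (i), and its off-diagonal entries lying in $\{0,1\}$ give $\subg{{\mathbf b}_u,{\mathbf b}_v}{X}\in\{-1,0\}$, which is (ii). For the converse, starting from $\{0,1\}$-vectors ${\mathbf b}_v$ obeying (i) and (ii), I would \emph{define} $A_X$ by the displayed formula and verify it is a legitimate adjacency matrix: it is symmetric since $(\mu I-C)^{-1}$ is symmetric, (i) kills its diagonal, and (ii) forces its off-diagonal entries to be $0$ or $1$. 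The graph $\Gamma$ assembled from $A_X$, $B$, $C$ then has $\Gamma-X=\Gamma'$ by construction, and since $A_X$ satisfies the reconstruction identity, $X$ is a star set for $\mu$.

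The step I expect to carry the real weight is the middle one: proving that ``the $P{\mathbf e}_j$ form a basis of ${\mathscr E}(\mu)$'' is equivalent to the restriction map ${\mathscr E}(\mu)\to{\mathbb R}^X$ being bijective, and hence to the reconstruction identity. The rest is linear-algebra bookkeeping, but here one must take care that the self-adjointness of $P$ and the invertibility of $\mu I-C$ interact correctly---specifically that injectivity of the restriction map (guaranteed by $\mu\notin\Ev(\Gamma')$) upgrades to bijectivity precisely under the dimension count $\dim{\mathscr E}(\mu)=|X|$ that the basis condition encodes.
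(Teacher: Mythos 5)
The paper offers no proof of this theorem—it is quoted verbatim from \cite{tech2}—so there is no internal argument to compare against; judged on its own, your proposal is a correct and complete proof, and it is essentially the standard proof of the reconstruction theorem underlying the cited result. In particular, the pivotal middle step is handled correctly: using self-adjointness of $P$ and $P{\mathbf u}={\mathbf u}$ on ${\mathscr E}(\mu)$, the basis condition of Definition~\ref{df:001} is equivalent to bijectivity of the restriction map ${\mathscr E}(\mu)\to{\mathbb R}^X$ (injectivity coming free from $\mu\notin\Ev(\Gamma')$ via ${\mathbf u}_{\bar X}=(\mu I-C)^{-1}B{\mathbf u}_X$), which is equivalent to the identity $\mu I-A_X=B^T(\mu I-C)^{-1}B$, from which conditions (i) and (ii) are read off entrywise in both directions exactly as in the Cvetkovi\'c--Rowlinson--Simi\'c treatment.
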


If $\Gamma$ has $\Gamma'$ as a star complement for $\mu$
	with corresponding star set $X$,
	then each induced subgraph $\Gamma -Y$ ($Y\subset X$)
	also has $\Gamma'$ as a star complement for $\mu$.

By the star complement technique (see, for example, \cite{tech2}),
	we determine all connected exceptional graphs $\Gamma$
	satisfying 
$3 \in \Ev(A(\Gamma)) \subseteq \{-2,-1,0,1,2,3 \}$. 	

By $G(\Gamma')$,
	we define the graph satisfying the following conditions:
\begin{enumerate}[(i)]
\item the vertices are the $(0,1)$-vectors $\mathbf b$ in ${\mathbb R}^{t}$
	such that $\subg{{\mathbf b},{\mathbf b}}{\Gamma'}=-2$,
	where $t=|V(\Gamma')|$,
\item ${\mathbf b}_1$ is adjacent to ${\mathbf b}_2$
	if and only if
	$\subg{{\mathbf b}_1,{\mathbf b}_2}{\Gamma'}\in\{-1,0\}$.
\end{enumerate}
A graph $\Gamma$ with a star set $X$ for $-2$
	such that $\Gamma -X=\Gamma'$ now corresponds to a clique
	in $G(\Gamma')$.
There exist $573$ graphs such that they are connected exceptional
	and have the smallest eigenvalues greater than $-2$
	(see \cite{new}).
There are $20$ such graphs on $6$ vertices, 
	$110$ on $7$ vertices
	and $443$ on $8$ vertices. 

Since the connected exceptional graphs
	with smallest eigenvalue $-2$ have subgraphs
	isomorphic to one of such graphs as a star complement for $-2$,
	we can obtain the complete list of exceptional graphs 
satisfying 
$3 \in \Ev(A(\Gamma)) \subseteq \{-2,-1,0,1,2,3 \}$  
from $573$ such graphs.
By comupter, we obtain the following lemma:
	
\begin{lem}\label{lm:006}
Let $\Gamma$ be a connected exceptional graph satisfying 
$3 \in \Ev(A(\Gamma)) \subseteq \{-2,-1,0,1,2,3 \}$. 
If $|V(\Gamma)|\le 12$,
	then $\Gamma$ is isomorphic to one of the graphs
	in Figure~\ref{fig:EG}.
\end{lem}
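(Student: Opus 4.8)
The plan is to prove this lemma exactly along the lines the preceding setup prepares, namely by the star complement technique for the eigenvalue $-2$, carried out over the known list of exceptional star complements. First I would dispose of the case in which the least eigenvalue of $\Gamma$ exceeds $-2$. Since the spectrum is integral and contained in $\{-2,-1,0,1,2,3\}$, this would force the least eigenvalue to be at least $-1$; but a connected graph with least eigenvalue $\geq -1$ is complete, and the only complete graph with spectral radius $3$ is $K_4 = \mathscr{L}(K_{1,4})$, which is a generalized line graph and hence \emph{not} exceptional. Thus every $\Gamma$ under consideration has $-2 \in \Ev(A(\Gamma))$, and in fact $\min \Ev(A(\Gamma)) = -2$.

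Next I would invoke the structural input from the classification of graphs with least eigenvalue $-2$ (the reference \cite{new} cited above): a connected exceptional graph with $-2$ as an eigenvalue admits a connected exceptional star complement $\Gamma'$ for $-2$, and every such $\Gamma'$ is one of the $573$ connected exceptional graphs with least eigenvalue greater than $-2$, i.e.\ one of the graphs on $6$, $7$, or $8$ vertices. Writing $X$ for the associated star set and $t = |V(\Gamma')|$, we have $|V(\Gamma)| = t + |X|$ with $|X|$ equal to the multiplicity $m_{-2}$ of $-2$. The hypothesis $|V(\Gamma)| \leq 12$ then restricts the search to $|X| \leq 12 - t \leq 6$, which is what makes the enumeration finite and manageable.

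For each of the $573$ candidate star complements $\Gamma'$ I would build the compatibility graph $G(\Gamma')$ as defined above: its vertices are the $(0,1)$-vectors $\mathbf{b}$ with $\subg{\mathbf{b},\mathbf{b}}{\Gamma'} = -2$ (condition (i) of Theorem~\ref{thm:002} at $\mu = -2$), adjacent precisely when $\subg{\mathbf{b},\mathbf{b}'}{\Gamma'} \in \{-1,0\}$ (condition (ii)). By Theorem~\ref{thm:002} the graphs $\Gamma$ having $\Gamma'$ as a star complement for $-2$ correspond bijectively to the cliques of $G(\Gamma')$, a clique of size $s$ producing a graph on $t+s$ vertices. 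I would enumerate all cliques of size at most $12 - t$, reconstruct each $\Gamma$ from the adjacency matrix $\begin{pmatrix} A_X & B^T \\ B & C \end{pmatrix}$ (with $C = A(\Gamma')$ and the columns of $B$ the chosen vectors), and keep those $\Gamma$ that are connected, have spectral radius exactly $3$ with $\Ev(A(\Gamma)) \subseteq \{-2,-1,0,1,2,3\}$, and are genuinely exceptional. I would then reduce the survivors up to isomorphism and match them against the $13$ graphs of Figure~\ref{fig:EG}, cross-checking the multiplicity data recorded in Table~\ref{t:001}.

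The hard part is not conceptual but computational: the graphs $G(\Gamma')$ can be large and rich in cliques, so running the enumeration over all $573$ star complements is feasible only with machine assistance, exploiting the size cut-off $|X| \leq 12 - t$ and aggressive isomorphism rejection (the same graph $\Gamma$ will typically arise from several distinct star complements, so duplicates must be discarded efficiently). The single genuinely non-computational ingredient is the reduction of the second step—that a connected exceptional $\Gamma$ with $-2 \in \Ev(A(\Gamma))$ may be assumed to contain one of the listed graphs as a star complement for $-2$—which rests on the structure theory of graphs with least eigenvalue $-2$ rather than on the search itself.
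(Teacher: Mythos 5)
Your proposal is correct and follows essentially the same route as the paper: the authors likewise reduce to the star complement technique for $\mu=-2$ over the $573$ connected exceptional graphs with least eigenvalue greater than $-2$ (on $6$, $7$, or $8$ vertices), realize the extensions as cliques of $G(\Gamma')$ via Theorem~\ref{thm:002}, and complete the bounded enumeration by computer. Your preliminary step ruling out $\min \Ev(A(\Gamma)) > -2$ (integrality forces $\lambda_{\min} \geq -1$, hence $K_4$, which is a line graph and not exceptional) is left implicit in the paper but is fully consistent with its argument.
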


In the following, we show that 
any connected exceptional graph satisfying 
$3 \in \Ev(A(\Gamma)) \subseteq \{-2,-1,0,1,2,3 \}$. 
has at most $12$ vertices.

\begin{lem}\label{lm:003}
Let $\Gamma$ be a connected exceptional graph satisfying 
$3 \in \Ev(A(\Gamma)) \subseteq \{-2,-1,0,1,2,3 \}$. 
If $|\triangle(\Gamma)|=0$, 
then $\Gamma$ is the Petersen graph. 
In particular, $m_{-2}=4$. 
\end{lem}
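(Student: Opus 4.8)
The plan is to show that $\Gamma$ must be $3$-regular of girth $5$ on $10$ vertices, which forces it to be the Petersen graph. Throughout write $A:=A(\Gamma)$ and let $m_r$ denote the multiplicity of $r$ as an eigenvalue of $A$; since $\Gamma$ is connected, the Perron--Frobenius Theorem (Theorem~\ref{perfro}) gives $m_3=1$. First I would record the arithmetic forced by $|\triangle(\Gamma)|=0$: subtracting (\ref{eq:A-m-1}) from (\ref{eq:A-m-3}) gives $24+6m_2-6m_{-2}=0$, i.e. $m_{-2}=m_2+4$, and then (\ref{eq:A-m-1}) yields $m_{-1}=m_1-5$. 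In particular $m_{-2}\ge 4$, so $-2$ is genuinely an eigenvalue and $A+2I$ is positive semidefinite. Hence I can write $A+2I=B^{T}B$ and denote by $b_x$ the column of $B$ indexed by $x\in V(\Gamma)$, so that $\langle b_x,b_x\rangle=2$ and $\langle b_x,b_y\rangle=A_{xy}\in\{0,1\}$ for $x\ne y$.

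The next step is to bound degrees using this Gram representation. If $x$ has neighbours $y_1,\dots,y_d$, then, $\Gamma$ being triangle-free, these are pairwise non-adjacent, so the $b_{y_i}$ are pairwise orthogonal with $\langle b_x,b_{y_i}\rangle=1$. Putting $w=b_x-\tfrac12\sum_{i}b_{y_i}$ I compute $\|w\|^2=2-\tfrac{d}{2}\ge 0$, whence $d\le 4$; moreover $d=4$ forces $w=0$, i.e. $b_x=\tfrac12\sum_i b_{y_i}$. In that case any $z\notin\{x,y_1,\dots,y_4\}$ satisfies $A_{zx}=\langle b_z,b_x\rangle=\tfrac12\sum_i A_{zy_i}$, and $A_{zx}=0$ then forces $z$ non-adjacent to every $y_i$; connectedness gives $\Gamma=K_{1,4}$, whose spectral radius is $2$, a contradiction. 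Thus the maximum degree is at most $3$, and since the spectral radius equals $3$ with $\Gamma$ connected, $\theta_{\max}(A)=3=\Delta$ forces $\Gamma$ to be $3$-regular by Perron--Frobenius.

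With $3$-regularity in hand I would exclude $4$-cycles. If $a,b,c,d$ induce a $C_4$ (automatically induced, as $\Gamma$ is triangle-free), the same bookkeeping gives $\|b_a+b_c\|^2=\|b_b+b_d\|^2=\langle b_a+b_c,\,b_b+b_d\rangle=4$, so equality in Cauchy--Schwarz yields $b_a+b_c=b_b+b_d$. Testing this against the third neighbour $e$ of $a$ (which lies outside $\{a,b,c,d\}$) gives $1+A_{ec}=A_{eb}+A_{ed}=0$, since $e\not\sim b,d$ by triangle-freeness, a contradiction. Hence $\Gamma$ has girth at least $5$, so any two vertices share at most one common neighbour and $C:=A^2-3I$ is the $\{0,1\}$ adjacency matrix of the (necessarily $6$-regular) distance-$2$ graph of $\Gamma$. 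Its eigenvalues are $\theta^2-3$ for $\theta\in\Ev(A)$, namely $6$ with multiplicity $1$, $1$ with multiplicity $m_2+m_{-2}$, $-2$ with multiplicity $m_1+m_{-1}$, and $-3$ with multiplicity $m_0$. Comparing $\tr(C)=0$ and $\tr(C^2)=6n$ with these multiplicities (and substituting $m_{-2}=m_2+4$, $m_{-1}=m_1-5$, $n=2m_2+2m_1+m_0$) gives the two relations $m_1+m_2=5$ and $m_0=2m_2$, whence $m_{-1}=-m_2$; as $m_{-1}\ge 0$ this forces $m_2=0$. Therefore the spectrum is $\{3^{1},1^{5},(-2)^{4}\}$, $n=10$ and $m_{-2}=4$, and a connected $3$-regular graph of girth at least $5$ on $10$ vertices is the Petersen graph.

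The trace manipulations are routine; the two places demanding the right idea — and where I expect the main obstacle — are (i) upgrading the Gram representation into the exact exclusion of degree-$4$ vertices, namely recognising that equality in $\|w\|^2\ge 0$ pins $b_x=\tfrac12\sum_i b_{y_i}$ and that this identity propagates to force $\Gamma=K_{1,4}$; and (ii) observing that once girth $5$ is available, $C=A^2-3I$ is a genuine $6$-regular $\{0,1\}$-matrix, so that its first two power-traces alone collapse every remaining multiplicity. It is worth noting that this argument uses only connectedness, triangle-freeness, $\theta_{\min}\ge -2$, and the integral spectrum in $\{-2,\dots,3\}$; the hypothesis that $\Gamma$ is exceptional is not actually needed.
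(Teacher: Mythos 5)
Your proof is correct, and it shares the paper's skeleton --- bound the maximum degree by $3$, deduce $3$-regularity from Perron--Frobenius, exclude quadrangles, count multiplicities to force the spectrum $\{3^1,1^5,(-2)^4\}$ and $n=10$, then invoke uniqueness of the connected $3$-regular girth-$5$ graph on $10$ vertices --- but the tools at the two decisive steps are genuinely different. Where the paper rules out an induced $K_{1,4}$ and an induced quadrangle by Perron--Frobenius and interlacing applied to induced bipartite subgraphs (a connected bipartite induced subgraph properly containing $K_{1,4}$ or $C_4$ has spectral radius greater than $2$, hence smallest eigenvalue less than $-2$), you exploit the Gram representation $A+2I=B^TB$ afforded by $\theta_{\min}\ge -2$: the degree bound comes from $\bigl\|b_x-\tfrac12\sum_i b_{y_i}\bigr\|^2 = 2-\tfrac{d}{2}\ge 0$, and girth at least $5$ from the equality case of Cauchy--Schwarz on a $C_4$, with the equality cases ($d=4$ forcing $\Gamma=K_{1,4}$; $b_a+b_c=b_b+b_d$ contradicted by the third neighbour of $a$) handled by explicit vector identities rather than subgraph case analysis. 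More importantly, your counting step is more complete than the paper's: the paper asserts the multiplicities follow ``by solving Equations (\ref{eq:A-m-0})--(\ref{eq:A-m-3}) with $2|E(\Gamma)|=3|V(\Gamma)|$ and $|\triangle(\Gamma)|=0$'', but that linear system alone is underdetermined --- for instance $m_2=2$, $m_1=6$, $m_0=0$, $m_{-1}=1$, $m_{-2}=6$, $n=16$ satisfies all four equations together with $2|E(\Gamma)|=3n$ --- so quadrangle-freeness must also enter quantitatively, e.g.\ via $\tr(A^4)=15n$. Your identity $\tr(C^2)=6n$ for the distance-$2$ graph $C=A^2-3I$ is exactly this fourth-power trace in disguise (indeed $\tr(C^2)=\tr(A^4)-6\tr(A^2)+9n$), so your write-up makes explicit a step the paper leaves implicit. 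Your closing observation is also accurate: like the paper's argument, yours uses only connectedness, triangle-freeness, and $3\in\Ev(A(\Gamma))\subseteq\{-2,\dots,3\}$, so the hypothesis that $\Gamma$ is exceptional serves only to guarantee $\theta_{\min}\ge -2$, which the spectrum containment already does.
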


\begin{proof}
If $\Gamma$ contains an induced $K_{1,4}$, 
then, by Perron-Frobenius Theorem, 
$\Gamma$ can not contain an induced bipartite subgraph 
containing $K_{1,4}$ and therefore $\Gamma$ is $K_{1,4}$, 
but this is impossible as it is bipartite and 
hence spectral radius is not three. 
This means that $\Gamma$ has maximum degree at most three 
and hence as it has spectral radius three, it must be three-regular. 
So $2|E(\Gamma)|=3|V(\Gamma)|$. 
If $\Gamma$ contains an induced quadrangle, 
then again, by Perron-Frobenius Theorem, 
$\Gamma$ must be this quadrangle, a contradiction. 
By solving Equations (\ref{eq:A-m-0})-(\ref{eq:A-m-3}) with 
$2|E(\Gamma)|=3|V(\Gamma)|$ and $|\triangle(\Gamma)|=0$, 
we have 
$m_{-1} = m_0 = m_2 = 0$, $m_1 = 5$, $m_{-2} = 4$ and $n=10$. 
Thus it follows that $\Gamma$ is the Petersen graph. 
\end{proof}

\begin{lem}\label{lm:005}
Let $\Gamma$ be a connected exceptional graph satisfying 
$3 \in \Ev(A(\Gamma)) \subseteq \{-2,-1,0,1,2,3 \}$. 
Then $|V(\Gamma)| \leq 12$. 
\end{lem}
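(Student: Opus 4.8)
The plan is to turn the moment identities (\ref{eq:A-m-0})--(\ref{eq:A-m-3}) into a bound on $m_{-2}$ and then invoke the star-complement machinery set up above. By Lemma \ref{lm:003} I may assume $|\triangle(\Gamma)| \ge 1$, since a triangle-free such $\Gamma$ is the Petersen graph on ten vertices. The single clean numerical fact I would extract first is the exact triangle count: eliminating $m_{-1}$ between (\ref{eq:A-m-1}) and (\ref{eq:A-m-3}) yields
\[
6\,|\triangle(\Gamma)| = 24 + 6 m_2 - 6 m_{-2}, \qquad\text{hence}\qquad m_{-2} = 4 + m_2 - |\triangle(\Gamma)|.
\]
Next I would record the crucial order bound coming from the exceptional structure: since $\Gamma$ is exceptional, $A(\Gamma)+2I$ is the Gram matrix of norm-$2$ vectors in the root system $E_8$, so $\mathrm{rank}(A(\Gamma)+2I) = |V(\Gamma)| - m_{-2} \le 8$. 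Equivalently, the star complement $\Gamma'$ for $-2$ satisfies $\theta_{\min}(\Gamma') > -2$ (by interlacing, Theorem \ref{0}, and $-2\notin\Ev(\Gamma')$) and hence has order $t = |V(\Gamma)| - m_{-2} \le 8$. Therefore $|V(\Gamma)| \le m_{-2} + 8$, and the whole lemma reduces to the single inequality $m_{-2} \le 4$, i.e. $|\triangle(\Gamma)| \ge m_2$.

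Arguing by contradiction, I would suppose $|V(\Gamma)| \ge 13$, so that $m_{-2} \ge 5$. The triangle identity then gives $|\triangle(\Gamma)| = 4 + m_2 - m_{-2} \le m_2 - 1$, and since $|\triangle(\Gamma)| \ge 1$ this forces $m_2 \ge 2$. On the other hand the rank identity $1 + m_2 + m_1 + m_0 + m_{-1} = |V(\Gamma)| - m_{-2} = t \le 8$ bounds $m_2 + m_1 + m_0 + m_{-1} \le 7$. These constraints, together with $m_{-2} = 4 + m_2 - |\triangle(\Gamma)| \ge 5$, leave only a short finite list of admissible vectors $(m_2,m_1,m_0,m_{-1},m_{-2})$, all with $m_2 \ge 2$ and $m_{-2} \ge 5$. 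I would also carry along the cheap local restrictions from interlacing: $\Gamma$ has no induced $K_{1,5}$ (whose least eigenvalue is $-\sqrt5 < -2$), so every neighbourhood has independence number at most $4$, and $\Gamma$ has no induced $K_5$ (whose largest eigenvalue is $4 > 3$).

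The hard part will be eliminating these surviving parameter vectors, because the moment equations alone are perfectly consistent with, say, $(m_2,m_{-2}) = (2,5)$ at $|V(\Gamma)| = 13$; ruling them out needs genuine control of the $(-2)$-eigenspace rather than more counting. Here I would exploit that the representing roots of $A(\Gamma)+2I$ in $E_8$ have pairwise inner products in $\{0,1\}$, so $m_{-2} \ge 5$ means they satisfy five independent linear relations inside an at most $8$-dimensional space; equivalently, by Theorem \ref{thm:002} a star set of size $m_{-2}$ for $-2$ is a clique of that size in the associated graph $G(\Gamma')$. I would try to show that five such relations (equivalently a $5$-clique in $G(\Gamma')$) always forces a forbidden induced configuration — for instance two vertex-disjoint induced cycles, or a neighbourhood violating the independence bound above — which pushes either $\theta_{\min}$ below $-2$ or $\theta_{\max}$ above $3$, contradicting $\Ev(A(\Gamma)) \subseteq \{-2,-1,0,1,2,3\}$. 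I expect this to be the real obstacle: the bound $|V(\Gamma)| \le m_{-2} + 8$ and the reduction to $m_{-2}\le 4$ are routine, whereas closing the gap $m_{-2}\le 4$ is exactly where the structure of the $(-2)$-eigenspace must be used, and is the step that finally brings the count into the range $|V(\Gamma)| \le 12$ handled by Lemma \ref{lm:006}.
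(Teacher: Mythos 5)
Your reduction is sound and matches the paper's skeleton: the identity $m_{-2} = 4 + m_2 - |\triangle(\Gamma)|$ from Equations (\ref{eq:A-m-1}) and (\ref{eq:A-m-3}), the bound $|V(\Gamma)| \le m_{-2} + 8$ (the paper gets it from an exceptional star complement for $-2$ having $6$, $7$ or $8$ vertices, you from $\mathrm{rank}(A(\Gamma)+2I)\le 8$ via the $E_8$ representation --- equivalent), and the reduction of the lemma to $m_{-2}\le 4$. But at exactly that point your proof stops being a proof: the elimination of $m_{-2}\ge 5$ via $5$-cliques in $G(\Gamma')$ forcing forbidden induced configurations is an unexecuted program, with no argument that such a clique actually produces two disjoint induced cycles or a bad neighbourhood. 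As written, the key inequality $m_{-2}\le 4$ is asserted, not established, so the lemma is not proved.

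Moreover, your diagnosis that ``ruling them out needs genuine control of the $(-2)$-eigenspace rather than more counting'' is incorrect, and this is where you diverge from the paper. You overlooked one cheap spectral inequality: the spectral radius dominates the average degree, so $2|E(\Gamma)| \le 3|V(\Gamma)|$. Feeding this into Equation (\ref{eq:A-m-2}) together with $|V(\Gamma)|\le 8+m_{-2}$ gives
\[
9 + 4m_2 + m_1 + m_{-1} + 4m_{-2} \;=\; 2|E(\Gamma)| \;\le\; 3\bigl(8+m_{-2}\bigr),
\qquad\text{i.e.}\qquad
4m_2 + m_1 + m_{-1} + m_{-2} \le 15.
\]
Substituting $m_2 = m_{-2} + |\triangle(\Gamma)| - 4$ and $m_{-1} = m_1 + 2|\triangle(\Gamma)| - 5$ yields $5m_{-2} \le 36 - 6|\triangle(\Gamma)| - 2m_1$; if $|\triangle(\Gamma)| = 1$ then $m_1 = m_{-1}+3 \ge 3$, and if $|\triangle(\Gamma)|\ge 2$ no extra information is needed --- in both cases $m_{-2} \le \frac{24}{5}$, hence $m_{-2}\le 4$ (the triangle-free case being the Petersen graph by Lemma \ref{lm:003}). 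Your own test case confirms the point: $(m_2,m_{-2})=(2,5)$ at $|V(\Gamma)|=13$ forces $|\triangle(\Gamma)|=1$, $m_1\ge 3$, and then $2|E(\Gamma)| \ge 40 > 39 = 3|V(\Gamma)|$, i.e.\ average degree above $3$, a contradiction. So the case you flagged as needing root-system structure dies by counting alone, and the hard analysis you proposed is unnecessary.
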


\begin{proof}
First, we show that $\sum_{i=-1}^3 m_i \leq 8$.
There exists a star set for $-2$ of $\Gamma$
	such that $\Gamma -X$ is exceptional,
	i.e., $\theta_{\min}>-2$.
Then $|V(\Gamma)|-|X|=6,7$ or $8$ (see \cite{new}).
Therefore $\sum_{i=-1}^3 m_i = \sum_{i=-2}^3 m_i - m_{-2} 
=|V(\Gamma)|-|X|\le 8$.
Hence $\sum_{i=-1}^3 m_i \leq 8$.

Second, we show that $m_{-2} \le 4$. 
If $|\triangle(\Gamma)|=0$, 
then $m_{-2}=4$ by Lemma \ref{lm:003}. 
So we assume that $|\triangle(\Gamma)| \geq 1$.  
First, we show that 
$4m_2 + m_{-2} + m_1 + m_{-1} \le 15$. 
It is well-known that 
$\theta_{\max}(A(\Gamma)) \geq 
\frac{1}{|V(\Gamma)|} \sum_{v\in V(\Gamma)} \deg_{\Gamma}(v)$. 
By Equation (\ref{eq:A-m-2}) 
and $2|E(\Gamma)|=\sum_{v\in V(\Gamma)}\deg_{\Gamma} (v)$, 
we have $9+\sum_{i=-2}^2 i^2 m_i \le 3|V(\Gamma)|$.
By Equation (\ref{eq:A-m-0}) and $\sum_{i=-1}^3 m_i \leq 8$, 
we have $|V(\Gamma)| \leq 8+m_{-2}$. 
Therefore, $9 + \sum_{i=-2}^2 i^2 m_i \le 3(8+m_{-2})$, that is, 
$\sum_{i=-1}^{2} i^2 m_i + 4 m_{-2} \le 15 + 3m_{-2}$.  
Hence we have 
$4m_2 + m_1 + m_{-1} + m_{-2}  \le 15$.

By calculating [Equation (\ref{eq:A-m-3})] $-$ [Equation (\ref{eq:A-m-1})], 
we obtain $6m_2 - 6m_{-2}+24=6|\triangle(\Gamma)|$, that is, 
$m_{2} = m_{-2} + |\triangle(\Gamma)| - 4$.
By calculating 
 [Equation (\ref{eq:A-m-3})] $-$ $4 \times$ [Equation (\ref{eq:A-m-1})], 
we obtain $- 6m_1 + 6m_{-1}+ 15=6|\triangle(\Gamma)|$, that is, 
$m_{-1} =m_{1} + 2|\triangle(\Gamma)| - 5$.
Thus we have  
$m_{-2} \leq \frac{1}{5}(36 - 6 |\triangle(\Gamma)| - 2m_1)$.
If $|\triangle(\Gamma)|=1$,
	then $m_1 = m_{-1} + 5-2 \ge 3$ 
	and thus $m_{-2} \le \frac{24}{5}$. 
If $|\triangle(\Gamma)| \ge 2$,
	then 
	$m_{-2} \le \frac{24}{5}$.
Hence $m_{-2} \le 4$.

By Equation (\ref{eq:A-m-0}), 
$|V(\Gamma)|=\sum_{i=-1}^3m_i + m_{-2} \le 8+4=12$. 
Hence the lemma holds.
\end{proof}

\begin{proof}[Proof of Theorem \ref{thm:EG001}]
It follows from Lemmas~\ref{lm:006} and \ref{lm:005}.
\end{proof}

\section{Concluding Remarks}

In this paper, we classified the connected non-bipartite integral graphs 
with spectral radius three. 
They have at most $13$ vertices. 
A natural question is given the set of eigenvalues of a connected graph 
what one can say about the number of vertices, 
the degree sequence etcetera. 
A bound on the number of vertices given the diameter and spectral radius 
is given in \cite{CDKL}. 
Although it is believed that this bound is asymptotically good, 
for small spectral radius, it is not a good bound. 

\begin{description}
\item
{\bf Challenge 1.}  
Classify the connected integral bipartite graphs.
\end{description}

\noindent
Brouwer and Haemers \cite{BHtree} classified the integral trees 
with spectral radius three, 
and K.~Bali{\'n}ska {\it et al.} did some work 
on the bipartite non-regular integral graphs with maximum degree four 
\cite{BS}, \cite{BSZ}. 
It seems that the general case is not doable without 
a better bound on the number of vertices. 
Probably the methods in this paper can be extended 
to find all integral graph with spectral radius four 
and smallest eigenvalue $-2$. 

\begin{description}
\item
{\bf Challenge 2.} 
Classify the integral graphs with spectral radius four 
and smallest eigenvalue $-2$. 
\end{description}

\noindent
Some work towards this challenge has been done by \cite{SS}.

\section*{Acknowledgements}

This work is partially based on the master thesis of the first author. 
This work was made possible partially by 
a NRF-JSPS grant (F01-2009-000-10113-0). 
The second author was supported 
by the Basic Science Research Program through 
the National Research Foundation of Korea (NRF) 
funded by the Ministry of Education, Science and Technology 
(2010-0008138). 
The third author was supported 
by Priority Research Centers Program through 
the National Research Foundation of Korea (NRF) 
funded by the Ministry of Education, Science and Technology 
(2010-0029638). 
All the supports were greatly appreciated.


\end{document}